\documentclass[12pt]{extarticle}
\usepackage{amssymb,amsfonts,amsthm,amsmath,bbold}
\usepackage{mathrsfs,pifont}
\usepackage{mathabx}
\usepackage[all]{xy}
\usepackage{array}
\usepackage{authblk}
 \usepackage{mathpple}
\usepackage{amssymb}
\usepackage{bm}
\usepackage{ytableau}
\usepackage{amsmath}
\usepackage{amsmath,amsfonts, amscd,amsthm, graphicx}
\usepackage{comment}
\usepackage[OT1]{fontenc} 
\usepackage[all]{xy}
\usepackage{color}
\usepackage{tikz-cd}
\usepackage{bm}
\usepackage{hyperref}

\usepackage[top=1.2in, bottom=1.2in, left=0.5in,
right=0.5in]{geometry}

\usepackage[font={small,sl}]{caption}
\usepackage{tikz}
\usepackage{tikz-cd}
\usetikzlibrary{matrix,arrows}

\usepackage{hyperref}
\usepackage[backrefs,msc-links]{amsrefs}

\newcounter{Chapcounter}

\newcommand{\chapter}[1] 
{ {\centering          
  \addtocounter{Chapcounter}{1} \Large \underline{\textbf{ \color{blue} Chapter \theChapcounter: ~#1}} }   
  \addcontentsline{toc}{section}{ \color{blue} Chapter:~\theChapcounter~~ #1}    
}

\theoremstyle{definition}

\theoremstyle{plain}
\newtheorem{thm}{Theorem}[section]

\newtheorem{thm-defn}{Theorem/Definition}[section]
\newtheorem{lem}[thm]{Lemma}
\newtheorem{lem-defn}[thm]{Lemma/Definition}
\newtheorem{prop}[thm]{Proposition}
\newtheorem{cor}[thm]{Corollary}
\newtheorem{conjecture}[thm]{Conjecture}
\newtheorem{prop-defn}[thm]{Proposition-Definition}

\newtheorem{defn}[thm]{Definition}

\newtheorem{thm-alg}[thm]{Theorem/Algorithm}

\newcommand{\mbf}{\mathbf}
\newcommand{\mbb}{\mathbb}
\newcommand{\mf}{\mathfrak}
\newcommand{\mc}{\mathcal}

\title{Quantum dynamical Weyl groups from quantum loop groups of arbitrary shuffle type}
\author{Tianqing Zhu}
\affil{Department of Mathematics, Columbia University \\
       tz2611@columbia.edu}

\begin{document}

\date{}

\maketitle
\begin{abstract} We construct quantum dynamical Weyl group elements associated with quantum loop groups of arbitrary shuffle type. Using the construction, we define the quantum lattice operator and the algebraic quantum difference equations for each tensor products of semisimple modules $V$ in category $\mathcal{O}$. We prove that algebraic quantum difference operators form a family of commuting operators, and they also commute with the qKZ operators for the tensor products of modules of the above type in $\mc{O}$. This recovers the construction in \cite{OS22} and can be viewed as the difference analog of the trigonmetric Casimir connection when the quantum loop group corresponds to the finite type symmetrisable Cartan matrix.
\end{abstract}

\tableofcontents

\section{\textbf{Introduction}}
The quantum dynamical Weyl group was first introduced systematically in \cite{EV02}. It is a dynamical analogue of the quantum Weyl group, defined for the quantum affine algebra $U_{q}(\hat{\mf{g}})$ rather than the finite-dimensional quantum group $U_{q}(\mf{g})$. A key ingredient of the construction is the dynamical monodromy operator $\mbf{B}_{w}(z)$ in Proposition 14 in \cite{EV02}.

A natural question is whether the construction can be generalised to the construction to more general settings. In \cite{OS22}, the generalisation of the dynamical monodromy operator was constructed via the stable envelopes for the equivariant $K$-theory of the Nakajima quiver varieties.

In this paper, under the setting of the quantum loop group $\mbf{U}_{Q}$ of the shuffle algebra type, we follow the approach of \cite{OS22} to construct two families of operators: dynamical monodromy operators $\mbf{B}_{\mbf{m}}(z)$ for $\mbf{m}\in\mbb{R}^{I}$ and the quantum lattice operators $\mbf{B}^s_{\mc{L}}(z)$ for $\mc{L}\in\mbb{Z}^{I}$. These operators are defined in equations \eqref{defn-of-monodromy} and \eqref{defn-of-quantum-lattice-operator}. The quantum lattice operator can be defined over arbitrary weight modules in $\mc{O}$ for the quantum loop group $\mbf{U}_{Q}$. In Theorem \ref{path-independence-theorem}, we prove that the quantum lattice operator $\mbf{B}_{\mc{L}}^s(z)$ is independent of the choice of the path from $s$ to $s-\mc{L}$ in $\mbb{R}^{I}$ with $\mc{L}\in\mbb{Z}^I$.

In the special case where $V\in\mc{O}$ is a semisimple module, or a tensor product of the semisimple modules. We can use the quantum lattice operator to construct the algebraic quantum difference equation. It takes the following form:
\begin{align}
\Psi(zp^{\mc{L}})=\mbf{M}_{\mc{L}}^s(z)\Psi(z),\qquad\mbf{M}_{\mc{L}}^s(z)=\mc{L}\prod_{\mbf{m}\in[s,s-\mc{L})}\mbf{B}_{\mbf{m}}(z)
\end{align}
where each dynamical monodromy operator $\mbf{B}_{\mbf{m}}(z)$ is defined in the slope subalgebra $\mc{B}_{\mbf{m}}$ in the quantum loop group $\mbf{U}_{Q}$, and $\mc{L}$ denotes the intertwining operator  associated to each $\mc{L}\in\mbb{Z}^I$, as explained in Lemma \ref{line-bundle-operator:lemma}. Moreover, Theorem \ref{holonomicity-qde:theorem} shows that the algebraic quantum difference operators $T_{\mc{L}}^{-1}\mbf{M}_{\mc{L}}^s(z)$ commute with each other for arbitrary $\mc{L}\in\mbb{Z}^I$.

Another important result is in Theorem \ref{qKZ-commutes-with-qde:theorem}, where we prove that the algebraic quantum difference equation commutes with the qKZ equation of the corresponding quantum loop group. This generalises the results of \cite{EV02}\cite{TV00} for quantum loop groups of the finite type quiver, as well as the result in \cite{OS22}.

\subsection{Relation to the Okounkov-Smirnov quantum difference equation}
One of the main motivations for this work is to provide the algebraic analogue of the quantum difference equations that arise in \cite{O15,OS22} from the $K$-theoretic quasimap counting for Nakajima quiver varieties.

In \cite{OS22}, Okounkov and Smirnov constructed the dynamical monodromy operator in the setting of the geometric quantum loop group defined by the $K$-theoretic stable envelopes. In particular, by \cite{Z25}, the geometric quantum loop group constructed in \cite{OS22} is isomorphic to the Drinfeld double of the preprojective $K$-theoretic Hall algebra, and it can be realised as a special case of the quantum loop group $\mbf{U}_{Q}$ in \cite{N22}. Thus, the geometric quantum difference equation corresponds to the algebraic quantum difference equation for the Drinfeld double of the preprojective $K$-theoretic Hall algebra in this paper.

An interesting problem for the algebraic quantum difference equations is the description for the connection matrix. In \cite{OS22}, the connection matrix for the geometric quantum difference equation is expressed in terms of elliptic stable envelopes \cite{AO21}\cite{O20}. For the algebraic quantum difference equation, it would be interesting to find an analogous description for the connection matrix of the algebraic quantum difference equations.

\subsection{Relation to the trigonometric Casimir connection}
The algebraic quantum difference equations can be thought of as the analogue of the trigonometric Casimir connections. The trigonometric Casimir connection was first introduced in \cite{L11}, it is a flat connection on a trivial bundle over the Cartan subalgebra, with values in a suitable completion of the Yangian $Y(\mathfrak{g}_{Q})$ of simple Lie algebra $\mf{g}_{Q}$. It is the trigonometric analogue for the rational Casimir connection with first-order singularities in terms of the Kahler variable $z=(z_1,\cdots,z_{I})\in(\mbb{C}^*)^I$. 

A generalisation of the trigonometric Casimir connection was introduced in \cite{MO12}, extending the construction to the Maulik-Okounkov Yangian $Y_{\hbar}(\mf{g}_{Q})$ for the Maulik-Okounkov Lie algebra $\mf{g}_{Q}$ of arbitrary quiver type, and it has been proved in \cite{BD23}\cite{SV23} that the Maulik-Okounkov Yangian is isomorphic to the double of the preprojective cohomological Hall algebra.

It has long been conjectured that as we take $e^{\kappa\hbar}$ and $\kappa\rightarrow0$, the quantum difference equation degenerates to the trigonometric Casimir connection. This has been proved in \cite{B15} that the conjecture is true for the quantum loop group with finite type symmetrisable Cartan matrix. It has also been proved in \cite{Z24}\cite{Z24-2} for the case of the quantum toroidal algebras to the affine Yangians.

In our setting, we expect that the algebraic quantum difference equation of arbitrary type quantum loop groups $\mbf{U}_{Q}$ of shuffle algebra type degenerates, as $\kappa\rightarrow0$,  to the trigonometric Casimir connection of the corresponding Yangian $\mbf{Y}_{Q}$ of the additive shuffle algebra type.

On the level of the monodromy representation for the trigonometric Casimir connection, we also expect the following description for the monodromy representation:
\begin{conjecture}
The monodromy representation for the trigonometric Casimir connection with the base point around $0$ is generated by the operator $\mbf{B}_{\mbf{m}}$ with $\mbf{B}_{\mbf{m}}:=\lim_{z\rightarrow\infty}\mbf{B}_{\mbf{m}}(z)$.
\end{conjecture}

This conjecture has been proved in the case of the quantum loop group $U_{q}(L\mf{sl}_2)$ in \cite{GL11}, and it was also proved in \cite{S24}\cite{Z24}\cite{Z24-2} for the quantum toroidal algebra $U_{q,t}(\hat{\hat{\mf{sl}}}_n)$. The study of such monodromy representations is an interesting problem. It would also be interesting to know how to describe the monodromy representation of different base points and on how to connect the monodromy representation for the different base points.

\subsection{Organisation of the paper}
The paper is organized as follows. In Section \ref{sec:_textbf_shuffle_algebra_of_arbitrary_quivers} we introduce the quantum loop group $\mbf{U}_{Q}$ of shuffle algebra type for arbitrary quivers. Section \ref{sec:_textbf_slope_factorization} discusses the slope factorization of the quantum loop group and recalls the new new coproduct introduced in \cite{N26}. In Section \ref{sec:_textbf_category_mc_o} we recall the definition of category $\mc{O}$ for $\mbf{U}_{Q}$.

Section \ref{sec:_textbf_dynamical_weyl_groups_for_shuffle_algebras} contains the main construction of the paper. We introduce the dynamical monodromy operator $\mbf{B}_{\mbf{m}}(z)$, the quantum lattice operator $\mbf{B}^s_{\mc{L}}(z)$ and the algebraic quantum difference equations for the quantum loop group $\mbf{U}_{Q}$. We prove the main result of the paper that the quantum lattice operator is independent of the choice of the path. We also prove that the algebraic quantum difference operator commutes with each other and commutes with the qKZ operator.

In Section \ref{sec:_textbf_examples}  we present two classes of examples: quantum loop groups of finite type with symmetrizable Cartan matrices, and the Drinfeld double of preprojective $K$-theoretic Hall algebras. In the first case, we prove that the generic dynamical monodromy operator is of $U_q(\mathfrak{sl}_2)$-type. In the second case, using the result of \cite{Z25}, we show that our construction reproduces the Okounkov-Smirnov quantum difference equation for Nakajima quiver varieties.

\subsection{Acknowledgements.}
The author is thankful to Andrei Okounkov, Andrey Smirnov for helpful discussion. The author is partially supported by the international collaboration grant BMSTC and ACZSP (Grant no. Z221100002722017) and by the National Key R \& D Program of
China (Grant no. 2020YFA0713000).

\section{\textbf{Shuffle algebra of arbitrary quivers}}\label{sec:_textbf_shuffle_algebra_of_arbitrary_quivers}
In this section we recall the definition of the shuffle algebra for an arbitrary quiver. For further details, the reader is referred to \cite{N22-2, N26}.

We fix a quiver $Q=(I,E)$, and denote by $\#_{ij}$ the number of arrows from $i$ to $j$.

We define a fraction field $\mbb{K}$ which contains the rational number field $\mbb{Q}$. In this paper we consider a collection of rational functions:
\begin{align*}
\zeta_{ij}(x)=\frac{\tilde{\zeta}_{ij}(x)}{(1-x)^{\delta_{ij}}}
\end{align*}
where
\begin{align*}
\tilde{\zeta}_{ij}(x)=c_{ij}x^{\#_{ij}+\delta_{ij}}+\cdots+c_{ij}'x^{-\#_{ji}}
\end{align*}
for scalars $c_{ij},c_{ij}'\in\mbb{K}^*$ and integers $\#_{ij}$ such that $c_{ij}x^{\#_{ij}}$ is the leading order term of $\zeta_{ij}(x)$ as $x\rightarrow\infty$. We define the bilinear form:
\begin{align}\label{bilinear-form}
\langle\mbf{m},\mbf{n}\rangle=\sum_{i,j\in I}m_in_j\#_{ij},\qquad\mbf{m},\mbf{n}\in\mbb{N}^I.
\end{align}

We also define the following constants which will be used throughout the paper:
\begin{align}\label{important-constant}
\gamma_{\mbf{m},\mbf{n}}=\prod_{i,j\in I}[(-1)^{\delta_{ij}}\frac{c_{ij}}{c_{ji}'}]^{m_in_j}\in\mbb{K}^*.
\end{align}

We also assume that $\lim_{x\rightarrow\infty}\frac{\zeta_{ij}(x)}{\zeta_{ji}(x^{-1})}<\infty$.

\subsection{Prequantum loop group}
We follow the definition given in \cite{N22-2}.

\begin{defn}
The positive part of the \textbf{prequantum loop group} $\tilde{\mbf{U}}^+_{Q}$ associated to the datum $\{\zeta_{ij}\}_{i,j\in I}$ is the $\mbb{K}$-algebra generated by symbols
\begin{align*}
\{e_{i,d}\}_{i\in I,d\in\mbb{Z}}
\end{align*}
modulo the relations for all $i,j\in I$
\begin{align*}
e_i(z)e_j(w)\zeta_{ji}(\frac{w}{z})=e_{j}(w)e_{i}(z)\zeta_{ij}(\frac{z}{w}),\qquad e_i(z)=\sum_{d\in\mbb{Z}}\frac{e_{i,d}}{z^d}.
\end{align*}
\end{defn}

The algebra $\tilde{\mbf{U}}^+_{Q}$ is an $\mbb{N}^I\times\mbb{Z}$-graded $\mbb{K}$-vector space with the grading given by:
\begin{align*}
\text{deg }e_{i,d}=(\mbf{e}_i,d)
\end{align*}
with $\mbf{e}_i$ the $i$-th unit vector in $\mbb{N}^I$.
As a $\mbb{K}$-graded vector space, $\tilde{\mbf{U}}^+_{Q}$ decomposes into the following $(\mbb{N}^I\times\mbb{Z})$-graded components:
\begin{align}\label{graded-pieces-of-prequantum-loop-group}
\tilde{\mbf{U}}^+_{Q}=\bigoplus_{\mbf{n}\in\mbb{N}^I}\tilde{\mbf{U}}^+_{Q,\mbf{n}}=\bigoplus_{(\mbf{n},d)\in\mbb{N}^I\times\mbb{Z}}\tilde{\mbf{U}}^+_{Q,\mbf{n},d},\qquad\tilde{\mbf{U}}_{Q,\mbf{n}}^+:=\bigoplus_{d\in\mbb{Z}}\tilde{\mbf{U}}_{Q,\mbf{n},d}^+.
\end{align}
The positive half $\tilde{\mbf{U}}^+_{Q}$ carries a shift automorphism
\begin{equation}\label{shift-automorphism-quantum-positive}
\begin{tikzcd}
\tilde{\mbf{U}}^+_{Q}\arrow[r,"\tau_{\mbf{k}}"]&\tilde{\mbf{U}}^+_{Q},\qquad e_{i,d}\mapsto e_{i,d+k_i}
\end{tikzcd}
\end{equation}
for any integral vector $\mbf{k}=(k_i)_{i\in I}\in\mbb{Z}^I$.

The negative part of the prequantum loop group is defined as the opposite algebra:
\begin{align*}
\tilde{\mbf{U}}^-_{Q}=\tilde{\mbf{U}}_{Q}^{+,op}
\end{align*}
with generators denoted by $f_{i,d}$ in place of $e_{i,d}$ satisfying the relations:
\begin{align*}
f_i(z)f_j(w)\zeta_{ij}(\frac{z}{w})=f_j(w)f_i(z)\zeta_{ji}(\frac{w}{z}),\qquad f_{i}(z)=\sum_{d\in\mbb{Z}}\frac{f_{i,d}}{z^d}.
\end{align*}

It also carries a shift automorphism $\tau_{\mbf{k}}$ given by:
\begin{equation}\label{shift-automorphism-quantum-negative}
\begin{tikzcd}
\tilde{\mbf{U}}_{Q}^{-}\arrow[r,"\tau_{\mbf{k}}"]&\tilde{\mbf{U}}_{Q}^{-}
\end{tikzcd},\qquad f_{i,d}\mapsto f_{i,d-k_i}
\end{equation}
for $\mbf{k}=(k_i)_{i\in I}\in\mbb{Z}^I$.

\subsection{Shuffle algebras}

We consider an infinite collection of variables $z_{i1},z_{i2},\cdots$ for all $i\in I$. For any $\mbf{n}=(n_i)_{i\in I}\in\mbb{N}^I$, we write $|\mbf{n}|=\sum_{i\in I}n_i$ and $\mbf{n}!=\prod_{i\in I}n_i!$. 

\begin{defn}
The \textbf{big shuffle algebra} associated with the datum of rational functions $\{\zeta_{ij}\}_{i,j\in I}$ is 
\begin{align*}
\mc{V}^{+}_{Q}=\bigoplus_{\mbf{n}\in\mbb{N}^I}\mc{V}_{Q,\mbf{n}}^+,\qquad\mc{V}_{Q,\mbf{n}}^+:=\mbb{K}[z_{i1}^{\pm1},\cdots,z_{in_i}^{\pm1}]_{i\in I}^{\text{Sym}}
\end{align*}
where "Sym" denotes color-symmetric Laurent polynomials , i.e. it is symmetric in the variables $z_{i1},\cdots,z_{in_i}$ for each $i\in I$ separately. The vector space is equipped with the shuffle multiplication:
\begin{equation*}
\begin{aligned}
&F(\cdots,z_{i1},\cdots,z_{in_i},\cdots)*G(\cdots,z_{i1},\cdots,z_{in_i'},\cdots)=\\
&\text{Sym}[\frac{F(\cdots,z_{i1},\cdots,z_{in_i},\cdots)G(\cdots,z_{i,n_i+1},\cdots,z_{i,n_i+n_i'})}{\mbf{n}!\mbf{n}'!}\prod_{\substack{1\leq a\leq n_i\\n_j<b\leq n_j+n_j'}}^{i,j\in I}\zeta_{ij}(\frac{z_{ia}}{z_{jb}})]
\end{aligned}
\end{equation*}
and here "Sym" means the symmetrisation with repsect to $(\mbf{n}+\mbf{n}')!:=\prod_{i\in I}(n_i+n_i')!$ permutations of the variables $\{z_{i1},\cdots,z_{i,n_i+n_i'}\}$ for each $i$ independently.
\end{defn}

The algebra $\mc{V}^+_{Q}$ is graded by $\mbb{N}^I\times\mbb{Z}$, where the $\mathbb{N}^I$-degree records the number of variables of each color and the $\mathbb{Z}$-degree is the homogeneous degree. For $R(\cdots,z_{i1},\cdots,z_{in_i},\cdots)\in\mc{V}^+_{Q}$, we set:
\begin{align*}
\text{deg }R(\cdots,z_{i1},\cdots,z_{in_i},\cdots)=(\mbf{n},\text{hom deg }R)
\end{align*}
where $\text{hom deg }R\in\mbb{Z}$ is the homogeneous degree of the colored-symmetric Laurent polynomial $R$.

The big shuffle algebra $\mc{V}_{Q}^+$ has the $\mbb{N}^I\times\mbb{Z}$-grading as:
\begin{align*}
\mc{V}^+_{Q}=\bigoplus_{\mbf{n}\in\mbb{N}^I}\mc{V}_{Q,\mbf{n}}^+=\bigoplus_{(\mbf{n},d)\in\mbb{N}^I\times\mbb{Z}}\mc{V}_{Q,\mbf{n},d}^+.
\end{align*}

There is also a shift automorphism given by:
\begin{equation}\label{shift-automorphism-shuffle-big}
\begin{tikzcd}
\mc{V}^+_{Q}\arrow[r,"\tau_{\mbf{k}}"]&\mc{V}^+_{Q}
\end{tikzcd},\qquad R(\cdots,z_{ia},\cdots)\mapsto R(\cdots,z_{ia},\cdots)\prod_{i\in I,a\geq1}z_{ia}^{k_i}
\end{equation}
for any $\mbf{k}=(k_i)_{i\in I}\in\mbb{Z}^I$.
Similarly we define the negative half of the shuffle algebra as $\mc{V}^-_Q=\mc{V}^{+,op}_{Q}$. It also has the shift automorphism given by:
\begin{equation}\label{negative-shift-automorphism-shuffle-big}
\begin{tikzcd}
\mc{V}^-_{Q}\arrow[r,"\tau_{\mbf{k}}"]&\mc{V}^-_{Q}
\end{tikzcd},\qquad R(\cdots,z_{ia},\cdots)\mapsto R(\cdots,z_{ia},\cdots)\prod_{i\in I,a\geq1}z_{ia}^{-k_i}.
\end{equation}

To relate $\tilde{\mbf{U}}_{Q}$ with the big shuffle algebra, one can observe that there is a homomorphism of $\mbb{N}^I\times\mbb{Z}$ graded $\mbb{K}$-algebras
\begin{align}\label{algebra-map-core}
i^{\pm}:\tilde{\mbf{U}}^{\pm}_{Q}\rightarrow\mc{V}^{\pm}_{Q},\qquad e_{i,d}, f_{i,d}\mapsto z_{i1}^d\in\mc{V}_{Q}^{\pm}.
\end{align}

It is obvious that the image 
\begin{align*}
\mc{S}^{\pm}_{Q}:=\text{Im}(i^{\pm})
\end{align*}
is generated by $\{z_{i1}^d\}$. The kernel $K^{\pm}=\text{Ker }i^{\pm}$ is a two-sided ideal of $\tilde{\mbf{U}}^+_{Q}$. 

\begin{defn}
We define the positive/negative part of the quantum loop group associated to the datum $\{\zeta_{ij}\}_{i,j\in I}$ as the $\mbb{K}$-algebra
\begin{align*}
\mbf{U}^{\pm}_{Q}:=\tilde{\mbf{U}}^{\pm}_{Q}/K^{\pm}
\end{align*}
which induces the isomorphisms:
\begin{align}\label{isom-of-shuffle-and-quotient}
i^{\pm}:\mbf{U}_{Q}^{\pm}\cong\mc{S}^{\pm}.
\end{align}
\end{defn}

\begin{lem}\label{shifted-automorphism-on-quantum-loop-group:lemma}
The shift automorphism described in \eqref{shift-automorphism-shuffle-big} and \eqref{negative-shift-automorphism-shuffle-big} can be restricted to the shift automorphism $\tau_{\mbf{k}}:\mc{S}^{\pm}\rightarrow\mc{S}^{\pm}$ for $\mbf{k}\in\mbb{Z}^I$. Moreover, via the isomorphism \eqref{isom-of-shuffle-and-quotient} induces the shift automorphism $\tau_{\mbf{k}}:\mbf{U}_{Q}^{\pm}\rightarrow\mbf{U}_{Q}^{\pm}$ for $\mbf{k}\in\mbb{Z}^I$.
\end{lem}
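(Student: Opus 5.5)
The plan is to show that the shift automorphism $\tau_{\mbf{k}}$ of \eqref{shift-automorphism-shuffle-big} is an algebra automorphism of the big shuffle algebra $\mc{V}^+_{Q}$. From that, restriction to $\mc{S}^+_Q$ and transport through \eqref{isom-of-shuffle-and-quotient} will follow formally.

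The first step is multiplicativity. Take $F\in\mc{V}^+_{Q,\mbf{n}}$ and $G\in\mc{V}^+_{Q,\mbf{n}'}$. The factor $\prod_{i\in I,a\geq 1}z_{ia}^{k_i}$ attached to $F*G$ in the variables $z_{i1},\dots,z_{i,n_i+n_i'}$ is fully colour-symmetric. It can therefore be pulled inside the symmetrisation $\text{Sym}$, where it splits as the product of $\prod_{a\leq n_i}z_{ia}^{k_i}$ (attached to $F$) and $\prod_{n_i<a\leq n_i+n_i'}z_{ia}^{k_i}$ (attached to $G$). The $\zeta_{ij}$ factors are untouched, so
\begin{align*}
\tau_{\mbf{k}}(F*G)=\tau_{\mbf{k}}(F)*\tau_{\mbf{k}}(G).
\end{align*}
The map is clearly $\mbb{K}$-linear and has inverse $\tau_{-\mbf{k}}$, so it is an automorphism of $\mc{V}^+_Q$. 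The same argument applied to the opposite multiplication, with exponent $-k_i$ as in \eqref{negative-shift-automorphism-shuffle-big}, handles $\mc{V}^-_Q$.

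The second step is to show that $\mc{S}^\pm_Q$ is preserved. Since $\mc{S}^+_Q$ is generated by the elements $z_{i1}^d$, and $\tau_{\mbf{k}}$ is multiplicative, it suffices to check generators: $\tau_{\mbf{k}}(z_{i1}^d)=z_{i1}^{d+k_i}$, which again lies in $\mc{S}^+_Q$. Hence $\tau_{\mbf{k}}(\mc{S}^+_Q)\subseteq\mc{S}^+_Q$. Applying the same to $\tau_{-\mbf{k}}$ gives the reverse inclusion, so the restriction is an automorphism. For the negative side, $z_{i1}^d\mapsto z_{i1}^{d-k_i}$ is likewise a generator.

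The last step is to transport the automorphism to $\mbf{U}^\pm_Q$ via $i^\pm$, defining $(i^\pm)^{-1}\circ\tau_{\mbf{k}}\circ i^\pm$. One should check compatibility with \eqref{shift-automorphism-quantum-positive} and \eqref{shift-automorphism-quantum-negative}. On the positive side, $i^+(e_{i,d+k_i})=z_{i1}^{d+k_i}=\tau_{\mbf{k}}(z_{i1}^d)$, so the relation $i^+\circ\tau_{\mbf{k}}=\tau_{\mbf{k}}\circ i^+$ holds on generators, hence everywhere. In particular $\tau_{\mbf{k}}$ on $\tilde{\mbf{U}}^+_Q$ preserves $K^+$, so the transported map coincides with the descent of the prequantum shift. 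The negative side is identical.

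The only point needing care is the first step, namely that the normalisation by $\mbf{n}!\,\mbf{n}'!$ and the symmetrisation interact correctly with the symmetric monomial factor. This is immediate once one observes that the factor is invariant under all colour-preserving permutations.
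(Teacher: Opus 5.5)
Your proposal is correct and follows the same route as the paper: use that $\tau_{\mathbf{k}}$ is multiplicative, check it on the generators $z_{i1}^{d}\mapsto z_{i1}^{d\pm k_i}$ of $\mathcal{S}^{\pm}_{Q}$, and transport through $i^{\pm}$. The paper simply asserts that $\tau_{\mathbf{k}}$ is an algebra automorphism, whereas you justify this (the colour-symmetric monomial commutes with $\mathrm{Sym}$) and also supply three points the paper leaves implicit: the reverse inclusion via $\tau_{-\mathbf{k}}$, and the compatibility $i^{\pm}\circ\tau_{\mathbf{k}}=\tau_{\mathbf{k}}\circ i^{\pm}$ with the consequence $\tau_{\mathbf{k}}(K^{\pm})\subseteq K^{\pm}$.
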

\begin{proof}
We give the proof for $\mc{S}^+$ since the proof for $\mc{S}^-$ is similar. The second part of the lemma follows immediately from \eqref{isom-of-shuffle-and-quotient} and the first part of the lemma.

Since $\mc{S}^+$ is generated by $\{z_{i1}^d\}_{i\in I}$, one can write down the elements in $\mc{S}^+$ as:
\begin{align*}
\sum_{(\mbf{i},\mbf{d})}a_{\mbf{i}}z_{i_1,1}^{d_1}*\cdots*z_{i_n,1}^{d_n}.
\end{align*}
The image of the above term under the shifted automorphism $\tau_{\mbf{k}}$ is given by:
\begin{align}\label{term-after-shifted}
(\sum_{(\mbf{i},\mbf{d})}a_{\mbf{i}}z_{i_1,1}^{d_1}*\cdots*z_{i_n,1}^{d_n})\prod_{\substack{i\in I\\a\geq0}}z_{ia}^{k_i}
\end{align}
and thus it remains to prove that \eqref{term-after-shifted} is in $\mc{S}^+$. Since $\tau_{\mbf{k}}$ is the algebra automorphism, we have that:
\begin{align*}
\tau_{\mbf{k}}(\sum_{(\mbf{i},\mbf{d})}a_{\mbf{i}}z_{i_1,1}^{d_1}*\cdots*z_{i_n,1}^{d_n})=\sum_{(\mbf{i},\mbf{d})}a_{\mbf{i}}(\tau_{\mbf{k}}z_{i_1,1}^{d_1})*\cdots*(\tau_{\mbf{k}}z_{i_n,1}^{d_n})
\end{align*}
and since $\tau_{\mbf{k}}z_{i1}^d=z_{i1}^{d+k_i}$ is in $\mc{S}^+$, the proof is finished.
\end{proof}

\subsection{Quantum loop groups}

We consider the Drinfeld version extended algebras:
\begin{align*}
\tilde{\mbf{U}}^{\geq}_{Q}=\frac{\tilde{\mbf{U}}^+_{Q}\otimes\mbb{K}[h_{i,0}^{+},h_{i,1}^+,h_{i,2}^+,\cdots]_{i\in I}}{h_i^+(z)e_j(w)\zeta_{ji}(\frac{w}{z})=e_{j}(w)h_i^+(z)\zeta_{ij}(\frac{z}{w})}
\end{align*}

\begin{align*}
\tilde{\mbf{U}}^{\leq}_{Q}=\frac{\tilde{\mbf{U}}^-_{Q}\otimes\mbb{K}[h_{i,0}^{-},h_{i,-1}^{-},h_{i,-2}^-,\cdots]_{i\in I}}{h_{i}^-(z)f_j(w)\zeta_{ij}(\frac{z}{w})=f_j(w)h_i^-(z)\zeta_{ji}(\frac{w}{z})}
\end{align*}
where
\begin{align*}
h_{i}^+(z)=\sum_{d=0}^{\infty}\frac{h_{i,d}^+}{z^d},\qquad h_{i}^-(z)=\sum_{d=0}^{\infty}h_{i,-d}^{-}z^d
\end{align*}

Since the assumption implies that $\frac{\zeta_{ij}(x)}{\zeta_{ji}(x^{-1})}$ is regular and nonzero at both $x=0$ and $x=\infty$. We can similarly define:
\begin{align*}
\mc{V}_{Q}^{\geq}=\mc{V}^+_{Q}\otimes\mbb{K}[h_{i,d}^+]_{i\in I,d\geq0},\qquad\mc{V}_{Q}^{\leq}=\mc{V}^-_{Q}\otimes\mbb{K}[h_{i,d}^-]_{i\in I,d\leq0}
\end{align*}
with the following relations:
\begin{equation*}
h_i^+(y)F(z_{j1},\cdots,z_{jn_j})_{j\in I}=F(z_{j1},\cdots,z_{jn_j})_{j\in I}h_i^+(y)\prod_{j\in I}\prod_{a=1}^{n_j}\frac{\zeta_{ij}(\frac{y}{z_{ja}})}{\zeta_{ji}(\frac{z_{ja}}{y})}
\end{equation*}

\begin{equation*}
G(z_{j1},\cdots,z_{jn_j})_{j\in I}h_{i}^-(y)=h_i^-(y)G(z_{j1},\cdots,z_{jn_j})_{j\in I}\prod_{j\in I}\prod_{a=1}^{n_j}\frac{\zeta_{ij}(\frac{z_{ja}}{y})}{\zeta_{ji}(\frac{z_{ja}}{y})}
\end{equation*}
for $F\in\mc{V}_{Q}^+$ and $G\in\mc{V}_{Q}^-$.

Since the homomorphism $i^{\pm}$ repsect all the constructions above, we obtain the algebra structures on:
\begin{align*}
\mbf{U}_{Q}^{\geq}=\mbf{U}^+_{Q}\otimes\mbb{K}[h_{i,d}^+]_{i\in I,d\geq0},\qquad\mbf{U}_{Q}^{\leq}=\mbf{U}^-_{Q}\otimes\mbb{K}[h_{i,d}^-]_{i\in I,d\leq0}.
\end{align*}

More generally, given elements $F\in\mbf{U}^+_{Q,\mbf{n}}$, $G\in\mbf{U}^-_{Q,-\mbf{n}}$, we have that:
\begin{align*}
Fh_j^{\pm}(w)=h_{j}^{\pm}(w)F\prod_{1\leq a\leq n_i}^{i\in I}\frac{\zeta_{ij}(\frac{z_{ia}}{w})}{\zeta_{ji}(\frac{w}{z_{ia}})}\\
Gh_{j}^{\pm}(w)=h_{j}^{\pm}(w)G\prod_{1\leq a\leq n_i}^{i\in I}\frac{\zeta_{ji}(\frac{w}{z_{ia}})}{\zeta_{ij}(\frac{z_{ia}}{w})}.
\end{align*}

For simplicity, we denote the following completion for the tensor product of the $\mbb{Z}^I\times\mbb{Z}$-algebra $A$:
\begin{align}\label{V-and-H-coproduct}
A\otimes^{V}A=\bigoplus_{(\mbf{n},d)\in\mbb{Z}^I\times\mbb{Z}}(A\otimes^VA)_{\mbf{n},d},\qquad A\otimes^HA=\bigoplus_{(\mbf{n},d)\in\mbb{Z}^I\times\mbb{Z}}(A\otimes^HA)_{\mbf{n},d}
\end{align}
where $(A\otimes^VA)_{\mbf{n},d}$ consists of infinite sums of tensors $x\otimes y$ where all but finitely many summands have the property that $\text{vdeg }x\geq N$ and $\text{vdeg }y\leq -N$. $(A\otimes^HA)_{\mbf{n},d}$ consists of infinite sums of tensors $x\otimes y$ where all but finitely many summands have that $|\text{hdeg }x|\leq-N$ and $|\text{hdeg }y|\geq N$ for any $N\in\mbb{N}$.

\subsubsection{The Drinfeld coproduct}
The algebra $\tilde{\mbf{U}}_{Q}$ has a Drinfeld type coproduct $\Delta:\tilde{\mbf{U}}_{Q}^{\geq}\rightarrow\tilde{\mbf{U}}_{Q}^{\geq}\otimes^V\tilde{\mbf{U}}_{Q}^{\geq}$ and $\Delta:\tilde{\mbf{U}}_{Q}^{\leq}\rightarrow\tilde{\mbf{U}}_{Q}^{\leq}\otimes^V\tilde{\mbf{U}}_{Q}^{\leq}$ defined by:
\begin{equation*}
\begin{aligned}
&\Delta(h_i^{\pm}(z))=h_i^{\pm}(z)\otimes h_i^{\pm}(z)\\
&\Delta(e_i(z))=h_i^+(z)\otimes e_i(z)+e_i(z)\otimes1\\
&\Delta(f_i(z))=1\otimes f_i(z)+f_i(z)\otimes h_i^-(z)
\end{aligned}
\end{equation*}

Similarly on the side of the shuffle algebra, we have the Drinfeld type copoduct:
\begin{align}\label{Drinfeld-coproduct-definition}
\Delta:\mc{V}_{Q}^{\geq}\rightarrow\mc{V}_{Q}^{\geq}\otimes^V\mc{V}_{Q}^{\geq},\qquad\Delta:\mc{V}_{Q}^{\leq}\rightarrow\mc{V}_{Q}^{\leq}\otimes^V\mc{V}_{Q}^{\leq}
\end{align}
written as:
\begin{align*}
\Delta(F)=\sum_{[0\leq k_i\leq n_i]_{i\in I}}\frac{\prod_{k_j<b\leq n_j}^{j\in I}h_{j}^{+}(z_{jb})F(\cdots,z_{i1},\cdots,z_{ik_i}\otimes z_{i,k_i+1},\cdots,z_{in_i},\cdots)}{\prod_{1\leq a\leq k_i}^{i\in I}\prod_{k_j<b\leq n_j}^{j\in I}\zeta_{ji}(z_{jb}/z_{ia})}
\end{align*}

\begin{align*}
\Delta(G)=\sum_{[0\leq k_i\leq n_i]_{i\in I}}\frac{G(\cdots,z_{i1},\cdots,z_{ik_i}\otimes z_{i,k_i+1},\cdots,z_{in_i},\cdots)\prod_{1\leq b\leq k_j}^{j\in I}h_{j}^{-}(z_{jb})}{\prod_{1\leq a\leq k_i}^{i\in I}\prod_{k_j<b\leq n_j}^{j\in I}\zeta_{ij}(z_{ia}/z_{jb})}
\end{align*}
for $F\in\mc{V}^{\geq}_{Q,\mbf{n}}$ and $G\in\mc{V}^{\leq}_{Q,-\mbf{n}}$.

It is easy to see that the algebra in \eqref{algebra-map-core} respects the coproduct structures on both sides. Thus it descends to the coproduct on
\begin{align*}
\Delta:\mbf{U}_{Q}^{\geq}\rightarrow\mbf{U}_{Q}^{\geq}\otimes^{V}\mbf{U}_{Q}^{\geq},\qquad\Delta:\mbf{U}_{Q}^{\leq}\rightarrow\mbf{U}_{Q}^{\leq}\otimes^{V}\mbf{U}_{Q}^{\leq}
\end{align*}

\subsubsection{Bialgebra pairing}
We use the following notation for the rational function $G$:
\begin{align*}
\int_{|z_1|>>\cdots>>|z_n|}G(z_1,\cdots,z_n)
\end{align*}
denotes the constant term in the expansion of $G$ as a power series in
\begin{align*}
\frac{z_2}{z_1},\cdots,\frac{z_n}{z_{n-1}}
\end{align*}
and we also define
\begin{align*}
\int_{|z_1|<<\cdots<<|z_n|}G(z_1,\cdots,z_n)
\end{align*}
in a similar way.

\begin{defn}\label{bialgebra-pairing-for-the-large-algebra:label}
There is a bialgebra pairing:
\begin{equation*}
\langle-,-\rangle:\tilde{\mbf{U}}_{Q}^+\otimes\mc{V}_{Q}^-\rightarrow\mbb{K},\qquad\langle-,-\rangle:\mc{V}_Q^+\otimes\tilde{\mbf{U}}_Q^-\rightarrow\mbb{K}
\end{equation*}
given for all $E\in\mc{V}_{Q,\mbf{n}}^+$, $F\in\mc{V}_{Q,-\mbf{n}}^-$ and all $i_1,\cdots,i_n\in I$, $d_1,\cdots,d_n\in\mbb{Z}$ by:
\begin{align*}
\langle e_{i_1,d_1}\cdots e_{i_n,d_n},F\rangle=\int_{|z_1|>>\cdots>>|z_n|}\frac{z_1^{d_1}\cdots z_n^{d_n}F(z_1,\cdots,z_n)}{\prod_{1\leq a<b\leq n}\zeta_{i_bi_a}(\frac{z_b}{z_a})}
\end{align*}
\begin{align*}
\langle E,f_{i_1,d_1}\cdots f_{i_n,d_n}\rangle=\int_{|z_1|<<\cdots<<|z_n|}\frac{z_1^{d_1}\cdots z_n^{d_n}E(z_1,\cdots,z_n)}{\prod_{1\leq a<b\leq n}\zeta_{i_ai_b}(\frac{z_a}{z_b})}
\end{align*}
if $\mbf{e}_{i_1}+\cdots+\mbf{e}_{i_n}=\mbf{n}$, and $0$ otherwise.
\end{defn}

It was shown in \cite{N22-2} that the above bialgebra pairing can descend to the following bialgebra pairing
\begin{align*}
\langle-,-\rangle:\mbf{U}_{Q}^+\otimes\mc{S}_{Q}^-\rightarrow\mbb{K},\qquad\langle-,-\rangle:\mc{S}_Q^+\otimes\mbf{U}_{Q}^-\rightarrow\mbb{K}
\end{align*}
which is nondegenerate as being proved in \cite{N22-2}. Moreover, via the isomorphism \eqref{isom-of-shuffle-and-quotient}, 
we can have the nondegenerate bialgebra pairing:
\begin{align}\label{nondegenerate-bialgebra-pairing}
\langle-,-\rangle:\mbf{U}^{\geq}_{Q}\otimes\mbf{U}^{\leq}_{Q}\rightarrow\mbb{K}
\end{align}
which is defined as:
\begin{align*}
\langle h_{i}^+(z),h_{j}^-(w)\rangle=\frac{\zeta_{ij}(\frac{z}{w})}{\zeta_{ji}(\frac{w}{z})},\qquad\text{expanded as }|z|>>|w|
\end{align*}

\begin{align*}
\langle h_{i,0}^+,h_{j,0}^-\rangle=\gamma_{\mbf{e}_i,\mbf{e}_j}
\end{align*}

\begin{align*}
\langle e_{i,d},f_{j,k}\rangle=\delta_{ij}\delta_{d+k,0}
\end{align*}
which is a perfect pairing.

At the stage we can introduce the definition of the quantum loop group that is used in this paper.

\begin{defn}
The \textbf{quantum loop group} is defined as
\begin{align*}
\mbf{U}_{Q}=\mbf{U}^{\geq}_{Q}\otimes\mbf{U}^{\leq}_{Q}/(h_{i,0}^+\otimes h_{i,0}^-=1)
\end{align*}
with the multiplication governed by the Drinfeld double relation
\begin{align*}
(x\otimes y)(x'\otimes y')=\sum_{a}\sum_{b}\langle S(x_{1,a}'),y_{1,b}\rangle xx_{2,a}'\otimes y_{2,b}y'\langle x_{3,a}',y_{3,b}\rangle.
\end{align*}
\end{defn}

Using the relation, it is straightforward to deduce the relations:
\begin{align*}
[e_i(z),f_j(w)]=\delta_{ij}\delta(\frac{z}{w})\cdot[h_{i}^+(z)-h_{i}^-(w)].
\end{align*}

Similarly, we can define the quadratic quantum loop group $\tilde{\mbf{U}}_{Q}$ as the algebra generated by $\{e_{i,d},f_{i,d},h_{i,n},h_{i,-n}'\}_{i\in I,d\in\mbb{Z},n\geq0}$ subject to the following relations:
\begin{align*}
e_i(z)e_j(w)\zeta_{ji}(\frac{w}{z})=e_j(w)e_i(z)\zeta_{ij}(\frac{z}{w}),\qquad f_i(z)f_j(w)\zeta_{ij}(\frac{z}{w})=f_j(w)f_i(z)\zeta_{ji}(\frac{w}{z})
\end{align*}

\begin{align*}
h_i^+(z)e_j(w)\zeta_{ji}(\frac{w}{z})=e_{j}(w)h_i^+(z)\zeta_{ij}(\frac{z}{w}),\qquad h_{i}^-(z)f_j(w)\zeta_{ij}(\frac{z}{w})=f_j(w)h_i^-(z)\zeta_{ji}(\frac{w}{z})
\end{align*}

The Drinfeld double $\mbf{U}_{Q}$ with respect to the topological coproduct structure $\Delta$ will give us a universal $R$-matrix $\mc{R}\in\mbf{U}_{Q}^{\geq}\hat{\otimes}\mbf{U}_{Q}^{\leq}$.

\subsection{Examples for the quantum loop group}
In this subsection we illustrate some examples of the quantum loop group that might be familiar to the readers who work on the representation theory of quantum affine algebras.
\subsubsection{Quantum loop group associated to finite type symmetrisable Cartan matrix}\label{ssub:quantum_loop_group_associated_to_finite_type_symmetrisable_cartan_matrix}
Now we take $\mbb{K}=\mbb{Q}(q)$. If we fix a finite set $I$ and a symmetrisable Cartan matrix
\begin{align*}
(\frac{2d_{ij}}{d_{ii}}\in\mbb{Z})_{i,j\in I}
\end{align*}
where $d_ii$ are even positive integers with $\text{gcd }2$, and $d_{ij}=d_{ji}$ are non-positive integers. We assume that the symmetrisable Carten matrix is of the finite type, which means that it will determine a finite type Kac-Moody Lie algebra $\mf{g}$.

For the quantum loop group $\mbf{U}_{Q}:=U_{q}(L\mf{g})$ associated with the above symmetrisable Cartan matrix, it will be associated with the data $\{\zeta_{ij}(x)\}_{i,j\in I}$ chosen as:
\begin{align*}
\zeta_{ij}(x)=\frac{x-q^{-d_{ij}}}{x-1}
\end{align*}

In this case this is the quantum loop group that has been studied by large amounts of the literature (e.g.). In this case the kernel $K^{+}$ of the map \eqref{algebra-map-core} is described by the Drinfeld's $q$-Serre relations for all $i\neq j$ in $I$:
\begin{align*}
\sum_{\sigma\in S_{n}}\sum_{k=0}^n(-1)^k\binom{n}{k}_ie_{i}(x_{\sigma(1)})\cdots e_{i}(x_{\sigma(k)})e_j(y)e_i(x_{\sigma(k+1)})\cdots e_{i}(x_{\sigma(n)})=0
\end{align*}
where $n=1-\frac{2d_{ij}}{d_{ii}}$, $\binom{n}{k}_i=\frac{[n]_i!}{[k]_i![n-k]_i!}$ with $[k]_i!=[1]_i\cdots[k]_i$ and $[k]_i=\frac{q_i^{k}-q_i^{-k}}{q_i-q_i^{-1}}$.

\subsubsection{Drinfeld double of the preprojective $K$-theoretic Hall algebra}
Fix the quiver $Q=(I,E)$. Now we take $\mbb{K}=\mbb{Q}(q,t_e)_{e\in E}$. The localised preprojective $K$-theoretic Hall algebra $\mc{A}_{Q}^+$ was defined in \cite{VV22}. It has the positive half of the quantum loop group $\mbf{U}_{Q}^+$ description with the datum of rational functions $\{\zeta_{ij}(x)\}_{i,j\in I}$ given by \cite{N22, N23, N23-2}:
\begin{align*}
\zeta_{ij}(x)=(\frac{1-q^{-1}x}{1-x})^{\delta^i_j}\prod_{e=ij\in E}(1-t_ex)\prod_{e=ji}(1-\frac{q}{t_ex}).
\end{align*}

Then the Drinfeld double of the preprojective $K$-theoretic Hall algebra $\mc{A}_{Q}$ is the quantum loop group $\mbf{U}_Q$ associated with the above datum of rational functions. It was proved in \cite{Z25} that $\mc{A}_{Q}$ is isomorphic to the geometric quantum loop group $U_{q}^{MO}(\hat{\mf{g}}_{Q})$ defined in \cite{OS22}.

\section{\textbf{Slope factorization}}\label{sec:_textbf_slope_factorization}

In this subsection we introduce the slope factorisation and slope subalgebras for the quantum loop group $\mbf{U}_{Q}$. In this paper we take $\mbf{U}_{Q}^{\pm}$ as the shuffle algebras $\mc{S}_Q^{\pm}$ via the isomorphism \eqref{isom-of-shuffle-and-quotient}. We always consider the elements in $\mbf{U}_{Q}^{\pm}$ as the shuffle elements in $\mc{S}_Q^{\pm}$.

For the detailed reference for the slope subalgebra one can refer to \cite{N22}\cite{N22-2}\cite{N26}.

\begin{defn}
For $F\in\mbf{U}^{+}_{Q}$, we say that $F$ is of slope $\leq\mbf{m}$ if:
\begin{align*}
\lim_{\xi\rightarrow\infty}\frac{F(\cdots,\xi z_{i1},\cdots,\xi z_{i,k_i},z_{i,k_i+1},\cdots,z_{i,n_i},\cdots)}{\xi^{\mbf{m}\cdot\mbf{k}+\langle\mbf{k},\mbf{n}-\mbf{k}\rangle}}
\end{align*}
is finite for all $\mbf{0}\leq\mbf{k}\leq\mbf{n}$.

We also say that $F$ is of slope $\geq\mbf{m}$ if:
\begin{align*}
\lim_{\xi\rightarrow0}\frac{F(\cdots,\xi z_{i1},\cdots,\xi z_{ik_i},z_{i,k_i+1},\cdots,z_{i,n_i},\cdots)}{\xi^{\mbf{m}\cdot\mbf{k}-\langle\mbf{n}-\mbf{k},\mbf{k}\rangle}}
\end{align*}
is finite for all $\mbf{0}\leq\mbf{k}\leq\mbf{n}$.

Similarly, we will say that $G\in\mbf{U}^{-}_{Q}$ has slope $\leq\mbf{m}$ if:
\begin{align*}
\lim_{\xi\rightarrow0}\frac{G(\cdots,\xi z_{i1},\cdots,\xi z_{i,k_i},z_{i,k_i+1},\cdots,z_{i,n_i},\cdots)}{\xi^{-\mbf{m}\cdot\mbf{k}-\langle\mbf{n}-\mbf{k},\mbf{k}\rangle}}
\end{align*}
is finite for all $\mbf{0}\leq\mbf{k}\leq\mbf{n}$.

We also say that $G$ is of slope $\geq\mbf{m}$ if:
\begin{align*}
\lim_{\xi\rightarrow\infty}\frac{G(\cdots,\xi z_{i1},\cdots,\xi z_{i,k_i},z_{i,k_i+1},\cdots,z_{i,n_i},\cdots)}{\xi^{-\mbf{m}\cdot\mbf{k}+\langle\mbf{k},\mbf{n}-\mbf{k}\rangle}}
\end{align*}
is finite for all $\mbf{0}\leq\mbf{k}\leq\mbf{n}$.
\end{defn}

Let us denote the subspaces of shuffle elements of slope $\leq\mbf{m}$ and $\geq\mbf{m}$ by:
\begin{align*}
\mbf{U}_{Q,\leq\mbf{m}}^{\pm}, \mbf{U}_{Q,\geq\mbf{m}}^{\pm}\subset\mbf{U}^{\pm}_{Q}
\end{align*}
and the graded pieces:
\begin{align*}
\mbf{U}_{Q,\leq\mbf{m}|\pm\mbf{n},\pm d}^{\pm}=\mbf{U}_{Q,\pm\mbf{n},\pm d}^{\pm}\cap\mbf{U}_{Q,\leq\mbf{m}}^{\pm},\qquad\mbf{U}_{Q,\geq\mbf{m}|\pm\mbf{n},\pm d}^{\pm}=\mbf{U}_{Q,\pm\mbf{n},\pm d}^{\pm}\cap\mbf{U}_{Q,\geq\mbf{m}}^{\pm}.
\end{align*}

By definition it is easy to see that $\mbf{U}_{\leq\mbf{m}|\pm\mbf{n},\pm d}$ are finite-dimensional for any $(\mbf{n},d)\in\mbb{N}^I\times\mbb{Z}$.

Also we denote the subspace of shuffle element of slope between $\mbf{m}_1$ and $\mbf{m}_2$ as:
\begin{align*}
\mbf{U}^{\pm}_{Q,[\mbf{m}_1,\mbf{m}_2]}:=\mbf{U}^{\pm}_{Q,\geq\mbf{m}_1}\cap\mbf{U}^{\pm}_{Q,\leq\mbf{m}_2}
\end{align*}
and its graded pieces:
\begin{align*}
\mbf{U}^{\pm}_{Q,[\mbf{m}_1,\mbf{m}_2]|\pm\mbf{n},\pm d}=\mbf{U}^{\pm}_{Q,\pm\mbf{n},\pm d}\cap\mbf{U}^{\pm}_{Q,[\mbf{m}_1,\mbf{m}_2]}.
\end{align*}

The following lemma is easy to prove:

\begin{lem}
$\mbf{U}_{Q,\leq\mbf{m}}^{\pm}$, $\mbf{U}_{Q,\geq\mbf{m}}^{\pm}$ and $\mbf{U}^{\pm}_{Q,[\mbf{m}_1,\mbf{m}_2]}$ are subalgebras of $\mbf{U}_{Q}^{\pm}$.
\end{lem}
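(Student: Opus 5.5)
We prove closure under the shuffle product directly from the limit conditions. We treat $\mbf{U}^+_{Q,\leq\mbf{m}}$ in detail. The case $\geq\mbf{m}$ is the same argument with $\xi\rightarrow0$ in place of $\xi\rightarrow\infty$. The negative halves follow by the same computation applied to the opposite product. The interval case $\mbf{U}^{\pm}_{Q,[\mbf{m}_1,\mbf{m}_2]}$ is then immediate, since an intersection of subalgebras is a subalgebra. Linear closure and containing $1$ are clear, since the conditions are stated as finiteness of limits. So the content is: if $F\in\mbf{U}^+_{Q,\leq\mbf{m}|\mbf{n}}$ and $G\in\mbf{U}^+_{Q,\leq\mbf{m}|\mbf{n}'}$, then $F*G$ has slope $\leq\mbf{m}$.

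Fix $\mbf{0}\leq\mbf{k}\leq\mbf{n}+\mbf{n}'$. The symmetrisation in $F*G$ is a finite sum over ways of distributing the variables between $F$ and $G$. Because of the colour-symmetry of $F$ and $G$, each summand is determined by which of the $\xi$-scaled variables go to $F$ and which go to $G$. Say $\mbf{k}_1\leq\mbf{n}$ scaled variables go into $F$ and $\mbf{k}_2=\mbf{k}-\mbf{k}_1\leq\mbf{n}'$ go into $G$. It suffices to bound each summand separately, and each summand is a product of three factors.

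The factor $F$ grows as $\xi\to\infty$ at most like $\xi^{\mbf{m}\cdot\mbf{k}_1+\langle\mbf{k}_1,\mbf{n}-\mbf{k}_1\rangle}$, by hypothesis. Likewise $G$ grows at most like $\xi^{\mbf{m}\cdot\mbf{k}_2+\langle\mbf{k}_2,\mbf{n}'-\mbf{k}_2\rangle}$. For the $\zeta$-factors $\zeta_{ij}(z_{ia}/z_{jb})$ with $a$ in the $F$-block and $b$ in the $G$-block there are three cases:
\begin{itemize}
\item Only $z_{ia}$ is scaled. The factor grows like $\xi^{\#_{ij}}$, using the leading term $c_{ij}x^{\#_{ij}}$.
\item Only $z_{jb}$ is scaled. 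The argument tends to $0$, so the factor is bounded, by $\xi^{0}$ up to constants. Here we use that the lowest term of $\tilde\zeta_{ij}$ is $x^{-\#_{ji}}$ and that the denominator $(1-x)^{\delta_{ij}}\to1$, which gives growth $\xi^{\#_{ji}}$ in $1/x$.
\item Both or neither are scaled. The factor is independent of $\xi$.
\end{itemize}

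Now we collect exponents. The cross $\zeta$-factors contribute at most $\langle\mbf{k}_1,\mbf{n}'-\mbf{k}_2\rangle+\langle\mbf{n}-\mbf{k}_1,\mbf{k}_2\rangle^{\mathrm{t}}$, where the transposed term means $\sum (n-k_1)_i k_{2,j}\#_{ji}=\langle\mbf{k}_2,\mbf{n}-\mbf{k}_1\rangle$. The total exponent is then
\[
\mbf{m}\cdot\mbf{k}+\langle\mbf{k}_1,\mbf{n}-\mbf{k}_1\rangle+\langle\mbf{k}_2,\mbf{n}'-\mbf{k}_2\rangle+\langle\mbf{k}_1,\mbf{n}'-\mbf{k}_2\rangle+\langle\mbf{k}_2,\mbf{n}-\mbf{k}_1\rangle ,
\]
which equals $\mbf{m}\cdot\mbf{k}+\langle\mbf{k},\mbf{n}+\mbf{n}'-\mbf{k}\rangle$ by bilinearity. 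This is exactly the required bound, so each summand, and hence $F*G$, satisfies the slope $\leq\mbf{m}$ condition.

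The main obstacle is the bookkeeping of the $\zeta$-factor asymptotics in the cross terms. In particular one must check that the case where the scaled variable sits in the $G$-block contributes exponent $0$ and not $\#_{ji}$. This is where the normalisation of $\tilde\zeta_{ij}$ enters: its leading term $c_{ij}x^{\#_{ij}+\delta_{ij}}$ combined with the factor $(1-x)^{-\delta_{ij}}$ gives the stated behaviour at both $0$ and $\infty$. For the $\geq\mbf{m}$ case the roles flip. As $\xi\to0$, the factor with only $z_{ia}$ scaled is bounded by $\xi^{-\#_{ji}}$, and the factor with only $z_{jb}$ scaled tends to a constant. Summing gives $\mbf{m}\cdot\mbf{k}-\langle\mbf{n}+\mbf{n}'-\mbf{k},\mbf{k}\rangle$ by the same bilinearity identity.
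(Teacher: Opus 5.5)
The paper gives no proof of this lemma; it only says it is easy. Your direct count of $\xi$-degrees in each summand of the symmetrised shuffle product is the natural argument. Your reductions are fine: homogeneous elements suffice, the negative halves follow via the opposite product, and the interval case is an intersection of subalgebras. The exponent you finally display for the $\leq\mbf{m}$ case is also correct.

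However, the asymptotics you state for the cross factors are wrong exactly where you locate the main obstacle, and as written they contradict your own sum. Take $\xi\to\infty$ with only $z_{jb}$ (in the $G$-block) scaled. The argument $x=z_{ia}/(\xi z_{jb})$ tends to $0$, and $\zeta_{ij}(x)\sim c'_{ij}x^{-\#_{ji}}$ has a pole of order $\#_{ji}$ there. So the factor grows like $\xi^{\#_{ji}}$; it is not bounded. These $(n_i-k_{1,i})k_{2,j}$ pairs contribute $\langle\mbf{k}_2,\mbf{n}-\mbf{k}_1\rangle$. That is exactly the term your displayed sum needs to reach $\langle\mbf{k},\mbf{n}+\mbf{n}'-\mbf{k}\rangle$. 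Your claim that this case ``contributes exponent $0$ and not $\#_{ji}$'' is false, and the bookkeeping only closes with $\#_{ji}$.

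The same error appears in your $\geq\mbf{m}$ paragraph. As $\xi\to0$, the factor with only $z_{jb}$ scaled has argument $z_{ia}/(\xi z_{jb})\to\infty$. By the leading term $c_{ij}x^{\#_{ij}}$ (up to sign) it behaves like $\xi^{-\#_{ij}}$, not like a constant, and contributes $-\langle\mbf{n}-\mbf{k}_1,\mbf{k}_2\rangle$. The factors with only $z_{ia}$ scaled contribute $-\langle\mbf{n}'-\mbf{k}_2,\mbf{k}_1\rangle$. Add the lower bounds $\mbf{m}\cdot\mbf{k}_1-\langle\mbf{n}-\mbf{k}_1,\mbf{k}_1\rangle$ for $F$ and $\mbf{m}\cdot\mbf{k}_2-\langle\mbf{n}'-\mbf{k}_2,\mbf{k}_2\rangle$ for $G$. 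The total is exactly $\mbf{m}\cdot\mbf{k}-\langle\mbf{n}+\mbf{n}'-\mbf{k},\mbf{k}\rangle$. With your stated ``constant'' the total would be off by $\langle\mbf{n}-\mbf{k}_1,\mbf{k}_2\rangle$, so ``the same bilinearity identity'' would not give the required value; you would be asserting a false, stronger bound. Once these two asymptotic statements are corrected, your argument proves the lemma.
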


We will say that $F\in\mbf{U}^+_{Q}$ and $G\in\mbf{U}^-_{Q}$ have naive slope $\mbf{m}$ if:
\begin{align*}
&\text{vdeg }F\leq\mbf{m}\cdot\text{hdeg }F\\
&\text{vdeg }G\geq\mbf{m}\cdot\text{hdeg }G.
\end{align*}

The following proposition has been proved in \cite{N22}:

\begin{prop}
We have the following result for the action of the topological coproduct:
\begin{align*}
\Delta(\mbf{U}^{+}_{Q,\leq\mbf{m}})\subset\mbf{U}_{Q}^0\mbf{U}^+_{Q,\leq\mbf{m}}\otimes^{V}\mbf{U}_{Q}^+,\qquad\Delta(\mbf{U}^{-}_{Q,\leq\mbf{m}})\subset\mbf{U}^-_{Q}\otimes^{V}\mbf{U}_{Q,\leq\mbf{m}}^-\mbf{U}_{Q}^0
\end{align*}
where $\otimes^V$ is defined in \eqref{V-and-H-coproduct}.
\end{prop}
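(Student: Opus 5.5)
We treat the positive case directly from the shuffle formula for $\Delta$ in \eqref{Drinfeld-coproduct-definition}; the negative case is symmetric, with limits $\xi\to0$ in place of $\xi\to\infty$ and the Cartan currents $h^-$ placed on the right. Fix $F\in\mbf{U}^+_{Q,\leq\mbf{m}|\mbf{n},d}$ and a splitting $\mbf{0}\leq\mbf{k}\leq\mbf{n}$. The corresponding summand of $\Delta(F)$ is
\begin{align*}
\frac{\prod_{k_j<b\leq n_j}h_j^+(z_{jb})\,F(z_{\bullet,\leq k}\otimes z_{\bullet,>k})}{\prod_{a\leq k_i,\;b>k_j}\zeta_{ji}(z_{jb}/z_{ia})},
\end{align*}
and it is to be expanded in the region $|z_{ia}|\gg|z_{jb}|$, i.e. in nonnegative powers of $z_{jb}/z_{ia}$. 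This expansion is exactly what produces the infinite sums permitted by $\otimes^V$.

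We then regroup. We expand $h^+_j(z_{jb})=\sum_{d\geq0}h^+_{j,d}z_{jb}^{-d}$, and we expand $F$ together with the $\zeta^{-1}$ factors as a series in the second-factor variables $z_{\bullet,>k}$. The coefficient of each monomial in $z_{\bullet,>k}$ is a rational, color-symmetric function $F_1$ of the first-factor variables $z_{\bullet,\leq k}$, and the summand takes the form
\begin{align*}
\sum(\text{product of }h^+_{j,d})\,F_1\otimes F_2 ,
\end{align*}
where $F_2$ is a color-symmetric Laurent polynomial in $z_{\bullet,>k}$. That the $F_1$ and $F_2$ actually lie in the shuffle algebra $\mc{S}^+$ is the coassociativity and compatibility of $i^{+}$ with $\Delta$ already stated above, so we take it as given. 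The right factor lies in $\mbf{U}^+_Q$ with no condition, so the task reduces to showing that each $F_1$ has slope $\leq\mbf{m}$.

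To do this, fix $\mbf{0}\leq\mbf{l}\leq\mbf{k}$ and rescale $z_{i1},\dots,z_{il_i}\mapsto\xi z_{i\cdot}$ among the first-factor variables only. We track growth as $\xi\to\infty$. The hypothesis on $F$, applied with the splitting $\mbf{l}$ inside the full set of $\mbf{n}$ variables, gives growth at most $\xi^{\mbf{m}\cdot\mbf{l}+\langle\mbf{l},\mbf{n}-\mbf{l}\rangle}$. The denominator factors $\zeta_{ji}(z_{jb}/\xi z_{ia})$ with $a\leq l_i$ tend to $\zeta_{ji}(0)$-type behaviour, i.e. $\zeta_{ji}(x)\sim c'_{ji}x^{-\#_{ij}}$ as $x\to0$. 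Their inverses therefore contribute a factor $\xi^{-\langle\mbf{l},\mbf{n}-\mbf{k}\rangle}$ relative to $F$'s bound, since the exponent of $\#_{ij}$ over pairs $(a\leq l_i,\,b>k_j)$ is exactly $\langle\mbf{l},\mbf{n}-\mbf{k}\rangle$. The net exponent is then $\mbf{m}\cdot\mbf{l}+\langle\mbf{l},\mbf{k}-\mbf{l}\rangle$, which is precisely the bound defining slope $\leq\mbf{m}$ for an element of degree $\mbf{k}$.

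The main obstacle is justifying that this leading-order estimate passes to each individual coefficient $F_1$ after expansion in $z_{\bullet,>k}$. Taking the limit in $\xi$ does not obviously commute with extracting a coefficient of a power series in the second-factor variables. I would handle this by noting two facts. First, the rescaling touches only first-factor variables, while the expansion is in the ratios $z_{jb}/z_{ia}$, and these only become smaller as $z_{ia}$ grows. Second, for each fixed total degree the relevant coefficient is a finite contour integral (constant term) in the second-factor variables. The $\xi$-asymptotics of the integrand are uniform on the torus $|z_{jb}|=\epsilon$, so the bound holds coefficientwise. Finally, the Cartan prefactor $\prod h^+_j$ is placed in $\mbf{U}^0_Q$ on the left. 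This gives $\Delta(F)\in\mbf{U}^0_Q\mbf{U}^+_{Q,\leq\mbf{m}}\otimes^V\mbf{U}^+_Q$.
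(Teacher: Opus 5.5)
There is a genuine gap, and it is in your first step: the expansion region is reversed. The summands of $\Delta(F)$ have to be expanded in $|z_{ia}|\ll|z_{jb}|$, i.e.\ in nonnegative powers of $z_{ia}/z_{jb}$, so that the first-factor variables are small. Three things force this:
\begin{itemize}
\item $\otimes^V$ requires the first-factor vertical degree to go to $+\infty$, which is what the modes $h^+_{j,d}$ already do.
\item $\Delta(e_i(z))=h_i^+(z)\otimes e_i(z)+e_i(z)\otimes 1$ demands it once you reorder $e_i(x)h^+_j(y)$ via the $h^+e$ relation.
\item It is the regime in which $\Delta_{\mbf{m}}$ is read off, by sending the second-factor variables to infinity.
\end{itemize}
In your region the $h^+_{j,d}$ raise the first-factor degree while the $\zeta^{-1}$-expansion lowers it without bound, so the sum is not in $\otimes^V$.

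With the correct expansion, rescaling first-factor variables by $\xi\to\infty$ leaves the region of expansion. Each $\zeta_{ji}(z_{jb}/z_{ia})^{-1}$ then contributes arbitrarily high powers of $z_{ia}$, and the growth bound for the rational function does not pass to coefficients. This is not a technicality: the inclusion as printed is false.

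Take $U_q(L\mf{sl}_2)$ with $\zeta(x)=\frac{x-q^{-2}}{x-1}$, so $\#_{ii}=0$. The element $e_0e_0\mapsto\zeta(z_1/z_2)+\zeta(z_2/z_1)=1+q^{-2}$ has slope $\leq 0$, but its $\mbf{k}=1$ summand
\begin{align*}
(1+q^{-2})\,h^+(z_2)\,\frac{1-z_1/z_2}{1-q^{-2}z_1/z_2}
\end{align*}
contains $(q^{-4}-1)\,h^+_0e_1\otimes e_{-1}$. Algebraically, this comes from $e_0h^+_1=q^{-2}h^+_1e_0+(q^{-4}-1)h^+_0e_1$ inside $\Delta(e_0)^2$. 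Yet $e_1\notin\mbf{U}^+_{Q,\leq 0}$. Likewise $\Delta(f_0f_0)$ contains $(q^4-1)\,f_1\otimes f_{-1}h^-_0$, with $f_{-1}\notin\mbf{U}^-_{Q,\leq 0}$.

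The paper gives no argument here; it only cites \cite{N22}. What is true has the tensor factors swapped:
\begin{align*}
\Delta(\mbf{U}^+_{Q,\leq\mbf{m}})\subset\mbf{U}^0_Q\mbf{U}^+_Q\otimes^V\mbf{U}^+_{Q,\leq\mbf{m}},\qquad\Delta(\mbf{U}^-_{Q,\leq\mbf{m}})\subset\mbf{U}^-_{Q,\leq\mbf{m}}\otimes^V\mbf{U}^-_Q\mbf{U}^0_Q.
\end{align*}
This is also the version the paper actually uses later, when it asserts that $a_{2,s}$ and $b_{1,t}$ have naive slope $\leq\mbf{m}$.

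Your degree count proves this corrected statement once it is aimed at the right factor. Rescale $\mbf{l}\leq\mbf{n}-\mbf{k}$ of the second-factor variables by $\xi\to\infty$. Since the expansion is now in $z_{ia}/(\xi z_{jb})$, every term is homogeneous in $\xi$, so the count is purely formal and your ``main obstacle'' disappears. The contributions are:
\begin{itemize}
\item $F$ gives $\xi$-degree at most $\mbf{m}\cdot\mbf{l}+\langle\mbf{l},\mbf{n}-\mbf{l}\rangle$.
\item Each $\zeta_{ji}(\xi z_{jb}/z_{ia})^{-1}$ has top $\xi$-degree $-\#_{ji}$ with only lower powers after it, so together these give at most $-\langle\mbf{l},\mbf{k}\rangle$.
\item The modes of $h^+_j(\xi z_{jb})$ only lower the degree.
\end{itemize}
The net bound $\mbf{m}\cdot\mbf{l}+\langle\mbf{l},\mbf{n}-\mbf{k}-\mbf{l}\rangle$ is exactly slope $\leq\mbf{m}$ in degree $\mbf{n}-\mbf{k}$. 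For $\mbf{U}^-$ the same count works on the first factor with $\xi\to0$, which is again aligned with the expansion.
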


\begin{defn}
For any $\mbf{m}\in\mbb{R}^{I}$, we define the \textbf{slope subalgebra} $\mc{B}_{\mbf{m}}^{\pm}$ as the subalgebra of $\mbf{U}_{Q}^{\pm}$ written as:
\begin{align*}
\mc{B}_{\mbf{m}}^{\pm}=\bigoplus_{\mbf{n}\in\mbb{N}^I}\mc{B}_{\mbf{m}|\pm\mbf{n}}^{\pm}=\bigoplus_{\mbf{n}\in\mbb{N}^I}^{\mbf{m}\cdot\mbf{n}\in\mbb{Z}}\mbf{U}_{Q,\leq\mbf{m}|\pm\mbf{n},\pm\mbf{m}\cdot\mbf{n}}^{\pm}.
\end{align*}
\end{defn}

Now we define the vector spaces:
\begin{align*}
\mc{B}_{\mbf{m}}^{\geq}=\mc{B}_{\mbf{m}}^{+}\otimes\mbb{K}[h_{i,0}^{+}]_{i\in I}\\
\mc{B}_{\mbf{m}}^{\leq}=\mc{B}_{\mbf{m}}^{-}\otimes\mbb{K}[h_{i,0}^{-}]_{i\in I}.
\end{align*}
with the following commutating relations:
\begin{align*}
h_{i,0}^+X=Xh_{i,0}^+\gamma_{\mbf{e}_i,\text{hdeg }X},\qquad h_{i,0}^-X=Xh_{i,0}^{-}\gamma_{\text{hdeg }X,\mbf{e}_i}^{-1}
\end{align*}
and here $\gamma_{\mbf{m},\mbf{n}}$ is defined in \eqref{important-constant}.

The algebra $\mc{B}_{\mbf{m}}^{\geq,\leq}$ also have the coproduct structure $\Delta_{\mbf{m}}$, and it is defined as follows: For the Cartan elements:
\begin{align*}
\Delta_{\mbf{m}}(h_{i,0}^{\pm})=h_{i,0}^{\pm}\otimes h_{i,0}^{\pm}
\end{align*}
and for any $F\in\mc{B}_{\mbf{m}|\mbf{n}}$ and $G\in\mc{B}_{\mbf{m}|-\mbf{n}}$:
\begin{equation}\label{slope-m-coproduct-slope-subalgebra}
\begin{aligned}
&\Delta_{\mbf{m}}(F)=\sum_{\mbf{0}\leq\mbf{k}\leq\mbf{n}}\lim_{\xi\rightarrow\infty}\frac{h_{\mbf{n}-\mbf{k}}F(\cdots,z_{i1},\cdots,z_{ik_i}\otimes\xi z_{i,k_{i}+1},\cdots,\xi z_{in_i},\cdots)}{\xi^{\mbf{m}\cdot(\mbf{n}-\mbf{k})}\text{lead}[\prod_{1\leq a\leq k_i}^{i\in I}\prod_{k_j<b\leq n_j}^{j\in I}\zeta_{ji}(\xi z_{jb}/z_{ia})]}\\
&\Delta_{\mbf{m}}(G)=\sum_{\mbf{0}\leq\mbf{k}\leq\mbf{n}}\lim_{\xi\rightarrow\infty}\frac{F(\cdots,\xi z_{i1},\cdots,\xi z_{ik_i}\otimes z_{i,k_{i}+1},\cdots, z_{in_i},\cdots)h_{-\mbf{k}}}{\xi^{-\mbf{m}\cdot\mbf{k}}\text{lead}[\prod_{1\leq a\leq k_i}^{i\in I}\prod_{k_j<b\leq n_j}^{j\in I}\zeta_{ji}(\xi z_{ia}/z_{jb})]}.
\end{aligned}
\end{equation}

Here we use the notation:
\begin{align*}
h_{\pm\mbf{n}}=\prod_{i\in I}h_{i,0}^{\pm n_i}.
\end{align*}

Now with respect to the bialgebra structure above the restriction of the nondegenerate pairing in \eqref{nondegenerate-bialgebra-pairing}
\begin{align}\label{slope-subalgebra-pairing}
\mc{B}_{\mbf{m}}^{\geq}\otimes\mc{B}_{\mbf{m}}^{\leq}\rightarrow\mbb{K}
\end{align}
the bialgebra pairing implies that we can define the Drinfeld double:
\begin{align*}
\mc{B}_{\mbf{m}}:=\mc{B}_{\mbf{m}}^{\geq}\otimes\mc{B}_{\mbf{m}}^{\leq}
\end{align*}
and this is a Hopf algebra with a natural universal $R$-matrix $q^{\Omega}R_{\mbf{m}}^+\in\mc{B}_{\mbf{m}}^{\geq}\hat{\otimes}\mc{B}_{\mbf{m}}^{\leq}$ induced by the bialgebra pairing \eqref{slope-subalgebra-pairing}. Here $q^{\Omega}$ is defined in \eqref{Cartan-R-matrix}.

We define $q^{\Omega}R_{\mbf{m}}^-:=(q^{\Omega}R_{\mbf{m}}^+)_{21}\in\mc{B}_{\mbf{m}}^{\leq}\hat{\otimes}\mc{B}_{\mbf{m}}^{\geq}$ to be the opposite of the universal $R$-matrix $R_{\mbf{m}}^+$. It is easy to check that $(q^{\Omega}R_{\mbf{m}}^-)^{-1}\in\mc{B}_{\mbf{m}}^{\leq}\hat{\otimes}\mc{B}_{\mbf{m}}^{\geq}$ is also the universal $R$-matrix with respect to the coproduct $\Delta_{\mbf{m}}$.

\begin{prop}
The inclusion map $\mc{B}_{\mbf{m}}\subset\mbf{U}_{Q}$ is an algebra monomorphism
\end{prop}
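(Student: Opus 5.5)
To show that $\mc{B}_{\mbf{m}}\subset\mbf{U}_{Q}$ is an algebra monomorphism, we split the statement into two parts. First, the natural linear map $\iota:\mc{B}_{\mbf{m}}=\mc{B}^{\geq}_{\mbf{m}}\otimes\mc{B}^{\leq}_{\mbf{m}}\rightarrow\mbf{U}_{Q}$, $x\otimes y\mapsto xy$, is an algebra homomorphism. Second, it is injective. On each half the map is the inclusion of a subalgebra. $\mc{B}^{\pm}_{\mbf{m}}$ is a subalgebra of $\mbf{U}^{\pm}_{Q,\leq\mbf{m}}$, because slope $\leq\mbf{m}$ is preserved under shuffle product and the degree condition $d=\mbf{m}\cdot\mbf{n}$ is additive. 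The relations $h_{i,0}^{\pm}X=Xh_{i,0}^{\pm}\gamma^{\pm1}$ are the $z\to\infty$ (resp. $z\to 0$) leading terms of the $h^{\pm}_i(z)$ commutation relations in $\mbf{U}_{Q}$. Indeed, $\lim_{x\to\infty}\zeta_{ij}(x)/\zeta_{ji}(x^{-1})=(-1)^{\delta_{ij}}c_{ij}/c'_{ji}$, which is exactly the factor in $\gamma_{\mbf{e}_i,\mbf{e}_j}$. So $\mc{B}^{\geq}_{\mbf{m}}\subset\mbf{U}^{\geq}_{Q}$ and $\mc{B}^{\leq}_{\mbf{m}}\subset\mbf{U}^{\leq}_{Q}$ are subalgebras.

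The key step is compatibility of the two Drinfeld double structures. The cross relation in $\mc{B}_{\mbf{m}}$ is determined by $\Delta_{\mbf{m}}$ and the restricted pairing \eqref{slope-subalgebra-pairing}. In $\mbf{U}_{Q}$ it is determined by $\Delta$ and \eqref{nondegenerate-bialgebra-pairing}. We need to show that for $x'\in\mc{B}^{\geq}_{\mbf{m}}$ and $y\in\mc{B}^{\leq}_{\mbf{m}}$, the formula $\sum\langle S(x'_1),y_1\rangle x'_2\otimes y_2\langle x'_3,y_3\rangle$ gives the same result with either coproduct. The plan is to write $\Delta(F)=\Delta_{\mbf{m}}(F)+(\text{rest})$ for $F\in\mc{B}^{+}_{\mbf{m}|\mbf{n}}$. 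By the proposition $\Delta(\mbf{U}^{+}_{Q,\leq\mbf{m}})\subset\mbf{U}^{0}_{Q}\mbf{U}^{+}_{Q,\leq\mbf{m}}\otimes^{V}\mbf{U}^{+}_{Q}$, the first tensor factors have slope $\leq\mbf{m}$. So their vertical degree is at most $\mbf{m}\cdot\mbf{k}$, with equality only for the leading $\xi^{\mbf{m}\cdot(\mbf{n}-\mbf{k})}$ term. That term is exactly $\Delta_{\mbf{m}}(F)$, with $h^+_j(\xi z)\to h_{j,0}^+$. The remaining terms have first factor of naive slope strictly less than $\mbf{m}$, or involve $h^{+}_{i,d}$ with $d>0$. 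We then show that such terms pair to zero against $\mc{B}^{\leq}_{\mbf{m}}$. The pairing is homogeneous: $\langle e_{i,d},f_{j,k}\rangle=\delta_{ij}\delta_{d+k,0}$, and slope $\leq\mbf{m}$ elements of $\mbf{U}^{-}$ have degree constrained so that only degree $-\mbf{m}\cdot\mbf{k}$ pieces can pair nontrivially. Here one uses the orthogonality of different slope pieces, e.g. $\langle\mbf{U}^{+}_{Q,<\mbf{m}},\mc{B}^{-}_{\mbf{m}}\rangle=0$ in matching degrees, which follows from the integral formulas in Definition \ref{bialgebra-pairing-for-the-large-algebra:label} by moving contours. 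Likewise the $h^{\pm}_{i,d}$, $d\neq0$, pair trivially with degree-zero Cartan parts. Thus the cross relations agree, and $\iota$ is a homomorphism. The main obstacle is this vanishing of the non-leading coproduct terms under the pairing. If the direct contour argument is messy, we would instead invoke the factorization $\mbf{U}^{\pm}_{Q}\cong\bigotimes_{\mbf{m}'}\mc{B}^{\pm}_{\mbf{m}'}$ along a line from \cite{N22,N26}, together with the fact that the pairing respects it.

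Injectivity follows from the triangular decomposition $\mbf{U}_{Q}\cong\mbf{U}^{\geq}_{Q}\otimes\mbf{U}^{\leq}_{Q}/(h^+_{i,0}h^-_{i,0}=1)$ as vector spaces, which holds by the construction of a Drinfeld double. The image of $\mc{B}^{\geq}_{\mbf{m}}\otimes\mc{B}^{\leq}_{\mbf{m}}$ is the tensor product of subspaces $\mc{B}^{\pm}_{\mbf{m}}\subset\mbf{U}^{\pm}_{Q}$ and $\mbb{K}[h^{\pm}_{i,0}]\subset\mbb{K}[h^{\pm}_{i,d}]$. This is injective, compatibly with the same quotient $h^+_{i,0}h^-_{i,0}=1$ imposed on both sides.
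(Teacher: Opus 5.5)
Your strategy is the paper's own, which follows Negu\c{t}. For $a\in\mc{B}^{\geq}_{\mbf{m}}$ and $b\in\mc{B}^{\leq}_{\mbf{m}}$, one shows that the double relation $\sum a_1b_1\langle a_2,b_2\rangle=\sum b_2a_2\langle a_1,b_1\rangle$ computed with $\Delta$ in $\mbf{U}_Q$ collapses to the one computed with $\Delta_{\mbf{m}}$. This holds because the pairing is homogeneous in $(\text{hdeg},\text{vdeg})$ and the coproduct factors obey one-sided naive-slope bounds. Your checks that the halves are subalgebras, and that injectivity follows from the triangular decomposition, are correct extras the paper leaves implicit. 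No contour-moving or factorization argument is needed; degree counting alone does it.

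The step that fails as written is the inequality you feed in. With the paper's formula for $\Delta$, $h^+_j(z_{jb})$ is expanded in $z_{jb}^{-1}$ and $1/\zeta_{ji}(z_{jb}/z_{ia})$ in powers of $z_{ia}/z_{jb}$. So the non-leading terms \emph{raise} the degree of the first factor.

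For example, in $U_q(L\mf{sl}_2)$ with $m\in\mbb{Z}$, the element $F=z_1^mz_2^m\in\mc{B}^+_m$ has $\mbf{k}=1$ part $\sum_{j,l\geq0}c_l\,h^+_{j}e_{m+l}\otimes e_{m-l-j}$, with all $c_l\neq0$. The first factor therefore has $\text{vdeg}\geq m$, with equality only for $j=l=0$, and $e_{m+l}$ is not of slope $\leq m$. The displayed inclusion $\Delta(\mbf{U}^+_{Q,\leq\mbf{m}})\subset\mbf{U}^0_Q\mbf{U}^+_{Q,\leq\mbf{m}}\otimes^V\mbf{U}^+_Q$ has the factors swapped, and the paper's proof in fact uses $a_{2,s}$ and $b_{1,t}$.

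The correct input is that the Cartan-free factors satisfy $\text{vdeg}\,a_2\leq\mbf{m}\cdot\text{hdeg}\,a_2$ and $\text{vdeg}\,b_1\geq\mbf{m}\cdot\text{hdeg}\,b_1$. By additivity, $\text{vdeg}\,a_1\geq\mbf{m}\cdot\text{hdeg}\,a_1$ and $\text{vdeg}\,b_2\leq\mbf{m}\cdot\text{hdeg}\,b_2$. Since $\langle x,y\rangle\neq0$ forces $\deg x=-\deg y$, each of $\langle a_1,b_1\rangle$ and $\langle a_2,b_2\rangle$ vanishes unless equality holds. This selects exactly the $\Delta_{\mbf{m}}$ terms.

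The same count kills the terms containing $h^{\pm}_{i,d}$ with $d\neq0$. Your stated reason for those (pairing against degree-zero Cartan parts) does not apply, because the factors of $\Delta(b)$ contain $h^-_{i,d}$ of all degrees.
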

\begin{proof}
The proof is similar to Proposition 3.10 in \cite{N22}. We consider any $a\in\mc{B}_{\mbf{m}}^{\geq}$ and $b\in\mc{B}_{\mbf{m}}^{\leq}$. Let us write
\begin{align*}
\Delta(a)=\sum_{s\in S}a_{1,s}\otimes a_{2,s},\qquad\Delta(b)=\sum_{t\in T}b_{1,t}\otimes b_{2,t}
\end{align*}
for some sets $S$ and $T$. Let us also write:
\begin{align*}
\Delta_{\mbf{m}}(a)=\sum_{s\in S'}a_{1,s}\otimes a_{2,s},\qquad\Delta_{\mbf{m}}(b)=\sum_{t\in T'}b_{1,t}\otimes b_{2,t}
\end{align*}
where $S'\subset S$ and $T'\subset T$ such that for $s\in S'$ and $t\in T'$ such that:
\begin{equation*}
\begin{aligned}
&\text{vdeg }a_{1,s}=\mbf{m}\cdot(\text{hdeg }a_{1,s})\Leftrightarrow\text{vdeg }a_{2,s}=\mbf{m}\cdot(\text{hdeg }a_{2,s})\\
&\text{vdeg }b_{1,t}=\mbf{m}\cdot(\text{hdeg }b_{1,t})\Leftrightarrow\text{vdeg }b_{2,t}=\mbf{m}\cdot(\text{hdeg }b_{2,t})
\end{aligned}
\end{equation*}

We also have the following relations:
\begin{align*}
\sum_{s\in S,t\in T}a_{1,s}b_{1,t}\langle a_{2,s},b_{2,t}\rangle=\sum_{s\in S,t\in T}b_{2,t}a_{2,s}\langle a_{1,s},b_{1,t}\rangle
\end{align*}
and since $a_{2,s}$ and $b_{1,t}$ have naive slope $\leq\mbf{m}$, for all $s\in S$ and $t\in T$. This implies that:
\begin{equation}\label{naive-slope-degree-comparison}
\begin{aligned}
&\text{vdeg }a_{2,s}\leq\mbf{m}\cdot(\text{hdeg }a_{2,s})\Rightarrow\text{vdeg }a_{1,s}\geq\mbf{m}\cdot(\text{hdeg }a_{1,s})\\
&\text{vdeg }b_{1,t}\geq\mbf{m}\cdot(\text{hdeg }b_{1,t})\Rightarrow\text{vdeg }b_{2,t}\leq\mbf{m}\cdot(\text{hdeg }b_{2,t})
\end{aligned}
\end{equation}
We know that the pairing is nonzero only if they have the equality on \ref{naive-slope-degree-comparison}. Therefore \ref{naive-slope-degree-comparison} can be reduced to:
\begin{align*}
\sum_{s\in S',t\in T'}a_{1,s}b_{1,t}\langle a_{2,s},b_{2,t}\rangle=\sum_{s\in S',t\in T'}b_{2,t}a_{2,s}\langle a_{1,s},b_{1,t}\rangle.
\end{align*}
Thus the proof is finished.
\end{proof}

The shift automorphism restricts to the following isomorphism:
\begin{equation*}
\begin{tikzcd} 
\tau_{\mbf{k}}:\mc{B}_{\mbf{m}}\arrow[r,"\cong"]&\mc{B}_{\mbf{m}+\mbf{k}}.
\end{tikzcd}
\end{equation*}

We also define the slope $\infty$ subalgebra by the following:
\begin{align*}
\mc{B}_{\infty}^{\geq}=\mbb{K}[h_{i,d}^{+}]_{i\in I,d\geq0},\qquad\mc{B}_{\infty}^{\leq}=\mbb{K}[h_{i,d}^{-}]_{i\in I,d\leq0}
\end{align*}
and the coproduct $\Delta_{\infty}$ is defined as the restriction of $\Delta$ to $\mc{B}^{\geq}_{\infty}$ and $\mc{B}^{\leq}_{\infty}$.

In this paper we also assume that the bialgebra pairing
\begin{align*}
\langle-,-\rangle:\mc{B}_{\infty}^{\geq}\otimes\mc{B}_{\infty}^{\leq}\rightarrow\mbb{K}
\end{align*}
is nondegenerate. This condition is generally true for the case that we are concerning about.

\subsubsection{Slope factorizations}
One of the main property for the quantum loop group is the slope factorization. 

We denote a parametrised curve $\bm{\mu}:\mbb{R}\rightarrow\mbb{R}^I$ as the catty-corner curve (See \cite{N26}) if 
\begin{align*}
t_1<t_2\text{ implies }\mu_i(t_1)<\mu_i(t_2)
\end{align*}
and
\begin{align*}
\lim_{t\rightarrow\pm\infty}\mu_i(t)=\pm\infty
\end{align*}

For the proof of the following theorem, one can refer to \cite{N22}\cite{N26}:
\begin{thm}
For arbitrary catty-corner curve $\bm{\mu}:\mbb{R}\rightarrow\mbb{R}^I$ in $\mbb{R}^I$, there is an isomorphism:
\begin{align*}
\mbf{U}_{Q}^{\pm}\cong\bigotimes^{\rightarrow}_{t\in\mbb{R}}\mc{B}^{\pm}_{\bm{\mu}(t)}
\end{align*}
given by the multiplication, where $\rightarrow$ means that we take the tensor product in increasing order of $t$. This isomorphism preserves the pairing \eqref{nondegenerate-bialgebra-pairing} in the sense that:
\begin{align*}
\langle\prod_{t\in\mbb{R}}^{\rightarrow}E_t,\prod_{t\in\mbb{R}}^{\rightarrow}F_t\rangle=\prod_{t\in\mbb{R}}\langle E_t,F_t\rangle
\end{align*}
\end{thm}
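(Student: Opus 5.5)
We prove the factorization for $\mbf{U}^+_{Q}$; the case of $\mbf{U}^-_{Q}$ follows by the symmetric argument, or by applying the antiautomorphism exchanging $\mc{S}^+$ and $\mc{S}^-$. The strategy follows \cite{N22,N26} and rests on three ingredients:
\begin{itemize}
\item a triangularity property of the pairing \eqref{nondegenerate-bialgebra-pairing} with respect to slopes;
\item a dimension count in each graded piece $\mbf{U}^+_{Q,\mbf{n},d}$;
\item non-degeneracy of the pairing restricted to each slope subalgebra.
\end{itemize}
Since $\mbf{U}^{\pm}_{Q,\mbf{n},d}$ may be infinite dimensional, we always work inside a fixed horizontal degree $\mbf{n}$. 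We then use the filtration by $\mbf{U}^+_{Q,\leq \bm{\mu}(t)}$ together with the finite-dimensionality of $\mbf{U}^+_{Q,\leq\mbf{m}|\mbf{n},d}$.

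\textbf{Step 1: finiteness of the ordered product.} Fix $(\mbf{n},d)$. Only finitely many $t$ contribute to a product $\prod^{\rightarrow}_t E_t$ of total degree $(\mbf{n},d)$. Indeed, $\mc{B}^+_{\bm{\mu}(t)|\mbf{k}}$ is nonzero only when $\bm{\mu}(t)\cdot\mbf{k}\in\mbb{Z}$. Because $\bm{\mu}$ is catty-corner, each map $t\mapsto\bm{\mu}(t)\cdot\mbf{k}$ with $\mbf{0}<\mbf{k}\leq\mbf{n}$ is strictly increasing, so such $t$ form a discrete set. Combined with the vertical degree constraint and the bounded-below vertical degree of elements of slope $\leq\mbf{m}$, this leaves finitely many admissible factorizations $\mbf{n}=\sum\mbf{k}_t$ with $\sum\bm{\mu}(t)\cdot\mbf{k}_t=d$. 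Hence the multiplication map is well defined on the restricted tensor product.

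\textbf{Step 2: factorization of the pairing.} Take $E_t\in\mc{B}^+_{\bm{\mu}(t)}$ and $F_t\in\mc{B}^-_{\bm{\mu}(t)}$. We compute $\langle\prod^{\rightarrow}E_t,\prod^{\rightarrow}F_t\rangle$ using the bialgebra property, which iterates the coproduct on one side. The key input is the coproduct estimate: $\Delta(\mbf{U}^+_{\leq\mbf{m}})\subset\mbf{U}^0\mbf{U}^+_{\leq\mbf{m}}\otimes^V\mbf{U}^+$. Taking leading terms in $\xi$ as in \eqref{slope-m-coproduct-slope-subalgebra}, the slope conditions force every cross term to have naive-slope mismatch, exactly as in the proof of the monomorphism $\mc{B}_{\mbf{m}}\subset\mbf{U}_Q$ via \eqref{naive-slope-degree-comparison}. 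Those cross terms therefore pair to zero, and only the diagonal pairings $\langle E_t,F_t\rangle$ survive. This gives the product formula. As a consequence, the multiplication map is injective, because the pairing on each $\mc{B}_{\bm{\mu}(t)}$, being a restriction of the nondegenerate pairing, is nondegenerate.

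\textbf{Step 3: surjectivity.} We prove by induction on $|\mbf{n}|$ that every $F\in\mbf{U}^+_{Q,\mbf{n},d}$ lies in the span of the ordered products. Choose the largest $t$ for which $F$ fails to have slope $\leq\bm{\mu}(t)$ in some $\mbf{k}$-direction. Subtract a product that matches the leading $\xi$-asymptotics of $F$: the leading coefficient in the relevant direction is governed by $\Delta_{\bm{\mu}(t)}$ and lands in $\mc{B}^+_{\bm{\mu}(t)}\otimes\mbf{U}^+_{<\bm{\mu}(t)}$. This strictly lowers either the slope or the degree, and Step 1 guarantees termination. Alternatively, a dimension comparison suffices: $\dim\mbf{U}^+_{\leq\mbf{m}|\mbf{n},d}$ equals the corresponding graded dimension of $\bigotimes_{t:\bm{\mu}(t)\leq\mbf{m}}\mc{B}^+_{\bm{\mu}(t)}$, and taking $\mbf{m}\to\infty$ along $\bm{\mu}$ exhausts $\mbf{U}^+$. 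I expect this step to be the main obstacle, since it is where the genuine shuffle-algebra analysis of \cite{N22,N26} is required. I would first try the leading-term subtraction argument, controlling the limits $\xi\to\infty$ in the definition of slope.
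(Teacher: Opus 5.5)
\textbf{Verdict: genuine gaps.} The paper gives no argument of its own for this theorem; it only refers to Negu\c{t}'s work. Your sketch therefore has to stand alone, and as written it has two genuine gaps.

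\textbf{Step 2 is circular.} You deduce injectivity from nondegeneracy of the pairing on each $\mc{B}^{\pm}_{\bm{\mu}(t)}$, justified by its ``being a restriction of the nondegenerate pairing''. That inference is invalid: the restriction of a nondegenerate pairing to a pair of subspaces can be degenerate. Here this is the real issue, since $\mc{B}^+_{\mbf{m}|\mbf{n}}$ sits inside the (in general infinite-dimensional) space $\mbf{U}^+_{Q,\mbf{n},\mbf{m}\cdot\mbf{n}}$. Nondegeneracy on $\mc{B}_{\bm{\mu}(t)}$ normally comes \emph{out of} the factorization. Your product formula is the right statement, and it makes any $x\in\mc{B}^+_{\bm{\mu}(t_0)}$ with $x\perp\mc{B}^-_{\bm{\mu}(t_0)}$ orthogonal to every ordered product. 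Only surjectivity onto $\mbf{U}^-_Q$ then lets you conclude $x=0$. So surjectivity, for both signs, must be proved before injectivity.

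\textbf{Step 1 is false as stated.} For fixed $(\mbf{n},d)$ there are infinitely many admissible factorizations. Take one vertex and $\bm{\mu}(t)=t$: the ordered products $e_{i,-N}e_{i,N}$, $N\geq1$, all lie in degree $(2\mbf{e}_i,0)$. Finiteness holds only after capping the slope at some $\bm{\mu}(t_1)$. Then each $\bm{\mu}(t)\cdot\mbf{k}_t$ is bounded \emph{above} by $\bm{\mu}(t_1)\cdot\mbf{k}_t$, and hence, via $\sum_t\bm{\mu}(t)\cdot\mbf{k}_t=d$, also below. Everything, including your termination claim, must therefore be run in the finite-dimensional pieces $\mbf{U}^+_{Q,\leq\bm{\mu}(t_1)|\mbf{n},d}$.

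\textbf{Step 3 is where the theorem actually lives, and it is not yet a proof.} The dimension comparison only restates what is to be proved. The leading-term subtraction lacks the mechanism that makes ``matching the asymptotics'' possible. Set $M=\mbf{U}^+_{Q,\leq\mbf{m}}\otimes\mbb{K}[h^{\pm}_{i,0}]$. With the paper's conventions, $\Delta_{\mbf{m}}(F)$ lies in $M\otimes\mc{B}^{\geq}_{\mbf{m}}$. The slope-$\mbf{m}$ factor is on the right; your $\mc{B}^+_{\bm{\mu}(t)}\otimes\mbf{U}^+_{Q,<\bm{\mu}(t)}$ reverses this and would produce the wrong order of factors. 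The left factors are a priori only of slope $\leq\mbf{m}$, not $<\mbf{m}$.

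The missing facts are these:
\begin{enumerate}
\item $\Delta_{\mbf{m}}$ is an algebra map and coassociative on $M$. It is the part of $\Delta$ whose right factor has defect $\mbf{m}\cdot\text{hdeg}-\text{vdeg}=0$, and this defect is additive and nonnegative on all right factors of $\Delta(M)$.
\item Its coinvariants $\{F:\Delta_{\mbf{m}}(F)=F\otimes1\}$ are exactly $\mbf{U}^+_{Q,<\mbf{m}}$, because vanishing of all leading coefficients is a strict drop of slope.
\end{enumerate}
Then $M$ is a $\mc{B}^{\geq}_{\mbf{m}}$-Hopf module, and the fundamental theorem of Hopf modules (using $S_{\mbf{m}}$) applies. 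Concretely, you can instead organize your subtraction as a descending induction on the right-hand degree $\mbf{k}$. Coassociativity shows that for maximal $\mbf{k}$ the left factors are already coinvariant, up to the Cartan factor $h_{\mbf{k}}$. Either way, multiplication $\mbf{U}^+_{Q,<\mbf{m}}\otimes\mc{B}^+_{\mbf{m}}\to\mbf{U}^+_{Q,\leq\mbf{m}}$ is bijective. This gives injectivity and surjectivity at each wall with no nondegeneracy input. Iterating over the finitely many walls met in a fixed filtered piece gives the factorization. Only after that do the product formula and global nondegeneracy give nondegeneracy on each $\mc{B}_{\bm{\mu}(t)}$.
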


Similarly we also we have that:
\begin{align*}
\mbf{U}_{Q,\leq\bm{\mu}(t_1)}^{\pm}\cong\bigotimes^{\rightarrow}_{t\leq t_1}\mc{B}_{\bm{\mu}(t)}^{\pm},\qquad\mbf{U}_{Q,\geq\bm{\mu}(t_1)}^{\pm}\cong\bigotimes^{\rightarrow}_{t\geq t_1}\mc{B}_{\bm{\mu}(t)}^{\pm},\qquad\mbf{U}_{Q,[\bm{\mu}(t_1),\bm{\mu}(t_2)]}^{\pm}\cong\bigotimes^{\rightarrow}_{t\in[t_1,t_2]}\mc{B}_{\bm{\mu}(t)}^{\pm}
\end{align*}

Moreover, the restriction of the nondegenerate bialgebra pairing \eqref{nondegenerate-bialgebra-pairing} will give the nondegenerate bialgebra pairing:
\begin{equation*}
\begin{aligned}
&\mbf{U}_{Q,\leq\bm{\mu}(t_1)}^{+}\otimes\mbf{U}_{Q,\leq\bm{\mu}(t_1)}^{-}\rightarrow\mbb{K},\qquad\mbf{U}_{Q,\geq\bm{\mu}(t_1)}^{+}\otimes\mbf{U}_{Q,\geq\bm{\mu}(t_1)}^{-}\rightarrow\mbb{K}\\
&\mbf{U}_{Q,[\bm{\mu}(t_1),\bm{\mu}(t_2)]}^+\otimes\mbf{U}_{Q,[\bm{\mu}(t_1),\bm{\mu}(t_2)]}^-\rightarrow\mbb{K}
\end{aligned}
\end{equation*}

These nondegenerate bialgebra pairing admits the following universal $R$-matrix:
\begin{align}\label{factorization-of-R-matrices}
R^{\leq\bm{\mu}(t_1)}=\prod_{t\leq t_1}^{\rightarrow}R^{+}_{\bm{\mu}(t)},\qquad R^{\geq\bm{\mu}(t_1)}=\prod_{t\geq t_1}^{\rightarrow}R^{+}_{\bm{\mu}(t)},\qquad R^{[\bm{\mu(t_1)},\bm{\mu}(t_2)]}=\prod_{t_1\leq t\leq t_2}R^+_{\bm{\mu}(t)}
\end{align}
which is independent of the catty-corner curve $\bm{\mu}(t)$.

\subsection{Drinfeld double for $\Delta_{(\mbf{m})}$}
Following the logic of \cite{N26}. We consider the following subalgebra:
\begin{align}\label{slope-m-paired-subalgebras}
\mc{A}^{\geq\mbf{m}}=\mbf{U}_{Q,\geq\mbf{m}}^+\otimes\mbf{U}_{Q,<\mbf{m}}^-,\qquad\mc{A}^{\leq\mbf{m}}=\mbf{U}_{Q,\geq\mbf{m}}^-\otimes\mbf{U}_{Q,<\mbf{m}}^+
\end{align}

In \cite{N26}, Negu\c{t} introduced a new new coproduct $\Delta_{(\mbf{m})}$ corresponding to the pair of subalgebras. 

\begin{thm}[See Theorem 3.11 in \cite{N26}]\label{new-new-coproduct:theorem}
There is a coproduct $\Delta_{(\mbf{m})}$ on $\mbf{U}_{Q}$ which is the coproduct with respect to the pairing $\langle-,-\rangle:\mc{A}^{\geq\mbf{m}}\otimes\mc{A}^{\leq\mbf{m}}\rightarrow\mbb{K}$. Moreover we have that:
\begin{align*}
\Delta_{(\mbf{m})}:\mc{A}^{\geq\mbf{m}}\rightarrow\mc{A}^{\geq\mbf{m}}\otimes^H\mc{A}^{\geq\mbf{m}},\qquad\Delta_{(\mbf{m})}:\mc{A}^{\leq\mbf{m}}\rightarrow\mc{A}^{\leq\mbf{m}}\otimes^H\mc{A}^{\leq\mbf{m}}
\end{align*}
\end{thm}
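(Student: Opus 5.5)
The plan is to realise $\mbf{U}_{Q}$ as a Drinfeld double in a second way, with $\mc{A}^{\geq\mbf{m}}$ and $\mc{A}^{\leq\mbf{m}}$ as the two halves, and to take $\Delta_{(\mbf{m})}$ to be the coproduct dual to the product on the opposite half. First I would show, using the slope factorization theorem along a catty-corner curve through $\mbf{m}$, that multiplication gives linear isomorphisms
\begin{align*}
\mbf{U}_{Q}^{+}\cong\mbf{U}^+_{Q,<\mbf{m}}\otimes\mbf{U}^+_{Q,\geq\mbf{m}},\qquad \mbf{U}_{Q}^{-}\cong\mbf{U}^-_{Q,<\mbf{m}}\otimes\mbf{U}^-_{Q,\geq\mbf{m}}.
\end{align*}
Each of the four factors is a subalgebra, by the Lemma above on slope subalgebras. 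The pairing \eqref{nondegenerate-bialgebra-pairing} is multiplicative along the factorization, so it restricts to nondegenerate pairings on the factors. Hence $\mc{A}^{\geq\mbf{m}}$ and $\mc{A}^{\leq\mbf{m}}$, with the Cartan part included, are subspaces of $\mbf{U}_{Q}$ whose tensor product maps isomorphically onto $\mbf{U}_{Q}$ under multiplication.

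Next I would check that $\mc{A}^{\geq\mbf{m}}$ is a subalgebra of $\mbf{U}_{Q}$. This amounts to proving that the commutator of $\mbf{U}^+_{Q,\geq\mbf{m}}$ with $\mbf{U}^-_{Q,<\mbf{m}}$ stays inside $\mbf{U}^-_{Q,<\mbf{m}}\mbf{U}^+_{Q,\geq\mbf{m}}$ up to Cartan factors. I would derive this from the Drinfeld double relation together with the Proposition on $\Delta(\mbf{U}^{\pm}_{Q,\leq\mbf{m}})$ and its analogue for slopes $\geq\mbf{m}$. The terms are sorted by naive slope, in the same way as the degree comparison \eqref{naive-slope-degree-comparison} in the proof that $\mc{B}_{\mbf{m}}\subset\mbf{U}_{Q}$.

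I would then define the pairing $\langle-,-\rangle:\mc{A}^{\geq\mbf{m}}\otimes\mc{A}^{\leq\mbf{m}}\rightarrow\mbb{K}$ by
\begin{align*}
\langle e^+g^-,f^-h^+\rangle=\langle e^+,f^-\rangle\langle h^+,g^-\rangle,
\end{align*}
up to the Cartan normalisation by $\gamma$. It is nondegenerate because each factor pairing is. The coproduct $\Delta_{(\mbf{m})}$ on $\mc{A}^{\geq\mbf{m}}$ is defined as the transpose of multiplication on $\mc{A}^{\leq\mbf{m}}$, and the coproduct on $\mc{A}^{\leq\mbf{m}}$ is defined dually. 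Verifying that this is a bialgebra pairing reduces to associativity of the products. Once that holds, the double construction produces a coproduct on all of $\mbf{U}_{Q}$. An equivalent route is to write $\Delta_{(\mbf{m})}=\mc{R}_{\mbf{m}}\Delta(-)\mc{R}_{\mbf{m}}^{-1}$ with a twist $\mc{R}_{\mbf{m}}=R^{<\mbf{m}}$ built from \eqref{factorization-of-R-matrices}, which gives coassociativity by the standard twist argument.

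The main obstacle is the completion statement: that $\Delta_{(\mbf{m})}$ lands in $\otimes^H$ rather than $\otimes^V$. I would attack it by computing the degrees on graded pieces. The product on $\mc{A}^{\leq\mbf{m}}$ restricted to a fixed $(\mbf{n},d)$ involves only finitely many horizontal degrees on each side, once one bounds slopes. Elements of $\mbf{U}^\pm_{Q,\geq\mbf{m}}$ and $\mbf{U}^\pm_{Q,<\mbf{m}}$ have vertical degrees controlled linearly by their horizontal degrees. Combined with the finite-dimensionality of $\mbf{U}_{Q,\leq\mbf{m}|\mbf{n},d}$, this means that fixing the total degree forces $|\text{hdeg}|$ of the tensor factors to go to infinity in opposite directions. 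That is exactly the $\otimes^H$ condition, and dualising it gives the claim for $\Delta_{(\mbf{m})}$.
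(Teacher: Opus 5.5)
The paper gives no argument of its own here; it cites Theorem 3.11 of Negut's paper. The formulas it records right afterwards, $\Delta_{(\mbf{m})}(F)=(R^{-,<\mbf{m}})^{-1}\Delta(F)R^{-,<\mbf{m}}$ and $\mc{R}^{\mbf{m}}=R^{\geq\mbf{m}}R^{\infty}R^{-,<\mbf{m}}$, show that the intended mechanism is a twist of the Drinfeld coproduct.

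Your main route has a genuine gap at the sentence ``verifying that this is a bialgebra pairing reduces to associativity of the products''. Suppose you define $\Delta_{(\mbf{m})}$ on $\mc{A}^{\geq\mbf{m}}$ as the transpose of the product of $\mc{A}^{\leq\mbf{m}}$, and vice versa. Then the duality identities and coassociativity are indeed tautological. Multiplicativity $\Delta_{(\mbf{m})}(aa')=\Delta_{(\mbf{m})}(a)\Delta_{(\mbf{m})}(a')$ is not. Dually it reads $\langle aa',bb'\rangle=\sum\langle a,b_1b_1'\rangle\langle a',b_2b_2'\rangle$, which is a real compatibility between the two multiplications, and hence between the commutation relations inside $\mc{A}^{\geq\mbf{m}}$ and inside $\mc{A}^{\leq\mbf{m}}$.

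More seriously, a bialgebra pairing only produces an abstract double of $\mc{A}^{\geq\mbf{m}}$ and $\mc{A}^{\leq\mbf{m}}$. To get a coproduct on $\mbf{U}_{Q}$ itself, you must show that the product of $\mbf{U}_{Q}$ obeys the cross relations $\sum a_1b_1\langle a_2,b_2\rangle=\sum b_2a_2\langle a_1,b_1\rangle$ computed with the \emph{new} coproduct. That is the actual content of the theorem, and your plan does not address it; your subalgebra step only uses the double relation for the old coproduct $\Delta$.

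The same issue affects your first paragraph. Slope factorization gives $\mbf{U}_{Q}=\mbf{U}^+_{Q,<\mbf{m}}\mbf{U}^+_{Q,\geq\mbf{m}}\mbf{U}^0_{Q}\mbf{U}^-_{Q,<\mbf{m}}\mbf{U}^-_{Q,\geq\mbf{m}}$. Bijectivity of $\mc{A}^{\geq\mbf{m}}\otimes\mc{A}^{\leq\mbf{m}}\rightarrow\mbf{U}_{Q}$ (modulo $h_{i,0}^+h_{i,0}^-=1$) therefore requires moving $\mbf{U}^+_{Q,<\mbf{m}}$ past all of $\mbf{U}^-_{Q}$. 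That again needs cross relations and does not follow by ``hence''.

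To close the gap, take the twist as the definition. Then $\Delta_{(\mbf{m})}:=\mathrm{Ad}\big((R^{-,<\mbf{m}})^{-1}\big)\circ\Delta$ is an algebra map on (a completion of) $\mbf{U}_{Q}$ for free. The remaining work is:
\begin{enumerate}
\item[(i)] Prove the twist cocycle identity for $(R^{-,<\mbf{m}})^{-1}$, which gives coassociativity. This is where the factorization $\mc{R}=R^{<\mbf{m}}R^{\geq\mbf{m}}R^{\infty}$ and coideal properties such as $\Delta(\mbf{U}^+_{Q,\leq\mbf{m}})\subset\mbf{U}^0_{Q}\mbf{U}^+_{Q,\leq\mbf{m}}\otimes^V\mbf{U}^+_{Q}$ enter.
\item[(ii)] Check that the twisted $R$-matrix $(R^{<\mbf{m}})^{-1}\mc{R}\,R^{-,<\mbf{m}}=R^{\geq\mbf{m}}R^{\infty}R^{-,<\mbf{m}}$ is the canonical tensor of your factorized pairing. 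Quasitriangularity then yields the Hopf pairing, and nondegeneracy yields $\Delta_{(\mbf{m})}(\mc{A}^{\geq\mbf{m}})\subset\mc{A}^{\geq\mbf{m}}\hat{\otimes}\mc{A}^{\geq\mbf{m}}$.
\end{enumerate}

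Your ``equivalent route'' uses the wrong element. Twisting by $\mc{F}$ replaces $\mc{R}$ by $\mc{F}_{21}\mc{R}\mc{F}^{-1}$, so $\mathrm{Ad}(R^{<\mbf{m}})$ does not produce $\mc{R}^{\mbf{m}}$. One needs $\mc{F}=(R^{-,<\mbf{m}})^{-1}$, and its cocycle property is not a standard fact for a partial $R$-matrix; it is exactly step (i).

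Your completion argument is only half right.
\begin{itemize}
\item The naive slope bounds do give $\text{vdeg}\geq\mbf{m}\cdot\text{hdeg}$ on all of $\mc{A}^{\geq\mbf{m}}$, hence only finitely many vertical splittings for each horizontal one.
\item They do not fix the direction in which horizontal degrees diverge.
\item Graded pieces of $\mc{A}^{\leq\mbf{m}}$ are infinite-dimensional, so a transposed product need not land in any completion of $\mc{A}^{\geq\mbf{m}}\otimes\mc{A}^{\geq\mbf{m}}$.
\end{itemize}
The $\otimes^H$ statement instead comes from the twist formula: $\Delta(x)$ has finitely many horizontal splittings, and $R^{-,<\mbf{m}}$ has its first leg in $\mbf{U}^-_{Q,<\mbf{m}}$.
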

Moreover, this new new coproduct $\Delta_{(\mbf{m})}$ is preserved when restricted to $\mc{B}_{\mbf{m}}$:

\begin{lem}
When restricted to the slope subalgebra $\mc{B}_{\mbf{m}}$, the coproduct $\Delta_{(\mbf{m})}$ coincides with the coproduct $\Delta_{\mbf{m}}$ defined in \eqref{slope-m-coproduct-slope-subalgebra}.
\end{lem}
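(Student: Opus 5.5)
The argument rests on one fact: a coproduct compatible with a nondegenerate bialgebra pairing is uniquely determined by that pairing. By Theorem \ref{new-new-coproduct:theorem}, $\Delta_{(\mbf{m})}$ is the coproduct dual to the pairing $\mc{A}^{\geq\mbf{m}}\otimes\mc{A}^{\leq\mbf{m}}\rightarrow\mbb{K}$. That is, for $a\in\mc{A}^{\geq\mbf{m}}$ and $b_1,b_2\in\mc{A}^{\leq\mbf{m}}$,
\begin{align*}
\langle\Delta_{(\mbf{m})}(a),b_1\otimes b_2\rangle=\langle a,b_1b_2\rangle,
\end{align*}
and dually for $\mc{A}^{\leq\mbf{m}}$. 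Likewise, $\Delta_{\mbf{m}}$ on $\mc{B}_{\mbf{m}}^{\geq}$ is dual to multiplication in $\mc{B}_{\mbf{m}}^{\leq}$ under the restricted pairing \eqref{slope-subalgebra-pairing}. The plan is therefore to show two things for $F\in\mc{B}^+_{\mbf{m}|\mbf{n}}$ (and symmetrically for $G\in\mc{B}^-_{\mbf{m}|-\mbf{n}}$ and the Cartan elements $h_{i,0}^{\pm}$, which are group-like for both coproducts, so that case is immediate):
\begin{enumerate}
\item $\Delta_{(\mbf{m})}(F)$ lies in $\mc{B}^{\geq}_{\mbf{m}}\otimes\mc{B}^{\geq}_{\mbf{m}}$.
\item $\Delta_{(\mbf{m})}(F)$ pairs with $\mc{B}^{\leq}_{\mbf{m}}\otimes\mc{B}^{\leq}_{\mbf{m}}$ exactly as $\Delta_{\mbf{m}}(F)$ does.
\end{enumerate}
Nondegeneracy of \eqref{slope-subalgebra-pairing} on $\mc{B}_{\mbf{m}}^{\geq}\otimes\mc{B}_{\mbf{m}}^{\leq}$ then forces $\Delta_{(\mbf{m})}(F)=\Delta_{\mbf{m}}(F)$.

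For the second point, take $b_1,b_2\in\mc{B}^{\leq}_{\mbf{m}}$. Both coproducts are dual to multiplication, so both sides of the comparison equal $\langle F,b_1b_2\rangle$. The product $b_1b_2$ is computed in $\mbf{U}_Q$, and it stays in $\mc{B}_{\mbf{m}}^{\leq}$ by the proposition above that $\mc{B}_{\mbf{m}}\subset\mbf{U}_{Q}$ is an algebra monomorphism. The pairing on $\mc{B}_{\mbf{m}}$ is the restriction of \eqref{nondegenerate-bialgebra-pairing}, and $\mc{B}_{\mbf{m}}^{\pm}$ sits inside $\mc{A}^{\geq\mbf{m}}$ and $\mc{A}^{\leq\mbf{m}}$ as the slope-exactly-$\mbf{m}$ piece. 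Hence the pairing of $\mc{A}^{\geq\mbf{m}}$ with $\mc{A}^{\leq\mbf{m}}$ restricts to \eqref{slope-subalgebra-pairing}, and point 2 is settled.

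The main obstacle is the first point: showing that $\Delta_{(\mbf{m})}(F)$ has no components outside slope $\mbf{m}$. Write $\Delta_{(\mbf{m})}(F)$ using the slope factorization $\mc{A}^{\geq\mbf{m}}\cong\mbf{U}^+_{Q,\geq\mbf{m}}\otimes\mbf{U}^-_{Q,<\mbf{m}}$, with $\mbf{U}^+_{Q,\geq\mbf{m}}=\bigotimes^{\rightarrow}_{t\geq t_1}\mc{B}^+_{\bm\mu(t)}$, and pair it against elements $b_1\otimes b_2$ built from factorized products of other slopes. By the factorization of the pairing in the slope factorization theorem, only pairings slope by slope survive. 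The element $F$ lies in $\mc{B}_{\mbf{m}}^+$, so it pairs trivially with any product containing a nontrivial factor of slope $\neq\mbf{m}$. Here I would use the multiplicativity $\langle\prod E_t,\prod F_t\rangle=\prod\langle E_t,F_t\rangle$ together with the factor ordering, placing slope-$\mbf{m}$ factors first in $b_1b_2$, which I expect requires reordering through the Drinfeld double relation.

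Concretely, I would first try the degree argument from the proof of the monomorphism proposition. Terms of the coproduct have naive slopes satisfying the inequalities \eqref{naive-slope-degree-comparison}, with $\text{vdeg}$ additive. Since $\text{vdeg }F=\mbf{m}\cdot\text{hdeg }F$, each tensor factor is forced to the equality case, and each factor then lies in $\mc{B}_{\mbf{m}}$, being of slope $\leq\mbf{m}$ and $\geq\mbf{m}$ with naive slope exactly $\mbf{m}$. Combined with point 2 and nondegeneracy, this completes the identification. The same argument, with $\mbf{U}^-$ replacing $\mbf{U}^+$ and the relevant inequalities reversed, handles $G$.
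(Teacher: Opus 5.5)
Your overall approach is a legitimate alternative to the paper, but its key step has a genuine gap. The approach: both coproducts are dual to multiplication, so it suffices to check agreement against $\mc{B}^{\leq}_{\mbf{m}}\otimes\mc{B}^{\leq}_{\mbf{m}}$ once you know $\Delta_{(\mbf{m})}(F)\in\mc{B}^{\geq}_{\mbf{m}}\otimes\mc{B}^{\geq}_{\mbf{m}}$. The paper instead points to rerunning Negu\c{t}'s proof of Theorem 3.11 inside $\mc{B}_{\mbf{m}}$, or to expanding $(R^{-,<\mbf{m}})^{-1}\Delta(F)R^{-,<\mbf{m}}$ via \eqref{Coproduct-of-between-slope-one-and-Drinfeld-one} and extracting leading terms, which yields the limit formula \eqref{slope-m-coproduct-slope-subalgebra} directly.

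The gap is the containment, which you correctly identify as the main obstacle; your argument does not establish it. The inequalities \eqref{naive-slope-degree-comparison} come from $\Delta(\mbf{U}^+_{Q,\leq\mbf{m}})\subset\mbf{U}^0_{Q}\mbf{U}^+_{Q,\leq\mbf{m}}\otimes^V\mbf{U}^+_{Q}$. They therefore concern the Drinfeld coproduct, whose legs lie in $\mbf{U}^{\geq}_{Q}$. The legs of $\Delta_{(\mbf{m})}(F)$ lie in $\mc{A}^{\geq\mbf{m}}$ and can a priori contain factors from $\mbf{U}^-_{Q,<\mbf{m}}$ and $h^+_{i,d}$ with $d>0$; those inequalities say nothing about such factors.

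Moreover, even for a leg in $\mbf{U}^+_{Q}$, the equality $\text{vdeg}=\mbf{m}\cdot\text{hdeg}$ does not put it in $\mc{B}_{\mbf{m}}$. Your claim that each leg has slope $\leq\mbf{m}$ and $\geq\mbf{m}$ is asserted, not derived. For example, take $U_q(L\mf{sl}_2)$ with $m=1$ (here $\#_{11}=0$). The element $e_0e_2\mapsto q^{-2}(z_1^2+z_2^2)+(q^{-2}-1)z_1z_2$ has naive slope exactly $1$. Yet it grows like $\xi^2$ under $z_1\mapsto\xi z_1$, while slope $\leq1$ allows only $\xi^{1}$. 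So it is not in $\mc{B}_1$, which in this degree is spanned by $e_1^2\mapsto(1+q^{-2})z_1z_2$. The reordering argument you sketch just before this is not carried out.

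The missing ingredient is a positivity statement on $\mc{A}^{\geq\mbf{m}}$ itself. Put $\phi(x)=\text{vdeg }x-\mbf{m}\cdot\text{hdeg }x$, which is additive. Factorize along a catty-corner curve with $\bm{\mu}(t_1)=\mbf{m}$. Then $\mc{A}^{\geq\mbf{m}}$ is spanned by ordered products of elements of $\mc{B}^+_{\bm{\mu}(t)}$ for $t\geq t_1$, of $h^+_{i,d}$, and of $\mc{B}^-_{\bm{\mu}(t)}$ for $t<t_1$.

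A factor of horizontal degree $\pm\mbf{n}\neq\mbf{0}$ has $\phi=(\bm{\mu}(t)-\mbf{m})\cdot\mbf{n}$, resp. $(\mbf{m}-\bm{\mu}(t))\cdot\mbf{n}$. This is strictly positive unless $t=t_1$, because the curve is strictly increasing in every coordinate. Also $\phi(h^+_{i,d})=d\geq0$. Hence every nonzero homogeneous element of $\mc{A}^{\geq\mbf{m}}$ has $\phi\geq0$, with $\phi=0$ only on $\mc{B}^{\geq}_{\mbf{m}}=\mc{B}^+_{\mbf{m}}\otimes\mbb{K}[h^+_{i,0}]$.

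Now $\Delta_{(\mbf{m})}$ preserves the bigrading, e.g. by \eqref{Coproduct-of-between-slope-one-and-Drinfeld-one}, since $R^{-,<\mbf{m}}$ has total degree zero. Since also $\phi(F)=0$, every homogeneous term $x\otimes y$ of $\Delta_{(\mbf{m})}(F)$ has $\phi(x)+\phi(y)=0$. So $\phi(x)=\phi(y)=0$ and $x,y\in\mc{B}^{\geq}_{\mbf{m}}$. The mirror argument on $\mc{A}^{\leq\mbf{m}}$, where $\phi\leq0$, handles $G$.

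With this in place, your point 2 and the nondegeneracy of \eqref{slope-subalgebra-pairing} do finish the proof. Point 2 only needs $\mc{B}^{\leq}_{\mbf{m}}$ to be a subalgebra, not the monomorphism proposition.
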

\begin{proof}
We will just refer to two methods proving this. One can mimick the proof given in Theorem 3.11 of \cite{N26} when restricted to $\mc{B}_{\mbf{m}}$ to show that the coproduct $\Delta_{(\mbf{m})}$ is preserved. Another way is to use the formula given in \eqref{Coproduct-of-between-slope-one-and-Drinfeld-one}, and use the method in Proposition 2.3 in \cite{Z23} to prove the lemma.
\end{proof}

\subsection{Universal $R$-matrix and factorizations}
\subsubsection{Factorization for the universal $R$-matrix}
From the above persepctive, we can see that the coproduct $\Delta_{(\mbf{m})}$ comes from the bialgebra pairing:
\begin{align*}
\langle-,-\rangle:\mc{A}^{\geq\mbf{m}}\otimes\mc{A}^{\leq\mbf{m}}\rightarrow\mbb{K}
\end{align*}
which is induced from the bialgebra pairing $\langle-,-\rangle:\mbf{U}_{Q}^{\geq}\otimes\mbf{U}_{Q}^{\leq}\rightarrow\mbb{K}$. In this case one can induce a universal $R$-matrix $\mc{R}_{\mbf{m}}$ with respect to the above bialgebra pairing. By the factorisation given above, we have that:
\begin{align*}
\mc{R}^{\mbf{m}}=R^{\geq\mbf{m}}R^{\infty}R^{-,<\mbf{m}}\in\mc{A}^{\geq\mbf{m}}\hat{\otimes}\mc{A}^{\leq\mbf{m}}
\end{align*}
with $R^{\geq\mbf{m}}$, $R^{-,<\mbf{m}}:=(R^{<\mbf{m}})_{12}$ defined in \eqref{factorization-of-R-matrices} and $R^{\infty}$ defined in \eqref{Infinity-R-matrix}.

Similar argument as above shows that 
$$(\mc{R}^{\mbf{m}})_{21}^{-1}=(R^{<\mbf{m}})^{-1}(R^{\infty})^{-1}(R^{-,>\mbf{m}})^{-1}\in\mc{A}^{\leq\mbf{m}}\hat{\otimes}\mc{A}^{\geq\mbf{m}}$$
is also the universal $R$-matrix with respect to the coproduct structure $\Delta_{(\mbf{m})}$.

Given $\mbf{m}'>\mbf{m}$, we have the following relation for the universal $R$-matrix:
\begin{align}\label{universal-R-matrix-of-different-slopes}
\mc{R}^{\mbf{m}'}=(R^{[\mbf{m},\mbf{m}')})^{-1}\mc{R}^{\mbf{m}}R^{-,[\mbf{m},\mbf{m}')},\qquad(\mc{R}^{\mbf{m}'})_{21}^{-1}=(R^{[\mbf{m},\mbf{m}')})^{-1}(\mc{R}^{\mbf{m}})_{21}^{-1}(R^{-,[\mbf{m},\mbf{m}')})
\end{align}
and the relation between the coproduct $\Delta_{(\mbf{m})}$ and $\Delta_{(\mbf{m}')}$ can be given by:
\begin{align}\label{Relation-between-coproducts-of-different-slope}
\Delta_{(\mbf{m}')}(F)=(R^{-,[\mbf{m},\mbf{m}')})^{-1}\Delta_{(\mbf{m})}(F)R^{-,[\mbf{m},\mbf{m}')}.
\end{align}

Furthermore, the relation between the coproduct $\Delta_{(\mbf{m})}$ and the Drinfeld coproduct $\Delta$ defined in \eqref{Drinfeld-coproduct-definition} is given by:
\begin{align}\label{Coproduct-of-between-slope-one-and-Drinfeld-one}
\Delta_{(\mbf{m})}(F)=(R^{-,<\mbf{m}})^{-1}\Delta(F)R^{-,<\mbf{m}}.
\end{align}

\subsubsection{Infinite slope $R$-matrix}
The subtlety above is to construct the universal $R$-matrix for $\mc{B}_{\infty}$. One can factorise the universal $R$-matrix $R^{\infty}$ as:
\begin{align}\label{Infinity-R-matrix}
R^{\infty}=R_{\infty}'R_{\infty}''
\end{align}
and here $R_{\infty}''$ is the part generated by $h_{i,d}^{\pm}$ with $d\geq1$, it is relatively easy to see that the bialgebra pairing is nondegenerate. However, if we restrict to the subalgebra generated by $h_{i,0}^{\pm}$, the bialgebra pairing restricted to this Cartan subalgebra is degenerate, and this should correspond to $R_{\infty}'$.

To deal with the subtleties, it is convenient to set a new formal parametre $\hbar$ such that $h_{i,0}^{\pm}=e^{\pm\hbar H_i}$ and $\gamma_{ij}:=e^{\hbar\pi_{ij}}$. In this case we will have the bialgebra pairing:
\begin{align}\label{renewed-cartan-bialgebra-pairing}
\mbb{K}[[\hbar]][H_i]_{i\in I}\otimes\mbb{K}[[\hbar]][H_i]_{i\in I}\rightarrow\mbb{K}[[\hbar]],\qquad\langle H_i,H_j\rangle=\hbar\pi_{ij}.
\end{align}

Throughout the paper we will assume that the matrix $(\pi_{ij})_{i,j\in I}$ is nondegenerate.  Thus one can write down the universal $R$-matrix $R_{\infty}'$ as:
\begin{align*}
R_{\infty}'=\exp(\hbar\sum_{i,j\in I}(\pi^{-1})_{ij}H_i\otimes H_j).
\end{align*}

In our paper we denote 
\begin{align*}
q:=e^{\hbar}
\end{align*}
and we also denote
\begin{align}\label{Cartan-R-matrix}
q^{\Omega}:=R_{\infty}',\qquad\Omega=\sum_{i,j\in I}(\pi^{-1})_{ij}H_i\otimes H_j
\end{align}
which was used in \cite{OS22}.

\subsection{Equivariant parametres and automorphisms}\label{sub:equivariant_parametres_and_automorphisms}
Fix a generic parametre $a\in\mbb{C}^*$, we define the automorphism $D_a$ of $\mbf{U}_{Q}\otimes\mbb{K}[[a^{\pm1}]]$ via:
\begin{equation*}
\begin{aligned}
&D_a(F(z))=a^{\pm d}F(z),\qquad F(z)\in\mbf{U}_{Q,\pm d}^{\pm}\\
&D_a(h_{i,\pm d}^{\pm})=a^{\pm d}h_{i,\pm d}^{\pm}
\end{aligned}
\end{equation*}
and we will call the parametre $a$ as the \textbf{equivariant parametre}.

In this way one can define the corresponding universal $R$-matrix with spectral parameter by:
\begin{align*}
\mc{R}^{s}(a):=(D_a\otimes\text{Id})\mc{R}^s\in\mc{S}\hat{\otimes}\mc{S}[[a]]
\end{align*}
and
\begin{align*}
R_{\mbf{m}}^{\pm}(a):=(D_a\otimes\text{Id})R_{\mbf{m}}^{\pm}\in\mc{B}_{\mbf{m}}\hat{\otimes}\mc{B}_{\mbf{m}}[[a^{\pm1}]]
\end{align*}

\section{\textbf{Category $\mc{O}$ for the quantum loop group}}\label{sec:_textbf_category_mc_o}

In this section we introduce the category $\mc{O}$ for the quantum loop group $\mbf{U}_{Q}$. This was first introduced in \cite{MY14} for the case of the quantum affine algebras. 

\begin{defn}
An $\ell$-weight is an $I$-tuple of sequences of complex numbers
\begin{align*}
\bm{\psi}:=(\psi^{\pm}_{i,\pm d})_{i\in I,d\geq0}
\end{align*}
such that $\psi_{i,0}^+\psi_{i,0}^-=1$ for every $i\in I$.
\end{defn}

\begin{defn}\label{classification-of-weight:category_mc_o}
An $\ell$-weight is a collection of invertible power series
\begin{align*}
\bm{\psi}=(\psi_{i}^{\pm}(z)=\sum_{d=0}^{\infty}\frac{\psi_{i,\pm d}^{\pm}}{z^{\pm d}}\in\mbb{C}[[z^{\mp1}]]^*)
\end{align*}
such a $\bm{\psi}$ is called
\begin{itemize}
	\item \textbf{rational} if every $\psi_i^{\pm}(z)$ is the expansion of a rational function.
	\item \textbf{regular} if every $\psi_i^{\pm}(z)$ is the expansion of a rational function which is regular at $z=0$.
	\item \textbf{polynomial} if every $\psi_i^{\pm}(z)$ is a polynomial in $z^{-1}$.
\end{itemize}
\end{defn}

\subsection{Category $\mc{O}$}
We consider the $\mbf{U}_{Q}$-module $V$ such that the Cartan subalgebra $\{h_{i,0}^{\pm}\}_{i\in I}$ acts diagonalisably:
\begin{align*}
V=\bigoplus_{\bm{\lambda}=(\lambda_i)_{i\in I}(\mbb{K}^*)^I}V_{\bm{\lambda}}
\end{align*}
where:
\begin{align*}
V_{\bm{\lambda}}=\{v\in V|h_{i,0}^+\cdot v=\lambda_i v\}.
\end{align*}

We say $V\in\mc{O}$ is a highest $\ell$-weight representation of highest weight $\bm{\psi}$ if $V=\mbf{U}_{Q}\cdot v$ for some singular $\ell$-weight $v\in V$. We call $v$ the highest $\ell$-weight vector $V$.

By the following commutation relations:
\begin{align*}
h_{i,0}^+X=Xh_{i,0}^+\gamma_{\mbf{e}^i,\text{hdeg }X}
\end{align*}
we have, for any $X\in\mbf{U}_{Q}$,
\begin{align*}
X:V_{\bm{\lambda}}\rightarrow V_{\bm{\lambda}\bm{\gamma}^{\text{hdeg }X}}
\end{align*}
where $\bm{\gamma}^{\mbf{n}}$ denotes the weight with $i$-th component $\gamma_{\mbf{e}^i,\mbf{n}}$ for all $\mbf{n}\in\mbb{Z}^I$. 

\begin{defn}
A $\mbf{U}_{Q}$-module $V$ is said to be in category $\mc{O}$ if it has a weight decomposition above with every $V_{\bm{\lambda}}$ finite-dimensional and nonzero only for
\begin{align*}
\bm{\lambda}\in\{\bm{\lambda}^1\bm{\gamma}^{\mbf{n}},\cdots,\bm{\lambda}^k\bm{\gamma}^{\mbf{n}}\}_{\mbf{n}\in\mbb{N}^I}
\end{align*}
for finitely many weights $\bm{\lambda}^1,\cdots,\bm{\lambda}^k$.
\end{defn}

\textbf{Remark.} The category $\mc{O}$ corresponds to the affine category $\hat{\mc{O}}$ defined in \cite{MY14}. In this paper we only use the notation $\mc{O}$ as the affine version mentioned in that paper, and we will not consider the category $\mc{O}$ for the slope subalgebras.

\subsubsection{Simples modules by quotients}
We say that $V\in\mc{O}$ is a highest $\ell$-weight representation of highest $\ell$-weight $\bm{\psi}$ if $V=\mbf{U}_{Q}\cdot v$ for some vector $v\in V_{\bm{\lambda}}$ such that $h_{i}^{\pm}(z)\cdot v=\psi_{i}^{\pm}(z)\cdot v$.

For every rational $\ell$-weight $\bm{\psi}$, we denote by $L(\bm{\psi})$ the simple $\mbf{U}_{Q}$-module with highest $\ell$-weight $\bm{\psi}$. Obviously that $L(\bm{\psi})$ and $L(\bm{\psi}')$ are isomorphic if $\bm{\psi}=\bm{\psi}'$.

We denote $\mc{R}$ the set of rational $\ell$-weights as in Definition \ref{classification-of-weight:category_mc_o}. 
\begin{thm}
The map $\bm{\psi}\mapsto L(\bm{\psi})$ defines a bijection between $\mc{R}$ and the isomorphism classes of simple objects in $\mc{O}$.
\end{thm}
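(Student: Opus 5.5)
The plan is the standard Hernandez--Jimbo / Mukhin--Young argument, adapted to the shuffle presentation. It has three parts: well-definedness and injectivity of $\bm{\psi}\mapsto L(\bm{\psi})$, surjectivity (every simple object in $\mc{O}$ is some $L(\bm{\psi})$), and existence (for every rational $\bm{\psi}$ the module $L(\bm{\psi})$ actually lies in $\mc{O}$).

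\textbf{Step 1: Verma module and uniqueness.} For an arbitrary $\ell$-weight $\bm{\psi}$, form the Verma-type module
\begin{align*}
M(\bm{\psi})=\mbf{U}_{Q}\otimes_{\mbf{U}_{Q}^{\geq}}\mbb{K}_{\bm{\psi}},
\end{align*}
where $\mbf{U}_{Q}^+$ acts by zero and $h_i^{\pm}(z)$ acts by $\psi_i^{\pm}(z)$. The triangular decomposition $\mbf{U}_Q\cong\mbf{U}_Q^-\otimes\mbf{U}_Q^0\otimes\mbf{U}_Q^+$ comes from the Drinfeld double construction. It gives $M(\bm{\psi})\cong\mbf{U}_Q^-$, which is graded by $-\mbb{N}^I$. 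Hence $M(\bm{\psi})$ has a unique maximal proper submodule, namely the sum of all submodules missing the top weight space, and therefore a unique simple quotient $L(\bm{\psi})$. Any highest $\ell$-weight module of highest $\ell$-weight $\bm{\psi}$ is a quotient of $M(\bm{\psi})$, so $L(\bm{\psi})$ is determined by $\bm{\psi}$. Conversely, $\bm{\psi}$ is recovered from $L(\bm{\psi})$ as the eigenvalues of $h_i^{\pm}(z)$ on the one-dimensional top weight space. This gives injectivity.

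\textbf{Step 2: surjectivity.} Let $V\in\mc{O}$ be simple. Its weights lie in finitely many cones $\bm{\lambda}^j\bm{\gamma}^{\mbf{n}}$. Since $\mbf{U}_Q^+$ shifts weights by $\bm{\gamma}^{\mbf{n}}$ with $\mbf{n}>0$, there is a maximal weight $\bm{\lambda}$ with $\mbf{U}_{Q}^+V_{\bm{\lambda}}=0$; here I use the nondegeneracy of $(\pi_{ij})$ so that $\bm{\gamma}^{\mbf{n}}\neq1$ for $\mbf{n}\neq0$. The space $V_{\bm{\lambda}}$ is finite-dimensional and the $h_{i,\pm d}^{\pm}$ commute on it, so they have a common eigenvector $v$. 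By simplicity $V=\mbf{U}_Qv$, so $V\cong L(\bm{\psi})$. It remains to show $\bm{\psi}$ is rational. Take $f\in\mbf{U}^-_{Q,-\mbf{e}_i}$, i.e. the components $f_{i,d}$. The space $V_{\bm{\lambda}\bm{\gamma}^{-\mbf{e}_i}}$ is finite-dimensional, so the vectors $f_{i,d}v$ satisfy a finite linear recursion in $d$. Applying $e_{i,k}$ and using $[e_i(z),f_i(w)]=\delta(z/w)(h_i^+(z)-h_i^-(w))$ transfers this recursion to the coefficients of $\psi_i^{\pm}$. Hence $\psi_i^+(z)$ and $\psi_i^-(z)$ are expansions of the same rational function, as in \cite{MY14}.

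\textbf{Step 3: existence, the main obstacle.} Given a rational $\bm{\psi}$, we must show that $L(\bm{\psi})$ has finite-dimensional weight spaces. Two approaches are available.

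The first approach uses fundamental modules. Factor $\bm{\psi}$ as a product of constant $\ell$-weights and one-variable ``prefundamental'' $\ell$-weights $\psi_i^{\pm}(z)=(1-a/z)^{\pm1}\delta_{ij}$. For each factor, construct a module in $\mc{O}$; in the shuffle picture these come from acting on spaces of symmetric functions, or via limits of tensor powers. Then take tensor products using the coproduct $\Delta$, and take the subquotient generated by the tensor product of highest weight vectors.

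The second approach, which I would try first, is direct. Show that the image of $\mbf{U}^-_{Q,-\mbf{n}}v$ in $L(\bm{\psi})$ is finite-dimensional. An element $G\in\mbf{U}^-_{Q,-\mbf{n}}$ acts as zero in $L(\bm{\psi})$ if $\langle F\cdot G v\rangle=0$ for all $F\in\mbf{U}^+_{Q,\mbf{n}}$. Expressing this Shapovalov form through the pairing $\langle\,,\rangle$ and the Drinfeld double relation gives an integral of $G$ against $\prod\psi_{i}(z_{ia})$ times $\zeta$-factors. If $\psi_i$ has numerator and denominator of bounded degree, this only depends on $G$ modulo the ideal of shuffle elements vanishing at finitely many evaluation conditions determined by the zeros and poles of $\bm{\psi}$. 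That quotient is finite-dimensional in each degree $\mbf{n}$, which gives the required finiteness.

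Making this residue argument rigorous for arbitrary $\zeta_{ij}$ is the delicate point. I expect the sufficient condition to be that the kernel of the Shapovalov form contains all $G$ divisible by a fixed polynomial determined by $\bm{\psi}$.
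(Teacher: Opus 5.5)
\textbf{There is a genuine gap in Step 3.}

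Your Steps 1 and 2 are sound and agree with the paper's treatment of the direction ``simple in $\mc{O}$ $\Rightarrow$ rational.'' A simple object of $\mc{O}$ is highest $\ell$-weight. Testing a single linear relation among the $f_{i,d}v$ against every $e_{i,s}$, via $[e_{i,s},f_{i,t}]=h^+_{i,s+t}-h^-_{i,s+t}$, forces $\psi_i^{\pm}$ to be the two expansions of one rational function.

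What is missing is the converse: you never prove that $L(\bm{\psi})\in\mc{O}$ for rational $\bm{\psi}$, and this is half of the bijection. Route (b) ends with ``I expect,'' so it is not an argument. Route (a) fails as stated, for two reasons.
\begin{itemize}
\item Its building blocks are not available. The factors $(1-a/z)^{\pm1}$ are not regular and nonvanishing at $z=0$, whereas $\psi_i^-(z)$ must be an invertible power series in $z$ expanding the same function. By your own Step 2, no module in $\mc{O}$ for the double $\mbf{U}_Q$ has such an $\ell$-weight. Prefundamental modules live in Hernandez--Jimbo's category for a Borel subalgebra, and they are built as limits of Kirillov--Reshetikhin modules, which have no analogue for arbitrary $\zeta_{ij}$.
\item $\Delta$ is only a topological coproduct. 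The expression $\Delta(e_{i,d})=e_{i,d}\otimes1+\sum_{k\geq0}h^+_{i,k}\otimes e_{i,d-k}$ is an infinite sum. Without a spectral parameter there is no reason for it to converge on $V\otimes W$ for $V,W\in\mc{O}$, so ``take tensor products using $\Delta$'' is not available.
\end{itemize}

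The missing idea, which is what the paper does, is to run your Step 2 computation backwards using simplicity.
\begin{itemize}
\item Let $A(z)=\sum_{j=0}^{N}a_jz^j$, with $a_0a_N\neq0$, be a common denominator of $\psi_i^{\pm}$. Then $A(z)\bigl(\psi_i^+(z)-\psi_i^-(z)\bigr)=0$ as a two-sided series, i.e.\ $\sum_j a_j(\psi^+_{i,m+j}-\psi^-_{i,m+j})=0$ for every $m\in\mbb{Z}$.
\item Hence $u_M:=\sum_j a_jf_{i,M+j}v$ is killed by every $e_{i,s}$, and trivially by every $e_{k,s}$ with $k\neq i$.
\item So $u_M$ is a singular vector of weight $\bm{\lambda}\bm{\gamma}^{-\mbf{e}_i}\neq\bm{\lambda}$ in the simple module $L(\bm{\psi})$. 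It therefore generates a proper submodule, and $u_M=0$.
\item Since $a_0a_N\neq0$, the relations $u_M=0$ for all $M$ put every $f_{i,r}v$ in the span of $f_{i,0}v,\dots,f_{i,N-1}v$.
\end{itemize}
The paper then extends finite-dimensionality to all weight spaces by induction on $\mbf{n}$, deferring that step to Chari--Pressley's argument; this is where the remaining work lies.

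A rigorous version of your Shapovalov/residue idea could replace that induction with a shuffle-theoretic finite-rank statement covering all $\mbf{n}$ at once. As written, however, you neither compute $\langle F\cdot Gv\rangle$ from the double relation nor prove that it has finite rank in each degree.
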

\begin{proof}
The proof is similar to the proof of Theorem 3.6 in \cite{MY14}. Here we make a review of the proof.

Suppose that $V\in\mc{O}$ is irreducible, then $V$ contains a singular $\ell$-weight vetcor $v$. Since $V$ is irreducible, $V=\mbf{U}_{Q}\cdot v$ and so $V$ is a highest $\ell$-weight representation. 

It is enough to show that a highest $\ell$-weight irreducible representation $V$ is in $\mc{O}$ if and only if its highest $\ell$-weight $\bm{\psi}$ is rational.

Now we first suppose that $\text{dim }(V_{\mbf{e}_i})<\infty$. This implies that there exists $N>0$ and $a_1,\cdots,a_N\in\mbb{K}$ such that $\sum_{j=1}^{N}a_{j}e_{i,j}\cdot v=0$. Thus we have that for all $s\in\mbb{Z}$:
\begin{align*}
0=f_{i,s}\sum_{j=1}^Na_{j}e_{i,j}\cdot v=\sum_{j=1}^Na_j(h_{i,j+s}^{+}-h_{i,-j-s}^{-})
\end{align*}
which means that $\sum_{j=1}^Na_j\psi_{i,j+s}^+=0$ for all $s\geq0$. Also note that:
\begin{equation*}
\begin{aligned}
\psi_i^+(z)\sum_{j=1}^Na_jz^{N-j}=\sum_{j=1}^N\sum_{s=-j}^{\infty}z^{s+N}a_j\psi_{i,j+s}^+=\sum_{j=1}^N\sum_{s=-j}^{-1}z^{s+N}a_{j}\psi_{i,j+s}^+=
\sum_{j=1}^Nb_jz^{N-j},\qquad b_j=\sum_{\ell=1}^Na_{\ell}\psi_{i,\ell-j}^+
\end{aligned}
\end{equation*}
and therefore we have that:
\begin{align*}
\psi_{i}^+(z)=\frac{\sum_{j=1}^Nb_jz^{N-j}}{\sum_{j=1}^Na_jz^{N-j}}.
\end{align*}

Similarly $0=\sum_{j=1}^Na_j\psi_{i,j+s}^-$ for all $s<-N$. Letting $\psi_{i}^-(z):=\sum_{n=0}^{\infty}z^{-n}\psi_{i,-n}^-$, we have
\begin{align*}
\psi_{i}^-(z)\sum_{j=1}^Na_jz^{N-j}=\sum_{j=1}^Nb_j'z^{N-j},\qquad b_j'=\sum_{\ell=1}^Na_{\ell}\psi_{i,\ell-j}^-
\end{align*}
and thus we have that:
\begin{align*}
\psi_{i}^-(z)=\frac{\sum_{j=1}^Nb_j'z^{N-j}}{\sum_{j=1}^Na_jz^{N-j}}
\end{align*}
and this implies that $\psi_{i}^+(z)$ and $\psi_{i}^-(z)$ are rational functions.

Conversely, suppose that $\psi_{i}^+(z)$ and $\psi_i^-(z)$ are as above for some $N>0$. Then a similar calculation shows that $f_{i,s}\sum_{j=1}^Na_je_{i,M+j}\cdot v=0$ for all $M\in\mbb{Z}$. Since $V$ has no singular vectors which are not scalar multiples of $v$, it follows that $\sum_{j=1}^Na_{j}e_{i,M+j}\cdot v=0$. By applying this result finitely many times, any given vector $e_{i,r}v$ can be expressed as a linear combination of the vectors $\{e_{i,j}v|j=1,\cdots,N\}$, which means that $V_{\bm{\lambda}\bm{\gamma}^{-\mbf{e}_i}}$ is finite-dimensional.

Now if $V_{\mbf{e}_i}$ is finite-dimensional for every $i\in I$. It remains to prove that the weighted space $V_{\bm{\alpha}}$ is finite-dimensional too. Now we suppose that the statement is true for $\bm{\beta}<\bm{\alpha}$. Thus without loss of assumption, we assume that the statement is true for $V_{\bm{\beta}}$, and for $V_{\bm{\beta}+\mbf{e}_i}$. The induction follows from the proof of case (b) in \cite{CP94}.

For the only if part, one can refer to the Lemma 4.11 in \cite{Her04}.

\end{proof}

\subsection{Line bundle operator for semisimple modules}
Recall that we have defined an automorphism map $\tau_{\mbf{k}}:\mbf{U}_{Q}\rightarrow\mbf{U}_{Q}$ for the quantum loop group. For simplicity, we now assume that $V$ is a simple $\mbf{U}_{Q}$-module. By the automorphism map $\tau_{\mbf{k}}:\mbf{U}_{Q}\rightarrow\mbf{U}_{Q}$ constructed in \eqref{shift-automorphism-shuffle-big}, \eqref{negative-shift-automorphism-shuffle-big} and Lemma \ref{shifted-automorphism-on-quantum-loop-group:lemma}, the automorphism $\tau_{\mbf{k}}$ will induce a new $\mbf{U}_{Q}$-module structure on $V$, and we denote it by $V^{\tau_{\mbf{k}}}$.

\begin{lem}\label{line-bundle-operator:lemma}
Suppose that $V\in\mc{O}$ is a simple module or the tensor product or the direct sum of simple modules in $\mc{O}$. There is an isomorphism of $\mbf{U}_{Q}$-modules $V\cong V^{\tau_{\mbf{k}}}$ which intertwines the automorphism $\tau_{\mbf{k}}$ in Lemma \ref{shifted-automorphism-on-quantum-loop-group:lemma} for arbitrary $\mbf{k}\in\mbb{Z}^I$ , and we denote the automorphism by $\mc{L}_{\mbf{k}}$
\end{lem}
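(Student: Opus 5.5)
The plan is to realise $\mc{L}_{\mbf{k}}$ first for simple modules, using the classification of simples in $\mc{O}$ by rational $\ell$-weights, and then extend it to tensor products and direct sums. Concretely, for simple $V$ we want a linear isomorphism $\mc{L}_{\mbf{k}}:V\rightarrow V$ with $\mc{L}_{\mbf{k}}X=\tau_{\mbf{k}}(X)\mc{L}_{\mbf{k}}$ for all $X\in\mbf{U}_{Q}$.

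The first step is to extend $\tau_{\mbf{k}}$ from $\mbf{U}_{Q}^{\pm}$ (Lemma \ref{shifted-automorphism-on-quantum-loop-group:lemma}) to an automorphism of the whole Drinfeld double $\mbf{U}_{Q}$. We let it act trivially on all $h_{i,\pm d}^{\pm}$. We must check compatibility with the defining relations.
\begin{itemize}
\item The relations between $h^{\pm}$ and $\mbf{U}^{\pm}_{Q}$ are preserved, because multiplying a shuffle element by $\prod z_{ia}^{k_i}$ commutes with multiplication by the rational factors $\zeta_{ij}(y/z_{ja})/\zeta_{ji}(z_{ja}/y)$.
\item The pairing satisfies $\langle\tau_{\mbf{k}}E,\tau_{\mbf{k}}F\rangle=\langle E,F\rangle$. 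In the integrals of Definition \ref{bialgebra-pairing-for-the-large-algebra:label}, the factors $z^{\mbf{k}}$ from \eqref{shift-automorphism-shuffle-big} and $z^{-\mbf{k}}$ from \eqref{negative-shift-automorphism-shuffle-big} cancel. The pairing on Cartan elements is untouched.
\item $\tau_{\mbf{k}}$ commutes with the Drinfeld coproduct, i.e. $\Delta\circ\tau_{\mbf{k}}=(\tau_{\mbf{k}}\otimes\tau_{\mbf{k}})\circ\Delta$. This holds because the monomial $\prod z_{ia}^{k_i}$ splits multiplicatively across the two tensor factors in \eqref{Drinfeld-coproduct-definition}.
\end{itemize}
Since the double multiplication is built only from $\Delta$, $S$ and the pairing, $\tau_{\mbf{k}}$ respects it. As a sanity check, it is compatible with $[e_i(z),f_j(w)]=\delta_{ij}\delta(z/w)(h_i^+(z)-h_i^-(w))$, because $e_{i,d}\mapsto e_{i,d+k_i}$ and $f_{i,d'}\mapsto f_{i,d'-k_i}$ leave $d+d'$ unchanged. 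I expect this extension to the full double, with the Cartan part fixed, to be the main technical point. The rest is formal.

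Next, let $V=L(\bm{\psi})$ be simple with highest $\ell$-weight vector $v$. Because $\tau_{\mbf{k}}$ is an automorphism, $V^{\tau_{\mbf{k}}}$ is again simple. Since $\tau_{\mbf{k}}$ preserves $\mbf{U}_{Q}^{+}$, the $h^{\pm}$ and horizontal degrees, the vector $v$ is still singular in $V^{\tau_{\mbf{k}}}$ with the same $\ell$-weight $\bm{\psi}$. The weight spaces of $V^{\tau_{\mbf{k}}}$ coincide with those of $V$, so $V^{\tau_{\mbf{k}}}\in\mc{O}$. By the classification theorem (the bijection $\bm{\psi}\mapsto L(\bm{\psi})$), $V^{\tau_{\mbf{k}}}\cong L(\bm{\psi})=V$. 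An isomorphism $\mc{L}_{\mbf{k}}:V\rightarrow V^{\tau_{\mbf{k}}}$ is exactly the intertwining relation above. By Schur's lemma it is unique up to scalar, and we normalise it by $\mc{L}_{\mbf{k}}v=v$. The normalisation also gives $\mc{L}_{\mbf{k}}\mc{L}_{\mbf{k}'}=\mc{L}_{\mbf{k}+\mbf{k}'}$.

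Finally, for $V=\bigoplus V_j$ we take $\mc{L}_{\mbf{k}}=\bigoplus\mc{L}_{\mbf{k}}^{(j)}$. For $V=V_1\otimes V_2$, the identity $\Delta\circ\tau_{\mbf{k}}=(\tau_{\mbf{k}}\otimes\tau_{\mbf{k}})\circ\Delta$ gives $(V_1\otimes V_2)^{\tau_{\mbf{k}}}=V_1^{\tau_{\mbf{k}}}\otimes V_2^{\tau_{\mbf{k}}}$, so $\mc{L}_{\mbf{k}}^{(1)}\otimes\mc{L}_{\mbf{k}}^{(2)}$ intertwines. Iterating handles arbitrary finite tensor products and direct sums of simple modules.
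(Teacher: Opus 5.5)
Your proposal is correct and follows the same route as the paper. You identify $L(\bm{\psi})^{\tau_{\mbf{k}}}$ with $L(\bm{\psi})$ via the classification of simple objects and Schur's lemma, then take diagonal sums and tensor products of the resulting intertwiners. It is also more complete than the paper's argument, because you make explicit that $\tau_{\mbf{k}}$ extends to the whole double with the Cartan part fixed and that $\Delta\circ\tau_{\mbf{k}}=(\tau_{\mbf{k}}\otimes\tau_{\mbf{k}})\circ\Delta$ for the Drinfeld coproduct, which is what the tensor-product step needs (for the slope coproducts one instead has $(\tau_{\mbf{k}}\otimes\tau_{\mbf{k}})\circ\Delta_{(\mbf{m})}=\Delta_{(\mbf{m}+\mbf{k})}\circ\tau_{\mbf{k}}$, so there $\mc{L}_{\mbf{k}}\otimes\mc{L}_{\mbf{k}}$ intertwines only up to a shift of slope).
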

\begin{proof}
Let us first prove the lemma for $V=L(\bm{\psi})$. It is easy to see that $L(\bm{\psi})^{\tau_{\mbf{k}}}$ is also a simple module with the highest $\ell$-weight given by $\bm{\psi}$. By the above theorem we know that there is an isomorphism of $\mbf{U}$-modules $L(\bm{\psi})\cong L(\bm{\psi})^{\tau_{\mbf{k}}}$. By the Schur lemma, we have an invertible morphism and we denote it by $\mc{L}_{\mbf{k}}$.

The case for the direct sum of simple modules is just taking the diagonal sum of $\sum\mc{L}_{\mbf{k}}$. For the case of the tensor product of simple modules we just take the tensor product $\otimes\mc{L}_{\mbf{k}}$ of $\mc{L}_{\mbf{k}}$.

\end{proof}

\textbf{Remark. }The terminology of the line bundle operator comes from \cite{OS22} that such an intertwiner operator corresponds to the line bundle multiplication in the $K$-theory of the Nakajima quiver varieties.

\section{\textbf{Dynamical Weyl groups for shuffle algebras}}\label{sec:_textbf_dynamical_weyl_groups_for_shuffle_algebras}
\subsection{Fusion operators}
Given a simple module $V\in\mc{O}$ in the category $\mc{O}$. Since $V$ is the highest loop $\ell$-weight module, we can simply write $V=\bigoplus_{\mbf{n}\in\mbb{N}^I}V_{\mbf{n}}$ with $V_{\mbf{n}}:=V_{\bm{\lambda}\bm{\gamma}^{-\mbf{n}}}$. Given a set of parametres $(z_1,\cdots,z_{I})\in(\mbb{C}^*)^{I}$, we define the operator $z$ as:
\begin{align*}
z\cdot v=\prod_{i\in I}z_{i}^{n_i}v,\qquad v\in V_{\mbf{n}}
\end{align*}
with $\mbf{n}=(n_1,\cdots,n_{I})$. We also write
\begin{align*}
q^{\Omega}=e^{\sum_{i\in I}(\pi^{-1})_{ij}H_{i,0}^+\otimes H_{j,0}^{-}}
\end{align*}
as defined in \eqref{Cartan-R-matrix}.

We refer to the parametres $z=(z_1,\cdots,z_{I})$ as the \textbf{Kahler parametres}, following the terminology of \cite{O15} in the $K$-theoretic quasimap counting theory for the Nakajima quiver varieties.

For the weight modules $V=\bigoplus_{\mbf{n}\in\mbb{N}^I}V_{\bm{\lambda}\gamma^{-\mbf{n}}}$ in the category $\mc{O}$, we will often suppress their corresponding highest weight $\bm{\lambda}$ and simply write $V=\bigoplus_{\mbf{n}\in\mbb{N}^I}V_{\mbf{n}}$ when no confusion can arise.

Given two weight modules $V=\bigoplus_{\mbf{n}\in\mbb{N}^I}V_{\mbf{n}}$ and $W=\bigoplus_{\mbf{n}\in\mbb{N}^I}W_{\mbf{n}}$ in $\mc{O}$, and let $F\in\text{End}(V\otimes W)$. We say that the operator $F$ is \textbf{upper-triangular} if the operator $F$ admits the decomposition $F=\bigoplus_{\bm{\alpha}\in\mbb{N}^I}F_{\bm{\alpha}}$ such that $F_{\bm{\alpha}}:V_{\mbf{n}_1}\otimes W_{\mbf{n}_2}\rightarrow V_{\mbf{n}_1+\bm{\alpha}}\otimes W_{\mbf{n}_2-\bm{\alpha}}$ and $F_{\mbf{0}}=\text{Id}$. Similarly, we say that the operator $F$ is \textbf{lower-triangular} if the operator $F$ admits the decomposition $F=\bigoplus_{\bm{\alpha}\in\mbb{N}^I}F_{-\bm{\alpha}}$ such that $F_{\bm{0}}=\text{Id}$ and $F_{-\bm{\alpha}}:V_{\mbf{n}_1}\otimes W_{\mbf{n}_2}\rightarrow V_{\mbf{n}_1-\bm{\alpha}}\otimes W_{\mbf{n}_2+\bm{\alpha}}$.

The first step of the construction is the invertible solution to the ABRR equation. It was first introduced in \cite{ABRR97} and the following analogue was also formulated in \cite{OS22}.

\begin{prop}\label{ABRR-equation}
There exists an unique invertible strictly upper triangular operator $J_{\mbf{m}}^{+}(z)$， which we call it the fusion operator, and an unique invertible strictly lower triangular operator $J_{\mbf{m}}^{-}(z)$ solutions of the following ABRR equations:
\begin{align}\label{ABRR-eqn}
R_{\mbf{m}}^+J_{\mbf{m}}^+(z)z_{(1)}^{-1}q^{\Omega}=z_{(1)}^{-1}q^{\Omega}J_{\mbf{m}}^+(z),\qquad z_{(1)}^{-1}q^{\Omega}J_{\mbf{m}}^-(z)R_{\mbf{m}}^-=J_{\mbf{m}}^-(z)z_{(1)}^{-1}q^{\Omega}
\end{align}
where $R_{\mbf{m}}^{-}:=(R_{\mbf{m}}^+)_{21}$ is the transposition of $R_{\mbf{m}}^+$. Moreover, $J_{\mbf{m}}^{\pm}(z)$ are elements in a completion $\mc{B}_{\mbf{m}}\hat{\otimes}\mc{B}_{\mbf{m}}$ satisfying:
\begin{align*}
S_{\mbf{m}}\otimes S_{\mbf{m}}((J_{\mbf{m}}^{-}(z))_{21})=J_{\mbf{m}}^{+}(z).
\end{align*}
\end{prop}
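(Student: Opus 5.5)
We solve the ABRR equation order by order in the triangular grading, following the standard argument of \cite{ABRR97} and its adaptation in \cite{OS22}. Write the universal $R$-matrix of the slope subalgebra as $R_{\mbf{m}}^+=\sum_{\mbf{n}\in\mbb{N}^I}R_{\mbf{m}|\mbf{n}}$, where $R_{\mbf{m}|\mbf{n}}\in\mc{B}^+_{\mbf{m}|\mbf{n}}\otimes\mc{B}^-_{\mbf{m}|-\mbf{n}}$ and $R_{\mbf{m}|\mbf{0}}=1$. We look for
\begin{align*}
J^+_{\mbf{m}}(z)=\sum_{\bm{\alpha}\in\mbb{N}^I}J_{\bm{\alpha}}(z),\qquad J_{\bm{\alpha}}(z)\in(\mc{B}_{\mbf{m}}\hat{\otimes}\mc{B}_{\mbf{m}})_{\bm{\alpha},-\bm{\alpha}},\qquad J_{\mbf{0}}=1,
\end{align*}
which is the universal form of an upper-triangular operator. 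The ABRR equation can be rewritten as
\begin{align*}
\mathrm{Ad}(z_{(1)}^{-1}q^{\Omega})^{-1}\bigl(J^+_{\mbf{m}}(z)\bigr)=\bigl(R^+_{\mbf{m}}\bigr)^{-1}J^+_{\mbf{m}}(z)\qquad\text{or, equivalently,}\qquad J^+_{\mbf{m}}-\mathrm{Ad}(z_{(1)}^{-1}q^{\Omega})(J^+_{\mbf{m}})=(R^+_{\mbf{m}}-1)\,\mathrm{Ad}(\cdots)(J^+_{\mbf{m}}).
\end{align*}
I will work with the second form, where $\mathrm{Ad}(X)(Y)=XYX^{-1}$.

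\textbf{Step 1: conjugation on a graded piece.} On a component of bidegree $(\bm{\alpha},-\bm{\alpha})$, conjugation by $z_{(1)}^{-1}$ multiplies by $z^{-\bm{\alpha}}$. Using the commutation relations of $h_{i,0}^{\pm}=e^{\pm\hbar H_i}$ with elements of given hdeg, together with the definition \eqref{Cartan-R-matrix} of $\Omega$, conjugation by $q^{\Omega}$ multiplies by a Cartan factor of the form $h^{(1)}_{\bm{\alpha}}\bigl(h^{(2)}_{\bm{\alpha}}\bigr)^{-1}$ times a scalar built from $\gamma_{\bm{\alpha},\bm{\alpha}}$. The degree-$\bm{\alpha}$ part of the equation then reads
\begin{align*}
J_{\bm{\alpha}}-z^{-\bm{\alpha}}c_{\bm{\alpha}}J_{\bm{\alpha}}=\sum_{\mbf{0}<\mbf{n}\leq\bm{\alpha}}R_{\mbf{m}|\mbf{n}}\,z^{-(\bm{\alpha}-\mbf{n})}c_{\bm{\alpha}-\mbf{n}}J_{\bm{\alpha}-\mbf{n}},
\end{align*}
where $c_{\bm{\alpha}}$ is an invertible Cartan-valued factor.

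\textbf{Step 2: recursion.} For $\bm{\alpha}\neq\mbf{0}$ the operator $1-z^{-\bm{\alpha}}c_{\bm{\alpha}}$ is invertible for generic $z$; it is a rational function of $z$ with values in the Cartan part, so we may divide by it. Since the right-hand side involves only $J_{\bm{\beta}}$ with $\bm{\beta}<\bm{\alpha}$, induction on $|\bm{\alpha}|$ determines every $J_{\bm{\alpha}}$ uniquely. This gives existence and uniqueness simultaneously, and invertibility follows because $J_{\mbf{0}}=1$ with the remaining terms of the series strictly triangular. The lower-triangular $J^-_{\mbf{m}}$ is obtained in exactly the same way from the second equation, expanding $R^-_{\mbf{m}}=(R^+_{\mbf{m}})_{21}$. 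Each $J_{\bm{\alpha}}$ lies in $\mc{B}_{\mbf{m}}\otimes\mc{B}_{\mbf{m}}$, tensored with the Cartan part and rational functions of $z$, because the $R_{\mbf{m}|\mbf{n}}$ do. The infinite sum makes sense in the completion $\mc{B}_{\mbf{m}}\hat{\otimes}\mc{B}_{\mbf{m}}$, and on modules in $\mc{O}$ it is locally finite, since weights are bounded above.

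\textbf{Step 3: antipode relation (main obstacle).} I apply $S_{\mbf{m}}\otimes S_{\mbf{m}}$ to the transpose of the equation for $J^-_{\mbf{m}}$, using three facts:
\begin{itemize}
\item $(S_{\mbf{m}}\otimes S_{\mbf{m}})(R^+_{\mbf{m}}q^{\Omega})=R^+_{\mbf{m}}q^{\Omega}$, a standard property of universal $R$-matrices of the Drinfeld double $\mc{B}_{\mbf{m}}$;
\item $S_{\mbf{m}}$ is an antiautomorphism, so it reverses the order of products;
\item $S_{\mbf{m}}$ fixes $z_{(1)}$ and $q^{\Omega}$ up to inverting Cartan elements.
\end{itemize}
The image should then satisfy the first ABRR equation and be strictly upper triangular, so uniqueness forces it to equal $J^+_{\mbf{m}}(z)$. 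The delicate point is tracking how $S_{\mbf{m}}$ acts on the Cartan factors $q^{\Omega}$ and $z_{(1)}$: the separation $R_{\mbf{m}}^+$ versus $q^{\Omega}R_{\mbf{m}}^+$ and the reversal of order must combine to reproduce exactly the form $R^+J z_{(1)}^{-1}q^{\Omega}=z_{(1)}^{-1}q^{\Omega}J$. If the constants do not match directly, I would first conjugate by $q^{\Omega}$, using the relation $(S\otimes S)(q^{\Omega})=q^{\Omega}$, and then invoke uniqueness.
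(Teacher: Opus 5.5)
Your Steps 1--2 follow the paper's argument. The paper also expands in the triangular grading, divides the degree-$\bm{\alpha}$ component by the Cartan-valued factor $z^{-\bm{\alpha}}q^{l}-1$, and gets existence and uniqueness by induction (it writes this out for $J^-_{\mbf{m}}$). For invertibility the paper shows that $(J^{\pm}_{\mbf{m}})^{-1}$ solve ABRR-type equations with $(R^{\pm}_{\mbf{m}})^{-1}$ and invokes uniqueness again. Your observation that a triangular series with constant term $1$ is invertible in the completion is shorter and suffices. One small slip: neither of your displayed rewritings is equivalent to the first equation. That equation says $\mathrm{Ad}(z_{(1)}^{-1}q^{\Omega})(J^+_{\mbf{m}})=R^+_{\mbf{m}}J^+_{\mbf{m}}$, so the degree-$\bm{\alpha}$ equation is $(z^{-\bm{\alpha}}c_{\bm{\alpha}}-1)J_{\bm{\alpha}}=\sum_{\mbf{0}<\mbf{n}\leq\bm{\alpha}}R_{\mbf{m}|\mbf{n}}J_{\bm{\alpha}-\mbf{n}}$, with the Cartan factor placed consistently. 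The recursion is unaffected.

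The genuine gap is Step 3, which is also the one part of the statement the paper's proof never addresses. If you actually run your plan, it does not give the stated identity.

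\begin{itemize}
\item Since $S_{\mbf{m}}\otimes S_{\mbf{m}}$ is anti-multiplicative and fixes $q^{\Omega}$, your first bullet forces $(S_{\mbf{m}}\otimes S_{\mbf{m}})(R^+_{\mbf{m}})=q^{-\Omega}R^+_{\mbf{m}}q^{\Omega}$, not $R^+_{\mbf{m}}$. With the paper's ordering $q^{\Omega}R^+_{\mbf{m}}$ the conjugation goes the other way.
\item Flip the second ABRR equation. This turns $q^{\Omega}$ into $q^{\Omega_{21}}$, which equals $q^{\Omega}$ only if $\Pi$ is symmetric, and that is not assumed.
\item Apply $S_{\mbf{m}}\otimes S_{\mbf{m}}$, treating $z_{(2)}$ as a group-like Cartan element and using $\mathrm{Ad}(z_{(2)})=\mathrm{Ad}(z_{(1)}^{-1})$ on weight-zero elements. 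Write $\tilde{J}=(S_{\mbf{m}}\otimes S_{\mbf{m}})\bigl((J^-_{\mbf{m}})_{21}\bigr)$. What then satisfies the first ABRR equation is $q^{\Omega}\tilde{J}q^{-\Omega}$, not $\tilde{J}$.
\item Uniqueness therefore gives $\tilde{J}=q^{-\Omega}J^+_{\mbf{m}}q^{\Omega}$.
\end{itemize}

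By your own Step 1, this conjugation multiplies every $J_{\bm{\alpha}}$ with $\bm{\alpha}\neq\mbf{0}$ by the nontrivial Cartan factor $c_{\bm{\alpha}}^{-1}$. So $\tilde{J}\neq J^+_{\mbf{m}}$ in general. Your fallback of conjugating by $q^{\Omega}$ before invoking uniqueness proves exactly this conjugated identity, not the one stated.

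To get the statement as written you must fix conventions under which the extra conjugation cancels, and then check it. The relevant choices are where $q^{\Omega}$ sits in the two ABRR equations, how $S_{\mbf{m}}$ acts on $z$, and whether $\Omega$ is symmetric. As it stands, Step 3 is a hope rather than a proof.
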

\begin{proof}
Here we write the second ABRR equation as:
\begin{align*}
\text{Ad}_{z_{(1)}q^{-\Omega}}(J_{\mbf{m}}^-(z))=J_{\mbf{m}}^-(z)R_{\mbf{m}}^{-}
\end{align*}

Now since the slope subalgebra universal $R$-matrix $R_{\mbf{m}}^-$ is upper-triangular,  the fusion operator $J_{\mbf{m}}^-(z)$ will also be upper-triangular.

Now given a decomposition of the universal $R$-matrix and the fusion operator as:
\begin{align*}
J_{\mbf{m}}^{-}(z)=1+\sum_{\mbf{n}\in\mbb{N}^I}J_{\mbf{m},\mbf{n}}^-(z),\qquad R_{\mbf{m}}^-=1+\sum_{\mbf{n}\in\mbb{N}^I}R_{\mbf{m},\mbf{n}}^-
\end{align*}
Thus the $\mbf{n}$-th component of the ABRR equation can be written as:
\begin{align*}
J_{\mbf{m},\mbf{n}}^-(z)z^{-\mbf{n}}q^{l}=J_{\mbf{m},\mbf{n}}^-(z)+\sum_{\substack{\mbf{n}_1+\mbf{n}_2=\mbf{n}\\\mbf{n}_1<\mbf{n}}}J_{\mbf{m},\mbf{n}_1}^-(z)R_{\mbf{m},\mbf{n}_2}^-
\end{align*}

Hence
\begin{equation*}
J_{\mbf{m},\mbf{n}}^-(z)=\frac{1}{z^{-\mbf{n}}q^l-1}\sum_{\substack{\mbf{n}_1+\mbf{n}_2=\mbf{n}\\\mbf{n}_1<\mbf{n}}}J_{\mbf{m},\mbf{n}_1}^-(z)R_{\mbf{m},\mbf{n}_2}^-.
\end{equation*}

Thus $J_{\mbf{m},\mbf{n}}^-(z)$ is uniquely determined. For the invertibility, Note that the universal $R$-matrix $R_{\mbf{m}}^{\pm}$ is invertible and the inverse $(J_{\mbf{m}}^{\pm})^{-1}$ can be made into the solution of the following equation:
\begin{equation*}
z_{(1)}^{-1}q^{\Omega}(J_{\mbf{m}}^+(z))^{-1}=(J_{\mbf{m}}^+(z))^{-1}(R_{\mbf{m}}^{+})^{-1}z_{(1)}^{-1}q^{\Omega},\qquad(J_{\mbf{m}}^-(z))^{-1}z_{(1)}^{-1}q^{\Omega}=z_{(1)}^{-1}q^{\Omega}(R_{\mbf{m}}^-)^{-1}(J_{\mbf{m}}^-(z))^{-1}
\end{equation*}
and mimicking the above procedure, the solution $(J_{\mbf{m}}^{\pm}(z))^{-1}$ are of the same strictly upper(lower)-triangular type and also unique, which is just the inverse of the operator $J_{\mbf{m}}^{\pm}(z)$ respectively.

\end{proof}

We now consider the universal $R$-matrix $R_{\mbf{m}}^{\pm}(a):=(D_{a}\otimes\text{Id})R_{\mbf{m}}^{\pm}$ with the equivariant parametre $a$ as defined in Section \ref{sub:equivariant_parametres_and_automorphisms}. We denote the difference operator $T_a$ with respect to $a$ as:
\begin{align}\label{difference-operator-for-equivariant-variables}
T_af(a)=f(pa).
\end{align}

Let us consider the shifted version for the fusion operator:
\begin{align*}
\mbf{J}_{\mbf{m}}^{\pm}(z,a)=(D_a\otimes\text{Id})J_{\mbf{m}}^{\pm}(zp^{\mbf{m}}).
\end{align*}

\begin{prop}\label{wall-KZ:proposition}
$\mbf{J}_{\mbf{m}}^{\pm}(z,a)$ is the unique upper/lower-triangular solution to the following \textbf{wall Knizhnik-Zamolodchikov equations}:
\begin{align}\label{wall-KZ-equation}
T_a(R_{\mbf{m}}^+(a)\mbf{J}_{\mbf{m}}^+(z,a))z_{(1)}^{-1}q^{\Omega}=z_{(1)}^{-1}q^{\Omega}\mbf{J}_{\mbf{m}}^+(z,a),\qquad z_{(1)}^{-1}q^{\Omega}T_a(\mbf{J}_{\mbf{m}}^-(z,a)R_{\mbf{m}}^-(a))=\mbf{J}_{\mbf{m}}^-(z,a)z_{(1)}^{-1}q^{\Omega}.
\end{align}
\end{prop}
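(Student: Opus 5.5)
The plan is to deduce the wall KZ equations from the ABRR equations of Proposition \ref{ABRR-equation}. The link is to show that, on the slope subalgebra $\mc{B}_{\mbf{m}}$, the combination "apply $D_a$ and shift $a\mapsto pa$" acts on each graded piece by the same scalar as shifting the Kähler parameter by $p^{\mbf{m}}$.

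\textbf{Step 1: the grading observation.} Every homogeneous element of $\mc{B}^{\pm}_{\mbf{m}|\pm\mbf{n}}$ has vertical degree $\pm\mbf{m}\cdot\mbf{n}$. Hence on such a piece
\begin{align*}
T_aD_a=p^{\pm\mbf{m}\cdot\mbf{n}}D_a .
\end{align*}
The Cartan elements $h_{i,0}^{\pm}$ carry vertical degree $0$, so they are untouched.

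Now expand $R^+_{\mbf{m}}=\sum_{\mbf{n}}R^+_{\mbf{m},\mbf{n}}$ and $J^+_{\mbf{m}}(z)=\sum_{\mbf{n}}J^+_{\mbf{m},\mbf{n}}(z)$ by horizontal degree $\mbf{n}$ of the first tensor factor. The first factor of the $\mbf{n}$-th component lies in $\mc{B}_{\mbf{m}|\mbf{n}}$, up to Cartan factors. So applying $T_a\circ(D_a\otimes\mathrm{Id})$ multiplies the $\mbf{n}$-th component by $p^{\mbf{m}\cdot\mbf{n}}a^{\mbf{m}\cdot\mbf{n}}$.

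On the other side, $z_{(1)}$ acts on $V_{\mbf{n}}$ by $z^{\mbf{n}}$. Therefore conjugating by $z_{(1)}$ with $z$ replaced by $zp^{\mbf{m}}$ also produces the factor $p^{\mbf{m}\cdot\mbf{n}}$ on the $\mbf{n}$-th graded component. This is exactly how the shift $z\mapsto zp^{\mbf{m}}$ enters the definition of $\mbf{J}_{\mbf{m}}^{\pm}$.

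\textbf{Step 2: rewrite ABRR componentwise.} As in the proof of Proposition \ref{ABRR-equation}, write the first ABRR equation degree by degree. It has the form $\mathrm{Ad}_{z_{(1)}^{-1}q^{\Omega}}$ acting on each component, giving the recursion
\begin{align*}
(\text{scalar}(z^{\mbf{n}}q^{l})-1)\,J^+_{\mbf{m},\mbf{n}}=\sum R^+_{\mbf{m},\mbf{n}_2}J^+_{\mbf{m},\mbf{n}_1}.
\end{align*}
Apply $D_a\otimes\mathrm{Id}$ and substitute $z\mapsto zp^{\mbf{m}}$. The equation for $\mbf{J}^+_{\mbf{m}}(z,a)$ then has coefficients carrying an extra $p^{\mbf{m}\cdot\mbf{n}}$. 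By Step 1, this extra factor is precisely what converts $R^+_{\mbf{m}}(a)\mbf{J}^+_{\mbf{m}}(z,a)$ into $T_a\big(R^+_{\mbf{m}}(a)\mbf{J}^+_{\mbf{m}}(z,a)\big)$.

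Concretely, I would verify the identity
\begin{align*}
T_a\big((D_a\otimes\mathrm{Id})X\big)=(D_a\otimes\mathrm{Id})\big(z_{(1)}^{-1}\,(p^{\mbf{m}})_{(1)}\,X\,(p^{\mbf{m}})_{(1)}^{-1}z_{(1)}\big)
\end{align*}
for $X$ in $\mc{B}_{\mbf{m}}\hat\otimes\mc{B}_{\mbf{m}}$, suitably arranged. Inserting this into ABRR at $zp^{\mbf{m}}$ should yield \eqref{wall-KZ-equation}. The lower-triangular case is handled identically using the second ABRR equation, with vertical degrees of opposite sign.

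\textbf{Step 3: uniqueness.} Run the same recursion directly on \eqref{wall-KZ-equation} with an upper-triangular ansatz $\mbf{J}=1+\sum_{\mbf{n}}\mbf{J}_{\mbf{n}}$ with $\mbf{J}_{\mbf{n}}\in\mc{B}_{\mbf{m}|\mbf{n}}\otimes\mc{B}_{\mbf{m}|-\mbf{n}}\otimes\mbb{K}[a^{\pm1}]$.

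By Step 1, $T_a$ acts on $\mbf{J}_{\mbf{n}}$ as a scalar multiple. So the degree-$\mbf{n}$ equation reads $(c_{\mbf{n}}(z)-1)\mbf{J}_{\mbf{n}}=(\text{lower terms})$, where $c_{\mbf{n}}(z)$ is a nonconstant monomial in $z$ times a power of $q$. For generic $z$ this factor is invertible, so each $\mbf{J}_{\mbf{n}}$ is determined recursively.

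\textbf{Main obstacle.} The delicate point is the bookkeeping in Step 1 and Step 2. One must check that the vertical degree of the first tensor factor really equals $\mbf{m}\cdot(\text{hdeg})$ for all terms of $R^{\pm}_{\mbf{m}}$ and $J^{\pm}_{\mbf{m}}$. One must also fix the sign conventions of $D_a$ on the negative part and of $z_{(1)}$ on $V_{\mbf{n}}=V_{\bm\lambda\bm\gamma^{-\mbf{n}}}$. A wrong sign would turn $zp^{\mbf{m}}$ into $zp^{-\mbf{m}}$. I would settle these conventions first by testing on a single degree-$\mbf{e}_i$ component.
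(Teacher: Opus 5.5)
Your proposal is correct and follows the paper's own proof. The paper also expands $\mbf{J}^{\pm}_{\mbf{m}}(z,a)=1+\sum_{\mbf{n}}\mbf{J}^{\pm}_{\mbf{m},\mbf{n}}(z)a^{\mbf{m}\cdot\mbf{n}}$ and notes that $T_a$ contributes the factor $p^{\mbf{m}\cdot\mbf{n}}$ on the $\mbf{n}$-th component. It then reads the resulting recursion as the ABRR recursion with $z$ replaced by $zp^{\mbf{m}}$, which gives existence and uniqueness together.

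One small correction: in your displayed ``concrete identity'' the conjugation by $z_{(1)}$ should be dropped. An extra $z_{(1)}$ would add a factor $z^{\pm\mbf{n}}$, so $T_a\circ(D_a\otimes\mathrm{Id})$ matches conjugation by $(p^{\mbf{m}})_{(1)}^{\pm1}$ alone. The $z$-dependence enters only through the ABRR equation evaluated at $zp^{\mbf{m}}$.
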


\begin{proof}
The second equation is equivalent to the following equation:
\begin{align*}
\text{Ad}_{z_{(1)}q^{-\Omega}}\mbf{J}_{\mbf{m}}^-(z,a)=\mbf{J}^{-}_{\mbf{m}}(z,pa)R_{\mbf{m}}^-(pa).
\end{align*}

Let us look for its solution, we still do the decomposition:
\begin{align*}
\mbf{J}_{\mbf{m}}^-(z,a)=1+\sum_{\mbf{n}}\mbf{J}_{\mbf{m},\mbf{n}}^-(z)a^{\mbf{m}\cdot\mbf{n}},\qquad R_{\mbf{m}}^-(pa)=1+\sum_{\mbf{n}}R_{\mbf{m},\mbf{n}}^-p^{\mbf{m}\cdot\mbf{n}}a^{\mbf{m}\cdot\mbf{n}}.
\end{align*}

By the ABRR equation in \ref{ABRR-equation} we have that:
\begin{align*}
\mbf{J}_{\mbf{m},\mbf{n}}^-(z)z^{-\mbf{n}}q^{l}=\mbf{J}_{\mbf{m},\mbf{n}}^-(z)p^{\mbf{m}\cdot\mbf{n}}+p^{\mbf{m}\cdot\mbf{n}}\sum_{\substack{\mbf{n}_1+\mbf{n}_2=\mbf{n}\\\mbf{n}_1<\mbf{n}}}\mbf{J}_{\mbf{m},\mbf{n}_1}^-(z)R_{\mbf{m},\mbf{n}_2}^-.
\end{align*}

Hence
\begin{align*}
\mbf{J}_{\mbf{m},\mbf{n}}^-(z)=\frac{1}{z^{-\mbf{n}}q^lp^{-\mbf{m}\cdot\mbf{n}}-1}\sum_{\substack{\mbf{n}_1+\mbf{n}_2=\mbf{n}\\\mbf{n}_1<\mbf{n}}}\mbf{J}_{\mbf{m},\mbf{n}_1}^-(z)R_{\mbf{m},\mbf{n}_2}^-.
\end{align*}

Thus we found that:
\begin{align*}
\mbf{J}_{\mbf{m}}^-(z,a)=(D_a\otimes\text{Id})J_{\mbf{m}}^-(zp^{\mbf{m}}).
\end{align*}

Similar computation shows that $\mbf{J}_{\mbf{m}}^+(z,a)$ is the solution for the first equation in \eqref{wall-KZ-equation}
\end{proof}

\subsection{Quantum lattice operator}
Following the construction in \cite{OS22}, fix the slope $\mbf{m}\in\mbb{R}^n$, we define the \textbf{dynamical monodromy operator} $\mbf{B}_{\mbf{m}}(z)$ as:
\begin{align}\label{defn-of-monodromy}
\mbf{B}_{\mbf{m}}(z):=\textbf{m}((1\otimes S_{\mbf{m}})(\mbf{J}_{\mbf{m}}^{+}(z)^{-1}))|_{z\mapsto zq^{\kappa}}
\end{align}
it lives in the completion of $\mc{B}_{\mbf{m}}(z_1,\cdots,z_I)$. Here $q^{\kappa}$ is defined by the following: When the operator $\kappa$ acts on $V_{\bm{\lambda}\bm{\gamma}^{-\mbf{n}}}$, it can be represented as a linear functional on $\mbb{N}^I$ such that:
\begin{align}\label{kappa-linear-functional}
\kappa=\bm{\lambda}-\Pi\mbf{n},\qquad\Pi=(\pi_{ij})_{i,j\in I}
\end{align}
and here $\Pi=(\pi_{ij})_{i,j\in I}$ is the matrix defined in \eqref{renewed-cartan-bialgebra-pairing}.

Now we fix a point $s\in\mbb{R}^n$ and an integer point $\mc{L}\in\mbb{Z}^n$, we define the \textbf{quantum lattice operator} $\mbf{B}^s_{\mc{L}}(z)$ starting from $s$ as:
\begin{align}\label{defn-of-quantum-lattice-operator}
\mbf{B}^s_{\mc{L}}(z)=\tau_{\mc{L}}(\prod_{\mbf{m}\in[s,s-\mc{L})}^{\rightarrow}\mbf{B}_{\mbf{m}}(z))
\end{align}
The quantum lattice operator lives in the completion of $\mbf{U}_{Q}(z_1,\cdots,z_{I})$. 
Given each weight subspace $V_{\mbf{n}}\subset V$ of a representation $V\in\mc{O}$ in the category $\mc{O}$. The quantum difference operator $\mbf{B}_{\mc{L}}^s(z)$ preserves the weight $V_{\mbf{n}}$. It turns out that the well-definedness of the quantum lattice operator \eqref{defn-of-quantum-lattice-operator} by the following proposition:

\begin{prop}\label{finite-product-result:proposition}
Fix the weighted subspace $V_{\mbf{n}}$ for a module $V$ in the category $\mc{O}$. The action of the quantum lattice operator $\mbf{B}_{\mc{L}}^s(z)$  on $V_{\mbf{n}}$ is reduced to the finite product of the dynamical monodromy operator $\mbf{B}_{\mbf{m}}(z)$ for $\mbf{m}$ on the arbitrary cutty-corner curve between $s$ and $s-\mc{L}$.
\end{prop}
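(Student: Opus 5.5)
The plan is to show that, on a fixed weight space $V_{\mbf{n}}$, all but finitely many of the factors $\mbf{B}_{\mbf{m}}(z)$ with $\mbf{m}$ on the segment (or catty-corner curve) from $s$ to $s-\mc{L}$ act as the identity. The argument has three steps.

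\emph{Step 1: expansion of $\mbf{B}_{\mbf{m}}(z)$.} By construction, $\mbf{J}_{\mbf{m}}^+(z)$ is strictly upper triangular with $\mbf{J}^+_{\mbf{m}}(z)=1+\sum_{\mbf{k}>0}J^+_{\mbf{m},\mbf{k}}(z)$. By the recursion in the proof of Proposition \ref{ABRR-equation}, each component satisfies
\begin{align*}
J^+_{\mbf{m},\mbf{k}}(z)\in\mc{B}^{+}_{\mbf{m}|\mbf{k}}\mc{B}^{\geq,0}_{\mbf{m}}\otimes\mc{B}^{-}_{\mbf{m}|-\mbf{k}}\mc{B}^{\leq,0}_{\mbf{m}}
\end{align*}
(up to Cartan factors). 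The same holds for its inverse. Applying $\mbf{m}\circ(1\otimes S_{\mbf{m}})$ gives
\begin{align*}
\mbf{B}_{\mbf{m}}(z)=1+\sum_{\mbf{k}>0}b_{\mbf{m},\mbf{k}}(z),
\end{align*}
where $b_{\mbf{m},\mbf{k}}(z)$ is a sum of products of the form (element of $\mc{B}^+_{\mbf{m}|\mbf{k}}$)$\cdot$(element of $\mc{B}^-_{\mbf{m}|-\mbf{k}}$) times Cartan elements. The key point is that $\mc{B}^{\pm}_{\mbf{m}|\pm\mbf{k}}$ is nonzero only when $\mbf{m}\cdot\mbf{k}\in\mbb{Z}$.

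\emph{Step 2: only finitely many $\mbf{k}$ act on $V_{\mbf{n}}$.} Since $V\in\mc{O}$, its weights lie in finitely many cones $\bm{\lambda}^j\bm{\gamma}^{-\mbb{N}^I}$. An element of $\mc{B}^-_{\mbf{m}|-\mbf{k}}$ sends $V_{\mbf{n}}$ into a weight shifted by $\mbf{k}$, and the subsequent element of $\mc{B}^+_{\mbf{m}|\mbf{k}}$ brings it back. Because the term $b_{\mbf{m},\mbf{k}}$ is normally ordered with $f$'s acting first, it kills $V_{\mbf{n}}$ unless $V_{\mbf{n}-\mbf{k}}$ (the intermediate weight, moving toward the highest weight) is nonzero. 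That intermediate weight space is nonzero only for the finitely many $\mbf{k}\le\mbf{n}$ relative to the highest weights. If instead the ordering puts $e$ first, I would use the triangularity of $\mbf{J}^+$ to get the same bound. So there is a finite set $K_{\mbf{n}}\subset\mbb{N}^I\setminus\{0\}$ such that $b_{\mbf{m},\mbf{k}}|_{V_{\mbf{n}}}=0$ for every $\mbf{m}$ and every $\mbf{k}\notin K_{\mbf{n}}$.

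\emph{Step 3: finitely many walls.} For $\mbf{k}\in K_{\mbf{n}}$, the condition $\mbf{m}\cdot\mbf{k}\in\mbb{Z}$ cuts out a family of parallel hyperplanes. A catty-corner curve between $s$ and $s-\mc{L}$ is compact and has $\mbf{m}\cdot\mbf{k}$ strictly monotone along it, because $\mbf{k}\ge0$, $\mbf{k}\neq0$, and every $\mu_i$ is strictly increasing. Hence it meets only finitely many of these hyperplanes, each at a single point. Taking the union over the finite set $K_{\mbf{n}}$ gives finitely many slopes $\mbf{m}$ on the curve at which $\mbf{B}_{\mbf{m}}(z)|_{V_{\mbf{n}}}\neq1$. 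The ordered product therefore reduces to a finite ordered product on $V_{\mbf{n}}$.

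Finally, $\tau_{\mc{L}}$ maps $\mc{B}_{\mbf{m}}$ to $\mc{B}_{\mbf{m}+\mc{L}}$ and preserves the $\mbb{N}^I$-grading, so it does not affect this finiteness.

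I expect Step 2 to be the main obstacle. It requires tracking the ordering in $\mbf{m}\circ(1\otimes S_{\mbf{m}})$ and the action of the antipode, so as to confirm that each term passes through an intermediate weight space closer to the highest weight. That is what makes the set $K_{\mbf{n}}$ finite. The weight-finiteness built into the definition of $\mc{O}$ is what I would rely on there.
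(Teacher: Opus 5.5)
Your proposal is correct and follows essentially the same route as the paper. The paper writes $\mbf{B}_{\mbf{m}}(z)=\sum_{\mbf{k}:\,\mbf{m}\cdot\mbf{k}\in\mbb{Z}}a_{\mbf{k}}(z)E_{\mbf{k}}E_{-\mbf{k}}$, where the factor $E_{-\mbf{k}}$ sends $V_{\mbf{n}}$ to $V_{\mbf{n}-\mbf{k}}$ and acts first, so only $\mbf{k}\le\mbf{n}$ contribute on $V_{\mbf{n}}$, and then observes that the curve from $s$ to $s-\mc{L}$ meets only finitely many of the hyperplanes $\mbf{m}\cdot\mbf{k}\in\mbb{Z}$ for these $\mbf{k}$. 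Your Step 3 (strict monotonicity of $\mbf{m}\cdot\mbf{k}$ along a catty-corner curve, so each hyperplane is crossed exactly once) is a slightly more careful version of the paper's ``finite length'' remark. Your fallback for the case where the $e$-factor acts first is unnecessary, and triangularity alone would not bound $\mbf{k}$ there anyway.
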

\begin{proof}
By definition one can see that the dynamical monodromy operator $\mbf{B}_{\mbf{m}}(z)$ is written in the following way:
\begin{align*}
\mbf{B}_{\mbf{m}}(z)=\sum_{\mbf{k}\in\mbb{N}^I}^{\mbf{k}\cdot\mbf{m}\in\mbb{Z}}a_{\mbf{k}}(z)E_{\mbf{k}}E_{-\mbf{k}}
\end{align*}
which maps $V_{\mbf{n}}$ to $V_{\mbf{n}-\mbf{k}}$ to $V_{\mbf{n}}$ for some $\mbf{k}\in\mbb{N}^I$. Moreover, if $\mbf{k}>\mbf{n}$, and the corresponding operator in $\mbf{B}_{\mbf{m}}(z)$ is zero.

Now we consider the set of points $M_{\mbf{n}}$ of $\mbf{m}\in\mbb{R}^I$ such that $\mbf{n}'\cdot\mbf{m}\in\mbb{Z}$ for some $\mbf{n}'\leq\mbf{n}$, and we can see that this set is a union of hyperplanes in $\mbb{R}^I$. Given a curve of finite length, it is easy to see the curve only intersects with finitely many hyperplanes.

Although the quantum difference operator $\mbf{B}_{\mc{L}}^s$ might be infinite product of the dynamical monodromy operator $\mbf{B}_{\mbf{m}}(z)$, we can see that only the dynamical monodromy operator $\mbf{B}_{\mbf{m}}(z)$ that satisfies the property that the corresponding slope subalgebra $\mc{B}_{\mbf{m}}$ contains the elements $E_{-\mbf{k}}$ with $\mbf{k}\leq\mbf{n}$ and $\mbf{k}\cdot\mbf{m}\in\mbb{Z}$ would give nontrivial action on $V_{\mbf{n}}$. This requires that $\mbf{m}$ should lie in $M_{\mbf{n}}$. Since we are choosing the dynamical monodromy operator $\mbf{B}_{\mbf{m}}(z)$ of the slope $\mbf{m}$ on a cutty-corner curve from $s$ to $s-\mc{L}$, which is a curve of finite length. Thus from the above argument, there are only finitely many $\mbf{m}$ which gives nontrivial action.
\end{proof}

\subsection{Basic properties for the dynamical monodromy operators}

Here we give the basic properties for the dynamical monodromy operator. The argument and the statement closely follow \cite{OS22}.

We define $J_{\mbf{m}}^{\pm}(z)^{12}=J_{\mbf{m}}^{\pm}(z)\otimes1$, $J_{\mbf{m}}^{\pm}(z)^{23}=1\otimes J_{\mbf{m}}^{\pm}(z)$, $J_{\mbf{m}}^{\pm}(z)^{12,3}=(\Delta_{\mbf{m}}\otimes1)J_{\mbf{m}}^{\pm}(z)$, $J_{\mbf{m}}^{\pm}(z)^{1,23}=(1\otimes\Delta_{\mbf{m}})J_{\mbf{m}}^{\pm}(z)$.

\begin{prop}
The operator $J_{\mbf{m}}^{\pm}(z)$ satisfies the dynamical cocycle conditions:
\begin{equation*}
\begin{aligned}
&J_{\mbf{m}}^{+}(z)^{12,3}J_{\mbf{m}}^{+}(zq^{\kappa_{(3)}})^{12}=J_{\mbf{m}}^{+}(z)^{1,23}J_{\mbf{m}}^{+}(zq^{-\kappa_{(1)}})^{23}\\
&J_{\mbf{m}}^{-}(zq^{\kappa_{(3)}})^{12}J_{\mbf{m}}^{-}(z)^{12,3}=J_{\mbf{m}}^{-}(zq^{-\kappa_{(1)}})^{23}J_{\mbf{m}}^{-}(z)^{1,23}
\end{aligned}
\end{equation*}
\end{prop}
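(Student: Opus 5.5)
The plan is the standard uniqueness argument for ABRR-type equations, following \cite{ABRR97}, \cite{EV02} and the geometric version in \cite{OS22}. I treat the first identity; the second is obtained in the same way, or by applying the antipode relation $S_{\mbf{m}}\otimes S_{\mbf{m}}((J^-_{\mbf{m}}(z))_{21})=J^+_{\mbf{m}}(z)$ from Proposition \ref{ABRR-equation} together with a flip of tensor factors. Set
\begin{align*}
X:=J_{\mbf{m}}^{+}(z)^{12,3}J_{\mbf{m}}^{+}(zq^{\kappa_{(3)}})^{12},\qquad Y:=J_{\mbf{m}}^{+}(z)^{1,23}J_{\mbf{m}}^{+}(zq^{-\kappa_{(1)}})^{23}.
\end{align*}
Both live in the completion of $\mc{B}_{\mbf{m}}^{\otimes 3}$. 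Both are triangular in the three-fold sense: with respect to the $\mbb{N}^I$-grading, each is the identity plus terms that raise the weight in the earlier tensor factors and lower it in the later ones. This holds because $\Delta_{\mbf{m}}$ preserves $\mbb{N}^I$-degrees and each $J^+$ is strictly upper triangular. I will show that $X$ and $Y$ solve the same three-fold ABRR equation and that this equation has a unique solution of this triangular form.

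\textbf{Step 1: the three-fold equation.} Using $(\Delta_{\mbf{m}}\otimes 1)R^+_{\mbf{m}}=R^{+}_{13}R^{+}_{23}$ (up to the Cartan factors absorbed into $q^{\Omega}$) and $(1\otimes\Delta_{\mbf{m}})R^+_{\mbf{m}}=R^+_{13}R^+_{12}$, I apply $\Delta_{\mbf{m}}\otimes 1$ and $1\otimes\Delta_{\mbf{m}}$ to \eqref{ABRR-eqn}. The natural candidate equation is
\begin{align*}
R^{+}_{13}R^{+}_{12}\, Z\, z_{(1)}^{-1}q^{\Omega_{12}+\Omega_{13}}=z_{(1)}^{-1}q^{\Omega_{12}+\Omega_{13}}\, Z ,
\end{align*}
possibly together with an $R_{23}$-twisted form; I will fix its exact shape by the computation itself. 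For $Y$ this follows directly. The first factor satisfies the $(1,23)$-coproduct of ABRR. The factor $J^{23}(zq^{-\kappa_{(1)}})$ commutes with $R_{13}R_{12}$ up to conjugation by the ABRR equation in slots $2,3$. The dynamical shift $q^{-\kappa_{(1)}}$ is exactly what is needed to pass $z_{(2)}^{-1}$ through $q^{\Omega_{12}}$, using that $\kappa$ is the linear functional \eqref{kappa-linear-functional} and that $q^{\Omega}$ acts on weight spaces by $q^{\langle\text{wt},\text{wt}\rangle}$. For $X$ I use the ABRR equation for $J^{12}$ with the shifted variable $zq^{\kappa_{(3)}}$, where the shift again compensates $q^{\Omega_{13}}$. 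I then apply the $(12,3)$-coproduct of ABRR and the quasitriangularity identity $R_{12}\Delta_{\mbf{m}}^{op}=\Delta_{\mbf{m}} R_{12}$ to move the $R_{12}$ factor across $J^{12,3}$.

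\textbf{Step 2: uniqueness.} I decompose $Z=\sum_{\bm{\alpha},\bm{\beta}}Z_{\bm{\alpha},\bm{\beta}}$ by the weight shifts in the first two factors, exactly as in the proof of Proposition \ref{ABRR-equation}. Then I solve recursively: each component equals $(z^{-\mbf{n}}q^{l}-1)^{-1}$ times lower-order terms, with $\mbf{n}$ the total weight transferred from factor $1$. For generic $z$ the denominators are nonzero, so the solution with leading term $1$ is unique, and therefore $X=Y$. If the three-fold equation only controls the transfer out of factor $1$, I will use a second equation in slots $2,3$ to pin down the remaining components, proving the two-step uniqueness by induction on the transfer in slot $2$.

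\textbf{Main obstacle.} I expect the main difficulty to be the bookkeeping in Step 1. One must check that the Cartan parts of $\Delta_{\mbf{m}}$ (the $h_{\pm\mbf{k}}$ factors in \eqref{slope-m-coproduct-slope-subalgebra}) combine with $q^{\Omega}$ and the shifts $q^{\pm\kappa}$ so that the same equation arises for $X$ and for $Y$. My first move will be to verify this on the rank-one pieces, meaning elements of the form $F\otimes G$ of fixed degree, computing the commutator of $z_{(i)}^{-1}q^{\Omega}$ with a homogeneous element via the relation $h_{i,0}^{+}X=Xh_{i,0}^{+}\gamma_{\mbf{e}_i,\text{hdeg}X}$.
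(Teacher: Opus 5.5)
Your strategy (show that $X$ and $Y$ solve the same linear three-fold ABRR-type system, then invoke uniqueness) is the right one. However, Step~1 does not go through for $X$, and Step~2 cannot work with the single equation you wrote down.

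\textbf{Why Step~1 fails for $X$.} The only relations you get directly for $J^{12,3}=(\Delta_{\mbf{m}}\otimes 1)J^+_{\mbf{m}}(z)$ come from applying $\Delta_{\mbf{m}}\otimes 1$ to \eqref{ABRR-eqn}. These involve $R_{13}R_{23}$, $q^{\Omega_{13}+\Omega_{23}}$ and $z_{(12)}$ (equivalently $z_{(3)}$ on a fixed total weight). So they control the weight moved into the third factor, not the weight moved out of the first. Quasitriangularity only replaces $R_{12}J^{12,3}R_{12}^{-1}$ by a $q^{\Omega_{12}}$-conjugate of $J^{21,3}$, which satisfies the same kind of slot-$3$ equation. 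Nothing in your list produces the conjugation by $z_{(1)}^{-1}q^{\Omega_{12}+\Omega_{13}}$ that your candidate equation needs.

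The situation is therefore asymmetric. $Y$ satisfies your $z_{(1)}$-equation directly: $J^{1,23}$ does so by $(1\otimes\Delta_{\mbf{m}})$ of \eqref{ABRR-eqn}, and $J^{23}(zq^{-\kappa_{(1)}})$ simply commutes with $z_{(1)}^{-1}q^{\Omega_{12}+\Omega_{13}}$, so the shift plays no role there. $X$, by contrast, satisfies directly only the slot-$3$ equation. Neither cross-verification follows from manipulation.

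\textbf{Why Step~2 fails with one equation.} Components of $Z$ that move weight only between factors $2$ and $3$ are fixed by $\text{Ad}(z_{(1)}^{-1}q^{\Omega_{12}+\Omega_{13}})$. The recursion therefore has coefficient $0$ on them, and the $z_{(1)}$-equation alone leaves them undetermined, as you suspected.

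\textbf{The missing idea.} The paper, following \cite{OS22}, writes the two equations as fixed-point equations $A_L(Z)=Z$ (slot $1$: $z_{(1)}$, $R_{13}R_{12}$, $q^{\Omega_{12}+\Omega_{13}}$) and $A_R(Z)=Z$ (slot $3$: $z_{(3)}$, $R_{13}R_{23}$, $q^{\Omega_{13}+\Omega_{23}}$). The key fact is that $A_L$ and $A_R$ commute by the Yang--Baxter equation. The argument then runs as follows.
\begin{itemize}
\item $X$ solves $A_R$, hence so does $A_L(X)$.
\item A solution of $A_R$ is determined by its part of degree zero in the third factor. For $X$ this part is $J^+_{\mbf{m}}(zq^{\kappa_{(3)}})^{12}$.
\item $A_L$ fixes this part by the ABRR equation in slots $1,2$ with shifted variable. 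Here one uses $q^{-\Omega_{13}}Z_{\bm{\alpha}}q^{\Omega_{13}}=q^{\langle\kappa_{(3)},\bm{\alpha}\rangle}Z_{\bm{\alpha}}$ for an operator $Z_{\bm{\alpha}}$ moving weight $\bm{\alpha}$ between slots $1$ and $2$. This is where your observation that the shift compensates $q^{\Omega_{13}}$ belongs.
\item Symmetrically, $Y$ solves $A_L$, and through its degree-zero part $J^+_{\mbf{m}}(zq^{-\kappa_{(1)}})^{23}$ in the first factor it also solves $A_R$.
\end{itemize}
Uniqueness then holds for the pair of equations. $A_R$ reduces a solution to its slot-$3$ degree-zero part, on which $A_L$ becomes a two-fold ABRR equation with nonvanishing denominators. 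Without the commutation $[A_L,A_R]=0$ and this degree-zero reduction, your argument does not close.
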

\begin{proof}
The proof is similar to the Theorem $5$ in \cite{OS22}. First note that there is a unique strictly upper-triangular operator $J(z)$ such that:
\begin{equation*}
\begin{aligned}
&J(z)z_{(3)}(q^{\Omega}(R_{\mbf{m}}^{-})^{-1})^{13}(q^{\Omega}(R_{\mbf{m}}^{-})^{-1})^{23}=z_{(3)}q^{\Omega_{13}+\Omega_{23}}J(z)\\
&J(z)z_{(1)}^{-1}(q^{\Omega}(R_{\mbf{m}}^{-})^{-1})^{13}(q^{\Omega}(R_{\mbf{m}}^{-})^{-1})^{12}=z_{(1)}^{-1}q^{\Omega_{13}+\Omega_{12}}J(z).
\end{aligned}
\end{equation*}

We claim that the operator:
\begin{align*}
J(z)=J^-_{\mbf{m}}(zq^{\kappa_{(3)}})^{12}J^-_{\mbf{m}}(z)^{12,3}
\end{align*}
is the solution to the above equation.

We will denote the operator:
\begin{align*}
&A_{R}(X)=q^{-\Omega_{13}-\Omega_{23}}z_{(3)}^{-1}Xz_{(3)}(R_{\mbf{m}}^{-,13})^{-1}(R_{\mbf{m}}^{-,23})^{-1}\\
&A_{L}(X)=q^{-\Omega_{13}-\Omega_{12}}z_{(1)}Xz_{(1)}^{-1}(R_{\mbf{m}}^{-,13})^{-1}(R_{\mbf{m}}^{-,12})^{-1}.
\end{align*}

As in the ABRR equation, the solution is uniquely determined by the degree zero part in the third tensor component. Therefore let us denote the corresponding component of $J(z)$ as $J_0(z)$. It is easy to see that:
\begin{align*}
J_0(z)=J^-_{\mbf{m}}(zq^{\kappa_{(3)}})^{12}
\end{align*}
and we denote the operator:
\begin{align*}
Y_0(z)=q^{-\Omega_{13}-\Omega_{12}}z_{(1)}J^-_{\mbf{m}}(zq^{\kappa_{(3)}})^{12}z_{(1)}^{-1}(R_{\mbf{m}}^{-,13})^{-1}(R_{\mbf{m}}^{-,12})^{-1}
\end{align*}

First note that $J_0(z)=1+\sum_{\mbf{n}}J_{0,\mbf{n}}(z)$ has a weight decomposition. Also note that:
\begin{align*}
q^{-\Omega_{13}}J_{0,\mbf{n}}(z)q^{\Omega_{13}}=q^{m_{\mbf{n}}}J_{0,\mbf{n}}(z),\qquad m_{\mbf{n}}=\langle\mbf{n},\kappa\rangle
\end{align*}
This follows from the following computation:
Consider the tensor product of three simple modules $L(\bm{\psi}_1)\otimes L(\bm{\psi}_2)\otimes L(\bm{\psi}_3)$. If we have the operator of the following form:
\begin{align*}
Z_{\alpha}:L(\bm{\psi}_1)_{\mbf{n}_1}\otimes L(\bm{\psi}_2)_{\mbf{n}_2}\otimes L(\bm{\psi}_3)_{\mbf{n}_3}\rightarrow L(\bm{\psi}_1)_{\mbf{n}_1+\alpha}\otimes L(\bm{\psi}_2)_{\mbf{n}_2-\alpha}\otimes L(\bm{\psi}_3)_{\mbf{n}_3}
\end{align*}
In this we have that:
\begin{align*}
q^{-\Omega_{13}}Z_{\alpha}q^{\Omega_{13}}=q^{\langle\kappa,\alpha\rangle}Z_{\alpha}.
\end{align*}

Since by the computation we have that:
\begin{equation*}
\begin{aligned}
&\Omega(\mbf{m},\mbf{n})-\Omega(\mbf{m}+\alpha,\mbf{n})\\
=&\hbar\sum_{i,j\in I}(\pi^{-1})_{ij}\sum_{k,l\in I}[(m_k\pi_{ik}+\lambda_{1i})(n_l\pi_{jl}+\lambda_{2j})-((m_k+\alpha_k)\pi_{ik}+\lambda_{1i})(n_l\pi_{jl}+\lambda_{2j})]\\
=&-\hbar\sum_{i,j\in I}(\pi^{-1})_{ij}\sum_{k,l\in I}\alpha_k\pi_{ik}(n_l\pi_{jl}+\lambda_{2j})\\
=&-\hbar\sum_{i,j\in I}\sum_{k,l\in I}(\pi^{-1})_{ij}\pi_{ik}\pi_{jl}\alpha_kn_l-\hbar\sum_{i,j\in I}\sum_{k,l\in I}(\pi^{-1})_{ij}\alpha_k\pi_{ik}\lambda_{2j}\\
=&-\hbar\sum_{i,k,l\in I}\delta_{il}\pi_{ik}\alpha_kn_l+\hbar\sum_{j,k,l\in I}\delta_{jk}\alpha_k\lambda_{2j}\\
=&-\hbar\sum_{k,l\in I}\pi_{lk}\alpha_kn_l+\hbar\sum_{k,l}\alpha_k\lambda_{2l}=\hbar(\langle\alpha,\kappa\rangle)
\end{aligned}
\end{equation*}
with $\kappa$ defined in \eqref{kappa-linear-functional}.

Thus we have that:
\begin{align*}
Y_0(z)=q^{-\Omega_{12}}z_{(1)}q^{\kappa_{(3)}}J_{\mbf{m}}^+(zq^{\kappa_{(3)}})^{12}z_{(1)}^{-1}q^{-\kappa_{(3)}}(R_{\mbf{m}}^{-,12})^{-1}.
\end{align*}
which is an equivalent form of the ABRR equation. Hence we have $Y_0(z)=J_0(z)$.

\end{proof}

Now we consider the operator:
\begin{align*}
\tilde{B}_{\mbf{m}}(z):=\mbf{m}((1\otimes S_{\mbf{m}})(J_{\mbf{m}}^+(z)^{-1}))|_{z\mapsto zq^{\kappa}},\qquad B_{\mbf{m}}(z)=\mbf{m}_{21}((S_{\mbf{m}}^{-1}\otimes1)(J_{\mbf{m}}^+(z)^{-1}))|_{z\mapsto zq^{-\kappa}}.
\end{align*}

We have the following theorem on their coproduct:
\begin{thm}
\begin{equation*}
\begin{aligned}
&\Delta_{\mbf{m}}(\tilde{B}_{\mbf{m}}(z))=J_{\mbf{m}}^+(z)(\tilde{B}_{\mbf{m}}(zq^{\kappa_{(2)}})\otimes\tilde{B}_{\mbf{m}}(zq^{-\kappa_{(1)}}))J_{\mbf{m}}^-(z)\\
&\Delta_{\mbf{m}}(B_{\mbf{m}}(z))=J_{\mbf{m}}^+(z)(B_{\mbf{m}}(zq^{\kappa_{(2)}})\otimes B_{\mbf{m}}(zq^{-\kappa_{(1)}}))J_{\mbf{m}}^-(z).
\end{aligned}
\end{equation*}
\end{thm}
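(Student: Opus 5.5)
The plan is to follow the Etingof–Varchenko argument for dynamical Weyl group coproducts (Theorem 6 in \cite{OS22}, going back to \cite{EV02}). We work in the Hopf algebra $\mc{B}_{\mbf{m}}$ with coproduct $\Delta_{\mbf{m}}$ and antipode $S_{\mbf{m}}$. The two inputs are the ABRR equations of Proposition \ref{ABRR-equation} and the dynamical cocycle conditions of the previous proposition.

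\textbf{Step 1: write $\Delta_{\mbf{m}}$ of the contracted element.} Write $J_{\mbf{m}}^+(z)^{-1}=\sum_k a_k\otimes b_k$, so that $\tilde{B}_{\mbf{m}}(z)=\sum_k a_kS_{\mbf{m}}(b_k)$ with the shift $z\mapsto zq^{\kappa}$. Since $\Delta_{\mbf{m}}$ is an algebra map and $\Delta_{\mbf{m}}\circ S_{\mbf{m}}=(S_{\mbf{m}}\otimes S_{\mbf{m}})\Delta_{\mbf{m}}^{op}$, we get
\begin{align*}
\Delta_{\mbf{m}}(\tilde{B}_{\mbf{m}}(z))=\sum_k\Delta_{\mbf{m}}(a_k)\,(S_{\mbf{m}}\otimes S_{\mbf{m}})(\Delta^{op}_{\mbf{m}}(b_k)).
\end{align*}
This can be read as a contraction of the four-fold element $(J^+_{\mbf{m}})^{-1,12,34}$, that is, $(\Delta_{\mbf{m}}\otimes\Delta_{\mbf{m}})(J^+_{\mbf{m}})^{-1}$, with legs $3,4$ antipoded and multiplied in the order $1\cdot S(4)$ and $2\cdot S(3)$. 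We also need the dynamical shift: $q^{\kappa}$ evaluated on a product equals $q^{\kappa_{(1)}+\kappa_{(2)}}$, because $\kappa$ is a linear functional of the weight.

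\textbf{Step 2: expand with the cocycle identity.} Apply the cocycle condition twice to express $(\Delta\otimes\Delta)J^+_{\mbf{m}}$ as a product of $J^{+,12}$, $J^{+,34}$, $J^{+,1\,4}$-type and $J^{+,23}$-type factors with shifted arguments. The precise form is
\begin{align*}
J^{12,34}=J^{1,234}\ldots
\end{align*}
iterated as in the three-leg identity, after first rewriting $J^{12,3}J^{12}=J^{1,23}J^{23}$. Taking inverses and contracting, the outer factors $J^{12}$ (from the leg-pair $1,2$) survive untouched and give the left factor $J^+_{\mbf{m}}(z)$. The factors coupling leg $1$ with leg $4$ and leg $2$ with leg $3$ contract to $\tilde{B}_{\mbf{m}}(zq^{\kappa_{(2)}})\otimes\tilde{B}_{\mbf{m}}(zq^{-\kappa_{(1)}})$. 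The shifts $\pm\kappa$ come exactly from the shifts in the cocycle identity, combined with the $z\mapsto zq^{\kappa}$ substitution.

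\textbf{Step 3: identify the leftover factor.} The remaining cross factor, coupling legs $3,4$ after antipoding, contracts to $(S_{\mbf{m}}\otimes S_{\mbf{m}})((J^+_{\mbf{m}})^{-1}_{21})$-type expressions. This is where the relation $S_{\mbf{m}}\otimes S_{\mbf{m}}((J^-_{\mbf{m}}(z))_{21})=J^+_{\mbf{m}}(z)$ from Proposition \ref{ABRR-equation} turns it into $J^-_{\mbf{m}}(z)$ on the right. The formula for $B_{\mbf{m}}$ follows by the identical argument with $\mbf{m}_{21}$ and $S_{\mbf{m}}^{-1}$, or by applying the antipode and flip to the first formula.

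\textbf{Main obstacle.} The hard part is the bookkeeping in Step 2: tracking the dynamical shifts $q^{\pm\kappa_{(i)}}$ when a factor with shifted argument is moved past elements of nonzero weight, since $z$ acts weight-dependently. Done carelessly, this produces spurious $q^{\langle\alpha,\kappa\rangle}$ factors. I would handle it first with the weight-conjugation computation $q^{-\Omega_{13}}Z_\alpha q^{\Omega_{13}}=q^{\langle\kappa,\alpha\rangle}Z_\alpha$ from the cocycle proof, and verify the leg assignment on the weight-zero component before the general case. As a fallback, I would prove the identity by uniqueness: show that both sides, after conjugating by $J$'s, satisfy the same ABRR-type triangular equation with the same weight-zero part.
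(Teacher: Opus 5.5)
Your route is the one the paper relies on: its proof just defers to Theorem 6 of \cite{OS22}, which is the Etingof--Varchenko contraction of the dynamical cocycle. However, Steps 2--3 are not yet a proof, and Step 3 is wrong as stated.

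Two uses of the cocycle do suffice. Write $J=J^+_{\mbf{m}}$ and apply $\Delta_{\mbf{m}}$ to the first leg of $J(z)^{12,3}J(zq^{\kappa_{(3)}})^{12}=J(z)^{1,23}J(zq^{-\kappa_{(1)}})^{23}$. This gives $J(z)^{123,4}J(zq^{\kappa_{(4)}})^{12,3}=J(z)^{12,34}J(zq^{-\kappa_{(1)}-\kappa_{(2)}})^{34}$. Then rewrite $J(zq^{\kappa_{(4)}})^{12,3}$ using the cocycle at $z\mapsto zq^{\kappa_{(4)}}$. The result is
\begin{align*}
\bigl(J(z)^{12,34}\bigr)^{-1}={}&J(zq^{-\kappa_{(1)}-\kappa_{(2)}})^{34}\,J(zq^{\kappa_{(3)}+\kappa_{(4)}})^{12}\\
&\times\bigl(J(zq^{\kappa_{(4)}-\kappa_{(1)}})^{23}\bigr)^{-1}\bigl(J(zq^{\kappa_{(4)}})^{1,23}\bigr)^{-1}\bigl(J(z)^{123,4}\bigr)^{-1}.
\end{align*}
There is no two-leg $J^{14}$ factor. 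Under $a\otimes b\otimes c\otimes d\mapsto aS_{\mbf{m}}(d)\otimes bS_{\mbf{m}}(c)$ the factors contract as follows:
\begin{itemize}
\item $(J^{123,4})^{-1}$ gives $\tilde{B}_{\mbf{m}}$ in the first leg, since legs $2,3$ collapse by the antipode axiom;
\item $(J^{1,23})^{-1}$ collapses to $1$, because $(1\otimes\epsilon)J=1$;
\item $(J^{23})^{-1}$ gives $\tilde{B}_{\mbf{m}}$ in the second leg.
\end{itemize}
The antipode turns $\kappa_{(3)},\kappa_{(4)}$ into $-\kappa_{(2)},-\kappa_{(1)}$. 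Combined with the outer substitution $z\mapsto zq^{\kappa_{(1)}+\kappa_{(2)}}$, this gives exactly $J^+_{\mbf{m}}(z)$ on the left and the arguments $zq^{\kappa_{(2)}}$ and $zq^{-\kappa_{(1)}}$. So the shift bookkeeping is mechanical, and your uniqueness fallback is not needed.

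The actual error is in Step 3. The factor on legs $3,4$ is $J^{34}$ itself, not its inverse. Because it sits at the far left of the product, the contraction moves it to the far right as $(S_{\mbf{m}}\otimes S_{\mbf{m}})(J^+_{\mbf{m}}(z)_{21})$. With $(J^+_{\mbf{m}})^{-1}_{21}$, as you wrote, the relation of Proposition \ref{ABRR-equation} would instead give $(S_{\mbf{m}}^2\otimes S_{\mbf{m}}^2)(J^-_{\mbf{m}}(z)^{-1})$, which is the wrong factor.

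Even with the correct factor, that relation only gives
\[
(S_{\mbf{m}}\otimes S_{\mbf{m}})(J^+_{\mbf{m}}(z)_{21})=(S_{\mbf{m}}^2\otimes S_{\mbf{m}}^2)(J^-_{\mbf{m}}(z)).
\]
So before concluding that $J^-_{\mbf{m}}(z)$ appears on the right, you must also check that $S_{\mbf{m}}^2\otimes S_{\mbf{m}}^2$ fixes $J^-_{\mbf{m}}(z)$. This holds, for instance, if $S_{\mbf{m}}^2$ is conjugation by a group-like Cartan element, since $J^-_{\mbf{m}}$ has total degree zero. Once the first formula is established this way, the formula for $B_{\mbf{m}}$ follows by applying $S_{\mbf{m}}^{-1}$ and the flip, as you indicate.
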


\begin{proof}
The proof is given in Theorem 6 in \cite{OS22}, which is totally the same.
\end{proof}

One of the important results is that the coproduct for the dynamical monodromy operators are given by:
\begin{align}\label{coproduct-dynamical-monodromy}
\Delta_{\mbf{m}}(\mbf{B}_{\mbf{m}}(z))=\mbf{J}_{\mbf{m}}^+(z)(\mbf{B}_{\mbf{m}}(zq^{\kappa_{(2)}})\otimes\mbf{B}_{\mbf{m}}(zq^{-\kappa_{(1)}}))\mbf{J}_{\mbf{m}}^-(z)
\end{align}
and the proof is mimicking the proof given in Corollary 3 of \cite{OS22}

\subsection{Properties for the quantum difference operators}
Now let us consider the qKZ operator for the slope subalgebras, and we shall denote:
\begin{align*}
\mc{K}_{\mbf{m}}^-=T_{a}z_{(1)}^{-1}q^{\Omega}(R_{\mbf{m}}^-(a))^{-1},\qquad \mc{K}_{\mbf{m}}^+=q^{\Omega}(R_{\mbf{m}}^+(a))^{-1}T_az_{(1)}^{-1}.
\end{align*}

\begin{prop}\label{wall-qKZ-and-coproduct-monodromy}
\begin{align*}
\mc{K}_{\mbf{m}}^+\Delta_{\mbf{m}}(\mbf{B}_{\mbf{m}}(z))=\Delta_{\mbf{m}}(\mbf{B}_{\mbf{m}}(z))\mc{K}_{\mbf{m}}^-
\end{align*}
\end{prop}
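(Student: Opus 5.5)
The plan is to reduce the statement to the coproduct formula \eqref{coproduct-dynamical-monodromy} combined with the wall Knizhnik--Zamolodchikov equations of Proposition \ref{wall-KZ:proposition}. By \eqref{coproduct-dynamical-monodromy},
\begin{align*}
\Delta_{\mbf{m}}(\mbf{B}_{\mbf{m}}(z))=\mbf{J}_{\mbf{m}}^+(z)\bigl(\mbf{B}_{\mbf{m}}(zq^{\kappa_{(2)}})\otimes\mbf{B}_{\mbf{m}}(zq^{-\kappa_{(1)}})\bigr)\mbf{J}_{\mbf{m}}^-(z),
\end{align*}
where the fusion operators carry the equivariant parameter $a$ via $D_a\otimes\text{Id}$. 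We substitute this into both sides and move $\mc{K}_{\mbf{m}}^{+}$ through from the left, and $\mc{K}_{\mbf{m}}^-$ through from the right.

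First, rewrite the wall KZ equations \eqref{wall-KZ-equation} in intertwining form. The first one gives
\begin{align*}
q^{\Omega}(R_{\mbf{m}}^+(a))^{-1}T_a z_{(1)}^{-1}\,\mbf{J}_{\mbf{m}}^+(z,a)=\mbf{J}_{\mbf{m}}^+(z,a)\,T_a z_{(1)}^{-1}q^{\Omega}\cdot(\text{Cartan correction}).
\end{align*}
More precisely, $T_a\circ(R^+\mbf{J}^+)\,z_{(1)}^{-1}q^\Omega=z_{(1)}^{-1}q^\Omega\mbf{J}^+$, so $\mc{K}^+_{\mbf{m}}\mbf{J}^+_{\mbf{m}}=\mbf{J}^+_{\mbf{m}}\cdot z_{(1)}^{-1}q^{\Omega}T_a$ after conjugating $q^\Omega$ appropriately. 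Symmetrically, $\mbf{J}^-_{\mbf{m}}\mc{K}^-_{\mbf{m}}=T_a z_{(1)}^{-1}q^{\Omega}\,\mbf{J}^-_{\mbf{m}}$. The statement then reduces to showing that the middle factor $\mbf{B}_{\mbf{m}}(zq^{\kappa_{(2)}})\otimes\mbf{B}_{\mbf{m}}(zq^{-\kappa_{(1)}})$ commutes with the diagonal operator $T_a z_{(1)}^{-1}q^{\Omega}$.

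For this commutation there are three checks.
\begin{enumerate}
\item Each $\mbf{B}_{\mbf{m}}$ preserves weight spaces, since it is built from terms $E_{\mbf{k}}E_{-\mbf{k}}$ as in the proof of Proposition \ref{finite-product-result:proposition}. It therefore commutes with $z_{(1)}$.
\item The factors are independent of $a$, because $D_a$ acts trivially on weight-zero products of degree $\mbf{m}\cdot\mbf{k}$ and $-\mbf{m}\cdot\mbf{k}$. They therefore commute with $T_a$.
\item Conjugating by $q^{\Omega}$ produces shifts of the dynamical variable. By the computation in the cocycle proof, $q^{-\Omega_{12}}Z_\alpha q^{\Omega_{12}}=q^{\langle\kappa,\alpha\rangle}Z_\alpha$. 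Each factor is weight-preserving in its own tensor slot, so $q^\Omega$ commutes with it, but the arguments $zq^{\pm\kappa}$ must be matched carefully.
\end{enumerate}
The shift $z\mapsto zp^{\mbf{m}}$ built into $\mbf{J}$ must be compatible with the $p$-shift of $a$. This is exactly what Proposition \ref{wall-KZ:proposition} encodes, so it can be used directly.

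The main obstacle is the bookkeeping of these dynamical shifts. When $z_{(1)}^{-1}q^{\Omega}$ is moved past $\mbf{J}^\pm$, the resulting operator must be exactly the one in the wall KZ equation, with no residual $q^{\kappa}$ shift on the middle factor. I would first verify the identity on a tensor product of weight vectors, degree by degree in the upper/lower-triangular expansions. Uniqueness of triangular solutions, as in the proofs of Proposition \ref{ABRR-equation} and Proposition \ref{wall-KZ:proposition}, then reduces the claim to its degree-zero component, where it is trivial. If direct commutation fails, the fallback is to show that both sides solve the same triangular recursion and invoke that uniqueness.
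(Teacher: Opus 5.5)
Your proposal is correct and follows the paper's proof. Both substitute the coproduct formula \eqref{coproduct-dynamical-monodromy}, use the wall KZ equations to move $\mc{K}_{\mbf{m}}^{+}$ past $\mbf{J}_{\mbf{m}}^{+}(z)$ on the left and $\mc{K}_{\mbf{m}}^{-}$ past $\mbf{J}_{\mbf{m}}^{-}(z)$ on the right, and commute the weight-preserving middle factor with the diagonal operator $T_a z_{(1)}^{-1}q^{\Omega}$. Your three checks supply the justification for that middle commutation, which the paper leaves implicit, so the uniqueness-based fallback you sketch is not needed.
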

\begin{proof}
\begin{equation*}
\begin{aligned}
\mc{K}_{\mbf{m}}^+\Delta_{\mbf{m}}(\mbf{B}_{\mbf{m}}(z))=&q^{\Omega}(R_{\mbf{m}}^+(a))^{-1}T_az_{(1)}^{-1}\mbf{J}_{\mbf{m}}^+(z)(\mbf{B}_{\mbf{m}}(zq^{\kappa_{(2)}})\otimes\mbf{B}_{\mbf{m}}(zq^{-\kappa_{(1)}}))\mbf{J}_{\mbf{m}}^-(z)\\
=&\mbf{J}_{\mbf{m}}^+(z)T_az_{(1)}^{-1}q^{\Omega}(\mbf{B}_{\mbf{m}}(zq^{\kappa_{(2)}})\otimes\mbf{B}_{\mbf{m}}(zq^{-\kappa_{(1)}}))\mbf{J}_{\mbf{m}}^{-}(z)\\
=&\mbf{J}_{\mbf{m}}^+(z)(\mbf{B}_{\mbf{m}}(zq^{\kappa_{(2)}})\otimes\mbf{B}_{\mbf{m}}(zq^{-\kappa_{(1)}}))T_az_{(1)}^{-1}q^{\Omega}\mbf{J}_{\mbf{m}}^{-}(z)\\
=&\mbf{J}_{\mbf{m}}^+(z)(\mbf{B}_{\mbf{m}}(zq^{\kappa_{(2)}})\otimes\mbf{B}_{\mbf{m}}(zq^{-\kappa_{(1)}}))\mbf{J}_{\mbf{m}}^{-}(z)z_{(1)}^{-1}T_aq^{\Omega}(R_{\mbf{m}}^-(a))^{-1}\\
=&\Delta_{\mbf{m}}(\mbf{B}_{\mbf{m}}(z))\mc{K}_{\mbf{m}}^-
\end{aligned}
\end{equation*}

Here the second and the fourth equalities come from the wall KZ equation \eqref{wall-KZ-equation}. Hence the proof is finished.
\end{proof}

For $\mc{L}=(k_1,\cdots,k_I)\in\mbb{Z}^I$, we denote the difference operator $T_{\mc{L}}$ for the Kahler variables $(z_i)_{i\in I}$ as:
\begin{align*}
T_{\mc{L}}F(z_1,\cdots,z_{I})=F(z_1p^{k_1},\cdots,z_{I}p^{k_I})
\end{align*}

\begin{prop}\label{translation-formula}
Fix $\mc{L}\in\mbb{Z}^I$, we have the following translation symmetry formula:
\begin{align*}
T_{\mc{L}}^{-1}\mbf{B}_{\mbf{m}}(z)=\tau_{\mc{L}^{-1}}\mbf{B}_{\mbf{m}+\mc{L}}(z)T_{\mc{L}}^{-1}.
\end{align*}
\end{prop}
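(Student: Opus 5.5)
The plan is to reduce the statement to an identity between the fusion operators $J_{\mbf{m}}^+(z)$ and $J_{\mbf{m}+\mc{L}}^+(z)$. We read $T_{\mc{L}}^{-1}X(z)=X(zp^{-\mc{L}})T_{\mc{L}}^{-1}$ for any operator $X(z)$ with coefficients in the Kähler variables. Then the claim becomes
\begin{align*}
\mbf{B}_{\mbf{m}}(zp^{-\mc{L}})=\tau_{\mc{L}}^{-1}\bigl(\mbf{B}_{\mbf{m}+\mc{L}}(z)\bigr),
\end{align*}
with $\tau_{\mc{L}^{-1}}$ read as $\tau_{-\mc{L}}=\tau_{\mc{L}}^{-1}$. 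The shift automorphism restricts to an isomorphism $\tau_{\mc{L}}:\mc{B}_{\mbf{m}}\to\mc{B}_{\mbf{m}+\mc{L}}$, so both sides live in (a completion of) $\mc{B}_{\mbf{m}}(z)$. Since $\mbf{B}_{\mbf{m}}$ is built from $J^+_{\mbf{m}}$ by the multiplication map, the antipode and the substitution $z\mapsto zq^{\kappa}$, it suffices to prove:
\begin{itemize}
\item[(i)] $\tau_{\mc{L}}$ intertwines all of this structure, namely the coproducts $\Delta_{\mbf{m}}$ and $\Delta_{\mbf{m}+\mc{L}}$, the antipodes $S_{\mbf{m}}$ and $S_{\mbf{m}+\mc{L}}$, the Cartan elements $h_{i,0}^{\pm}$ (hence $q^{\Omega}$ and $\kappa$), and the universal $R$-matrices, $(\tau_{\mc{L}}\otimes\tau_{\mc{L}})R^+_{\mbf{m}}=R^+_{\mbf{m}+\mc{L}}$;
\item[(ii)] a corresponding shift identity for the fusion operators.
\end{itemize}

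For (i), I use the formula \eqref{slope-m-coproduct-slope-subalgebra}. Multiplying a shuffle element by $\prod z_{ia}^{k_i}$ produces, after the rescaling $z\mapsto\xi z$ in the second tensor factor, an extra factor $\xi^{\mc{L}\cdot(\mbf{n}-\mbf{k})}$. This factor is absorbed exactly by replacing $\xi^{\mbf{m}\cdot(\mbf{n}-\mbf{k})}$ with $\xi^{(\mbf{m}+\mc{L})\cdot(\mbf{n}-\mbf{k})}$. The pairing is preserved as well: $\langle\tau_{\mc{L}}E,\tau_{\mc{L}}F\rangle=\langle E,F\rangle$, since the factors $z^{\mc{L}}$ and $z^{-\mc{L}}$ cancel inside the constant-term integral. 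It follows that $\tau_{\mc{L}}$ maps dual bases to dual bases and so carries $R^{\pm}_{\mbf{m}}$ to $R^{\pm}_{\mbf{m}+\mc{L}}$. It fixes $h_{i,0}^{\pm}$ and hence $q^{\Omega}$.

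For (ii), I apply $\tau_{\mc{L}}\otimes\tau_{\mc{L}}$ to the ABRR equation \eqref{ABRR-eqn}. This shows that $(\tau_{\mc{L}}\otimes\tau_{\mc{L}})J^+_{\mbf{m}}(z)$ solves the ABRR equation for the slope $\mbf{m}+\mc{L}$, and by the uniqueness in Proposition \ref{ABRR-equation} it equals $J^+_{\mbf{m}+\mc{L}}(z)$. The shifted operators $\mbf{J}^{\pm}_{\mbf{m}}(z)$, however, involve $J^{\pm}_{\mbf{m}}(zp^{\mbf{m}})$, so we get
\begin{align*}
(\tau_{\mc{L}}\otimes\tau_{\mc{L}})\mbf{J}^+_{\mbf{m}}(zp^{-\mc{L}})=J^+_{\mbf{m}+\mc{L}}(zp^{\mbf{m}})=\mbf{J}^+_{\mbf{m}+\mc{L}}(z)\big|_{\text{shift}}
\end{align*}
up to the $p^{\mc{L}}$ shift. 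The point is that $\mbf{J}^+_{\mbf{m}+\mc{L}}(z)=J^+_{\mbf{m}+\mc{L}}(zp^{\mbf{m}+\mc{L}})$, and this is exactly the $z\mapsto zp^{-\mc{L}}$ discrepancy. Equivalently, one can check via the wall KZ equation \eqref{wall-KZ-equation} that $(\tau_{\mc{L}}\otimes\tau_{\mc{L}})\mbf{J}^+_{\mbf{m}}(zp^{-\mc{L}},a)$ is its upper-triangular solution for slope $\mbf{m}+\mc{L}$, and then invoke the uniqueness in Proposition \ref{wall-KZ:proposition}. Applying $\mathbf{m}\circ(1\otimes S)$, which commutes with $\tau_{\mc{L}}$ by (i), and then substituting $z\mapsto zq^{\kappa}$ gives the displayed identity. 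The substitution commutes with $\tau_{\mc{L}}$ because $\tau_{\mc{L}}$ preserves weights and $\kappa$ depends only on the weight.

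I expect the main obstacle to be the bookkeeping of the $p$-shift: checking that the power $p^{\mbf{m}}$ built into $\mbf{J}_{\mbf{m}}$ turns into exactly $p^{\mbf{m}+\mc{L}}$ combined with the $T_{\mc{L}}^{-1}$ conjugation, with the correct sign and direction of $\tau_{\pm\mc{L}}$. To settle this I would compare the recursive formula for $\mbf{J}_{\mbf{m},\mbf{n}}$ from the proof of Proposition \ref{wall-KZ:proposition}, whose denominators are $z^{-\mbf{n}}q^{l}p^{-\mbf{m}\cdot\mbf{n}}-1$, for slopes $\mbf{m}$ and $\mbf{m}+\mc{L}$. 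There $p^{-\mc{L}\cdot\mbf{n}}$ is absorbed exactly by $z\mapsto zp^{-\mc{L}}$ on a component of weight $\mbf{n}$, while the numerators match term by term under $\tau_{\mc{L}}$. A secondary point is checking that $\tau_{\mc{L}}$ commutes with $S_{\mbf{m}}$. This follows because it is a bialgebra isomorphism preserving the pairing, and the antipode is unique.
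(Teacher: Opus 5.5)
Your structural argument is the same as the paper's. The paper also reduces everything to $(\tau_{\mc{L}}\otimes\tau_{\mc{L}})R^{\pm}_{\mbf{m}}=R^{\pm}_{\mbf{m}+\mc{L}}$ and checks that $\tau_{\mc{L}}$ intertwines $\Delta_{\mbf{m}}$ and $\Delta_{\mbf{m}+\mc{L}}$ by the same absorption of $\xi^{\mc{L}\cdot(\mbf{n}-\mbf{k})}$. It then passes from dual bases to the $R$-matrix. Your observation that $\tau_{\mc{L}}$ preserves the pairing exactly, not just orthogonality (which is all the paper's auxiliary lemma records), is what that step actually needs. Your ABRR-uniqueness step is also correct. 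It gives $(\tau_{\mc{L}}\otimes\tau_{\mc{L}})J^{\pm}_{\mbf{m}}(z)=J^{\pm}_{\mbf{m}+\mc{L}}(z)$ with no shift in $z$. So all of the $p$-dependence comes from the convention defining $\mbf{J}_{\mbf{m}}$.

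The $p$-shift step, which the paper's proof never tracks, is where your argument breaks. With the paper's definition $\mbf{J}_{\mbf{m}}(z)=J_{\mbf{m}}(zp^{\mbf{m}})$ one gets
\begin{align*}
(\tau_{\mc{L}}\otimes\tau_{\mc{L}})\mbf{J}^{+}_{\mbf{m}}(zp^{-\mc{L}})=J^{+}_{\mbf{m}+\mc{L}}(zp^{\mbf{m}-\mc{L}})=\mbf{J}^{+}_{\mbf{m}+\mc{L}}(zp^{-2\mc{L}})\neq\mbf{J}^{+}_{\mbf{m}+\mc{L}}(z),
\end{align*}
so your intermediate value $J^{+}_{\mbf{m}+\mc{L}}(zp^{\mbf{m}})$ is wrong. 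Likewise, in the recursion with denominators $z^{-\mbf{n}}q^{l}p^{-\mbf{m}\cdot\mbf{n}}-1$, the extra factor $p^{-\mc{L}\cdot\mbf{n}}$ comes from $z\mapsto zp^{\mc{L}}$, not $zp^{-\mc{L}}$, since $(zp^{-\mc{L}})^{-\mbf{n}}=z^{-\mbf{n}}p^{\mc{L}\cdot\mbf{n}}$. Carried out correctly, your argument proves $\tau_{\mc{L}}\mbf{B}_{\mbf{m}}(zp^{\mc{L}})=\mbf{B}_{\mbf{m}+\mc{L}}(z)$, that is,
\begin{align*}
T_{\mc{L}}\mbf{B}_{\mbf{m}}(z)=\tau_{\mc{L}}^{-1}\mbf{B}_{\mbf{m}+\mc{L}}(z)T_{\mc{L}}.
\end{align*}
This is the reverse direction, not the stated formula. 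The stated direction holds exactly under the opposite convention $\mbf{J}_{\mbf{m}}(z)=J_{\mbf{m}}(zp^{-\mbf{m}})$, which is the one used in \eqref{sl2-monodromy-operator}, where $z^{-1}$ becomes $z^{-1}p^{m}$. With that convention, $(\tau_{\mc{L}}\otimes\tau_{\mc{L}})\mbf{J}_{\mbf{m}}(zp^{-\mc{L}})=J_{\mbf{m}+\mc{L}}(zp^{-\mbf{m}-\mc{L}})=\mbf{J}_{\mbf{m}+\mc{L}}(z)$ on the nose, and the rest of your proof goes through. The paper's definition of $\mbf{J}_{\mbf{m}}$ and its explicit formulas use opposite conventions here. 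You therefore need to fix the convention explicitly, rather than asserting that the shifts match.
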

\begin{proof}
Since the monodromy operator $\mbf{B}_{\mbf{m}}(z)$ is defined by the fusion operator in \ref{defn-of-monodromy}, and the fusion operator $\mbf{J}_{\mbf{m}}^{\pm}(z)$ is totally determined by the ABRR equation \ref{ABRR-eqn}.
We can see that the proposition follows from the following identity:
\begin{align*}
\tau_{\mc{L}\otimes\mc{L}}R_{\mbf{m}}^{\pm}=R_{\mbf{m}+\mc{L}}^{\pm}.
\end{align*}

We know that the adjoint action by $\mc{L}$ induces the isomorphism of slope subalgebras:
\begin{align*}
\tau_{\mc{L}}:\mc{B}_{\mbf{m}}\cong\mc{B}_{\mbf{m}+\mc{L}}
\end{align*}

We further need to prove the following equality:
\begin{align*}
\tau_{\mc{L}\otimes\mc{L}}(\Delta_{\mbf{m}}(F))=\Delta_{\mbf{m}+\mc{L}}(\tau_{\mc{L}}(F)),\qquad \tau_{\mc{L}\otimes\mc{L}}(\Delta_{\mbf{m}}(G))=\Delta_{\mbf{m}+\mc{L}}(\tau_{\mc{L}}(G))
\end{align*}
for $F\in\mbf{U}_{Q}^+$ and $G\in\mbf{U}_{Q}^-$. For simplicity we only check the case $F\in\mbf{U}_{Q}^+$ since the proof for the negative half is totally the same.
This can be checked by the following direct computation:
\begin{equation*}
\begin{aligned}
&\tau_{\mc{L}\otimes\mc{L}}(\Delta_{\mbf{m}}(F))=\tau_{\mc{L}\otimes\mc{L}}(\sum_{\mbf{0}\leq\mbf{k}\leq\mbf{n}}\lim_{\xi\rightarrow\infty}\frac{h_{\mbf{n}-\mbf{k}}F(\cdots,z_{i1},\cdots,z_{ik_i}\otimes\xi z_{i,k_{i}+1},\cdots,\xi z_{in_i},\cdots)}{\xi^{\mbf{m}\cdot(\mbf{n}-\mbf{k})}\text{lead}[\prod_{1\leq a\leq k_i}^{i\in I}\prod_{k_j<b\leq n_j}^{j\in I}\zeta_{ji}(\xi z_{jb}/z_{ia})]})\\
=&(\sum_{\mbf{0}\leq\mbf{k}\leq\mbf{n}}\lim_{\xi\rightarrow\infty}\frac{h_{\mbf{n}-\mbf{k}}F(\cdots,z_{i1},\cdots,z_{ik_i}\otimes\xi z_{i,k_{i}+1},\cdots,\xi z_{in_i},\cdots)}{\xi^{\mbf{m}\cdot(\mbf{n}-\mbf{k})}\text{lead}[\prod_{1\leq a\leq k_i}^{i\in I}\prod_{k_j<b\leq n_j}^{j\in I}\zeta_{ji}(\xi z_{jb}/z_{ia})]}\prod_{1\leq l_i\leq k_i}z_{il_i}\otimes\prod_{k_i+1\leq l_i\leq n_i}z_{il_i})\\
=&(\sum_{\mbf{0}\leq\mbf{k}\leq\mbf{n}}\lim_{\xi\rightarrow\infty}\frac{h_{\mbf{n}-\mbf{k}}F(\cdots,z_{i1},\cdots,z_{ik_i}\otimes\xi z_{i,k_{i}+1},\cdots,\xi z_{in_i},\cdots)}{\xi^{(\mbf{m}+\mc{L})\cdot(\mbf{n}-\mbf{k})}\text{lead}[\prod_{1\leq a\leq k_i}^{i\in I}\prod_{k_j<b\leq n_j}^{j\in I}\zeta_{ji}(\xi z_{jb}/z_{ia})]}\prod_{1\leq l_i\leq k_i}z_{il_i}\otimes\prod_{k_i+1\leq l_i\leq n_i}\xi z_{il_i})\\
=&\Delta_{\mbf{m}+\mc{L}}(\tau_{\mc{L}}(F))
\end{aligned}
\end{equation*}

Using the similar argument, one can show that:
\begin{align*}
\tau_{\mc{L}\otimes\mc{L}}(\Delta_{\mbf{m}}^{op}(F))=\Delta_{\mbf{m}+\mc{L}}^{op}(\tau_{\mc{L}}(F))
\end{align*}
Using the formula that:
\begin{align*}
\Delta_{\mbf{m}}^{op}=q^{\Omega}R_{\mbf{m}}^+\Delta_{\mbf{m}}(q^{\Omega}R_{\mbf{m}}^{+})^{-1}
\end{align*}
we have
\begin{equation*}
\begin{aligned}
\tau_{\mc{L}\otimes\mc{L}}(\Delta_{\mbf{m}}^{op}(F))=&\tau_{\mc{L}\otimes\mc{L}}(q^{\Omega}R_{\mbf{m}}^+)\tau_{\mc{L}\otimes\mc{L}}(\Delta_{\mbf{m}}(F))\tau_{\mc{L}\otimes\mc{L}}(q^{\Omega}R_{\mbf{m}}^+)^{-1}\\
=&q^{\Omega}R_{\mbf{m}+\mc{L}}^+\Delta_{\mbf{m}+\mc{L}}(\tau_{\mc{L}}(F))(q^{\Omega}R_{\mbf{m}+\mc{L}}^+)^{-1}\\
=&q^{\Omega}R_{\mbf{m}+\mc{L}}^+\tau_{\mc{L}\otimes\mc{L}}(\Delta_{\mbf{m}}(F))(q^{\Omega}R_{\mbf{m}+\mc{L}}^+)^{-1}.
\end{aligned}
\end{equation*}

Thus it follows that both $q^{\Omega}R_{\mbf{m}+\mc{L}}^+$ and $\tau_{\mc{L}\otimes\mc{L}}(q^{\Omega}R_{\mbf{m}}^+)$ are the universal $R$-matrices for the slope subalgebra $\mc{B}_{\mbf{m}+\mc{L}}$. To prove that they are the same, we need to prove that $\tau_{\mc{L}\otimes\mc{L}}(q^{\Omega}R_{\mbf{m}}^+)$ also comes from the Drinfeld pairing on $\mc{B}_{\mbf{m}+\mc{L}}$. It is easily checked that $\tau_{\mc{L}\otimes\mc{L}}(q^{\Omega}R_{\mbf{m}}^+)$ satisfies the Yang-Baxter equation. Since the pairing is non-degenerate, we denote the basis for the universal $R$-matrices of $\mc{B}_{\mbf{m}}$ and $\mc{B}_{\mbf{m}+\mc{L}}$ as:
\begin{align*}
R_{\mbf{m}}^+=\sum_{i}F_{i}^{(\mbf{m})}\otimes G_{i}^{(\mbf{m})},\qquad R_{\mbf{m}+\mc{L}}^+=\sum_{i}F_{i}^{(\mbf{m}+\mc{L})}\otimes G_{i}^{(\mbf{m}+\mc{L})}.
\end{align*}

Note that the expression of both $R_{\mbf{m}}^+$ and $R_{\mbf{m}+\mc{L}}^+$ is independent of the choice of the basis. Moreover, $\tau_{\mc{L}}$ is an isomorphism for each graded pieces $\mc{B}_{\mbf{m},\mbf{k}}\cong\mc{B}_{\mbf{m}+\mc{L},\mbf{k}}$. Thus we only need to prove the following thing:
\begin{lem}
Given $F\in\mc{B}_{\mbf{m}}^+$ and $G\in\mc{B}_{\mbf{m}}^{-}$ such that $\langle F,G\rangle=0$. We have that:
\begin{align*}
\langle\tau_{\mc{L}}F,\tau_{\mc{L}}G\rangle=0
\end{align*}
\end{lem}
\begin{proof}
This is from the definition of the shift automorphism $\tau_{\mc{L}}$ in \eqref{shift-automorphism-quantum-positive} and \eqref{shift-automorphism-quantum-negative}, and the definition of the bialgebra pairing \eqref{nondegenerate-bialgebra-pairing}.
\end{proof}

Since each graded pieces $\mc{B}_{\mbf{m},\mbf{k}}$ are finite-dimensional, we conclude that $\tau_{\mc{L}}(F_{i}^{(\mbf{m})})$, $\tau_{\mc{L}}(G_{i}^{(\mbf{m})})$ is a set of orthogonal basis for $\mc{B}_{\mbf{m}}$. By the independence of the orthogonal basis for the universal $R$-matrix with respect to the nondegenerate bialgebra pairing, we conclude that
\begin{align*}
\tau_{\mc{L}\otimes\mc{L}}(R_{\mbf{m}}^+)=R_{\mbf{m}+\mc{L}}^+.
\end{align*}
Thus we can conclude the proposition.
\end{proof}

Now we describe the coproduct for the quantum lattice operator \eqref{defn-of-quantum-lattice-operator}.
\begin{prop}\label{coproduct-quantum-difference-operator}
Given two representation $V,W\in\mc{O}$ we have that:
\begin{align*}
\Delta_{\mbf{m}}(T_{\mc{L}}^{-1}\mbf{B}^s_{\mc{L}}(z))=W_{\mbf{m}_0}(z)W_{\mbf{m}_1}(z)\cdots W_{\mbf{m}_{n-1}}(z)\prod_{\mbf{m}\in[s,s+\mc{L})}(R_{\mbf{m}}^-)^{-1}T_{\mc{L}}^{-1}
\end{align*}
as the endomorphism of $V\otimes W$,
and here $\mbf{m}_0,\cdots,\mbf{m}_{n-1}$ are the slope points on the interval $s+\mc{L}$ and $s$. $W_{\mbf{m}}(z):=\Delta_{\mbf{m}}(\mbf{B}_{\mbf{m}}(z))R_{\mbf{m}}^-$.
\end{prop}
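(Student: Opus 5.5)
The plan is to expand the quantum lattice operator as the ordered product of finitely many dynamical monodromy operators, push $T_{\mc{L}}^{-1}$ through each factor with the translation formula, and then apply the coproduct one wall at a time.

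First, by Proposition \ref{finite-product-result:proposition}, on each weight subspace of $V\otimes W$ the product in \eqref{defn-of-quantum-lattice-operator} reduces to a finite ordered product over the walls $\mbf{m}_0,\ldots,\mbf{m}_{n-1}$ met by a catty-corner path. So I may write
$$T_{\mc{L}}^{-1}\mbf{B}^s_{\mc{L}}(z)=T_{\mc{L}}^{-1}\tau_{\mc{L}}\big(\mbf{B}_{\mbf{m}_0'}(z)\cdots\mbf{B}_{\mbf{m}_{n-1}'}(z)\big)$$
with $\mbf{m}_k'\in[s,s-\mc{L})$. I will apply Proposition \ref{translation-formula} factor by factor. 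This rewrites $T_{\mc{L}}^{-1}\tau_{\mc{L}}\mbf{B}_{\mbf{m}'}(z)$ as a product of monodromy operators at the translated slopes $\mbf{m}=\mbf{m}'+\mc{L}\in[s+\mc{L},s)$, followed by $T_{\mc{L}}^{-1}$. This is why the statement involves the walls between $s+\mc{L}$ and $s$. The result is an expression
$$\mbf{B}_{\mbf{m}_0}(z)\cdots\mbf{B}_{\mbf{m}_{n-1}}(z)\,T_{\mc{L}}^{-1},$$
in which each factor lies in a single slope subalgebra $\mc{B}_{\mbf{m}_k}$.

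Second, I will compute the coproduct of this product. A single coproduct $\Delta_{\mbf{m}}$ does not act naturally on every factor. I will therefore use \eqref{Relation-between-coproducts-of-different-slope}, in the form $\Delta_{(\mbf{m}')}=(R^{-,[\mbf{m},\mbf{m}')})^{-1}\Delta_{(\mbf{m})}R^{-,[\mbf{m},\mbf{m}')}$, together with the fact that $\Delta_{(\mbf{m}_k)}$ restricts to $\Delta_{\mbf{m}_k}$ on $\mc{B}_{\mbf{m}_k}$. This expresses the coproduct of the $k$-th factor as a conjugate of $\Delta_{\mbf{m}_k}(\mbf{B}_{\mbf{m}_k}(z))$ by the product of wall $R$-matrices $R^-_{\mbf{m}}$ for $\mbf{m}$ strictly between the base slope and $\mbf{m}_k$. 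Since the wall $R$-matrices are ordered and factorize as in \eqref{factorization-of-R-matrices}, the conjugating factors between consecutive terms telescope. Each $\Delta_{\mbf{m}_k}(\mbf{B}_{\mbf{m}_k}(z))$ ends up followed by exactly one $R^-_{\mbf{m}_k}$, which produces $W_{\mbf{m}_k}(z)=\Delta_{\mbf{m}_k}(\mbf{B}_{\mbf{m}_k}(z))R^-_{\mbf{m}_k}$. The leftover tail is $\prod_{\mbf{m}\in[s,s+\mc{L})}(R^-_{\mbf{m}})^{-1}$, which stands to the right of the $W$'s and before $T_{\mc{L}}^{-1}$.

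Third, I will handle the position of $T_{\mc{L}}^{-1}$. $T_{\mc{L}}^{-1}$ acts only on the Kähler variables and is group-like, since $\Delta(T_{\mc{L}})=T_{\mc{L}}\otimes T_{\mc{L}}$ on $V\otimes W$. The $R$-matrices $R^-_{\mbf{m}}$ do not depend on $z$, so $T_{\mc{L}}^{-1}$ commutes past them. The base slope $\mbf{m}$ in $\Delta_{\mbf{m}}$ on the left-hand side should be identified with the endpoint $s$, or equivalently $s+\mc{L}$ after using \eqref{Relation-between-coproducts-of-different-slope} once more. This choice fixes which conjugation tail appears.

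The main obstacle is the bookkeeping in the second step: making sure the telescoping produces exactly the ordered product $W_{\mbf{m}_0}\cdots W_{\mbf{m}_{n-1}}$ followed by the inverse wall $R$-matrices over the full interval, with the correct orientation of the interval and no leftover conjugations. This includes checking that the $\tau_{\mc{L}}$-translation is compatible with \eqref{Relation-between-coproducts-of-different-slope}, via the identity $\tau_{\mc{L}\otimes\mc{L}}R^{\pm}_{\mbf{m}}=R^{\pm}_{\mbf{m}+\mc{L}}$ established in the proof of Proposition \ref{translation-formula}. I would first verify the case of a single wall ($n=1$) and then proceed by induction on $n$, adding one wall at a time at the left end.
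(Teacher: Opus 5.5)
Your proposal is correct and follows essentially the same route as the paper. You reduce to a finite product via Proposition \ref{finite-product-result:proposition}, and use Proposition \ref{translation-formula} to rewrite $T_{\mc{L}}^{-1}\mbf{B}^s_{\mc{L}}(z)$ as $\mbf{B}_{\mbf{m}_0}(z)\cdots\mbf{B}_{\mbf{m}_{n-1}}(z)T_{\mc{L}}^{-1}$ with walls in $[s+\mc{L},s)$. You then express each $\Delta_s(\mbf{B}_{\mbf{m}_k}(z))$ through \eqref{Relation-between-coproducts-of-different-slope} as a conjugate of $\Delta_{\mbf{m}_k}(\mbf{B}_{\mbf{m}_k}(z))$ by $R^-_{\mbf{m}_0}\cdots R^-_{\mbf{m}_{k-1}}$, and telescope to $W_{\mbf{m}_0}(z)\cdots W_{\mbf{m}_{n-1}}(z)$ followed by the inverse wall $R$-matrices. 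Your reading of the base slope in $\Delta_{\mbf{m}}$ as $s$ is also what the paper's proof actually uses.
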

\begin{proof}
By Proposition \ref{finite-product-result:proposition}, the quantum lattice operator $\mbf{B}_{\mc{L}}^s(z)$ can be written as the finite product.
By definition we have that:
\begin{align*}
T_{\mc{L}}^{-1}\mbf{B}^s_{\mc{L}}(z)=T_{\mc{L}}^{-1}\tau_{\mc{L}}(\mbf{B}_{\mbf{m}_{-n}}(z)\cdots\mbf{B}_{\mbf{m}_{-1}}(z)).
\end{align*}

By the Proposition \ref{translation-formula} we have that:
\begin{align*}
\mbf{B}^s_{\mc{L}}(z)=\mbf{B}_{\mbf{m}_0}(z)\mbf{B}_{\mbf{m}_1}(z)\cdots\mbf{B}_{\mbf{m}_{n-1}}(z)T_{\mc{L}}^{-1},\qquad\mbf{m}_{k+n}=\mbf{m}_k+\mc{L}.
\end{align*}

Then for the coproduct we have
\begin{align*}
\Delta_s(\mbf{B}^s_{\mc{L}}(z))=\Delta_s(\mbf{B}_{\mbf{m}_0}(z)\mbf{B}_{\mbf{m}_1}(z)\cdots\mbf{B}_{\mbf{m}_{n-1}}(z))T_{\mc{L}}^{-1}.
\end{align*}

By the formula given in \eqref{Relation-between-coproducts-of-different-slope}:
\begin{align*}
\Delta_s(\mbf{B}_{\mbf{m}_k}(z))=(R_{\mbf{m}_0}^-)\cdots(R_{\mbf{m}_{k-1}}^-)\Delta_{\mbf{m}_k}(\mbf{B}_{\mbf{m}_k}(z))(R_{\mbf{m}_{k-1}}^-)^{-1}\cdots (R_{\mbf{m}_{0}}^-)^{-1}.
\end{align*}

Combining the result together we obtain the proposition.

\end{proof}

Now we define the qKZ operator of slope $s$ as:
\begin{align}\label{defn-of-qKZ-operator-1-point}
\mc{K}^s=z_{(1)}T_a^{-1}\mc{R}^s(a)
\end{align}
\begin{thm}\label{commuting-relation-for-qKZ-and-dynamical-monodromy:theorem}
Fix two representations $V,W\in\mc{O}$, and let $s$ and $s'$ be two slopes separated by a single slope $\mbf{m}$, we have that:
\begin{align*}
W_{\mbf{m}}(z)^{-1}\mc{K}^s W_{\mbf{m}}(z)=\mc{K}^{s'},\qquad W_{\mbf{m}}(z)^{-1}\Delta_{s}(\mbf{B}^s_{\mc{L}}(z))W_{\mbf{m}}(z)=\Delta_{s'}(\mbf{B}^{s'}_{\mc{L}}(z))
\end{align*}
as the endomorphism of $\bigoplus_{\mbf{n}_1+\mbf{n}_2=\mbf{n}}V_{\mbf{n}_1}\otimes W_{\mbf{n}_2}$ with the fixed $\mbf{n}\in\mbb{N}^I$.
\end{thm}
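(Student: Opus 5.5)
We prove the two identities separately. The first reduces to Proposition \ref{wall-qKZ-and-coproduct-monodromy} via the factorization of $\mc{R}^s$. The second reduces to Proposition \ref{coproduct-quantum-difference-operator} together with the translation formula of Proposition \ref{translation-formula}.

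Throughout, assume $s'$ lies just past $\mbf{m}$, so that $[s,s')$ meets the wall set $M_{\mbf{n}}$ only at $\mbf{m}$. This is legitimate because we work on the finite-dimensional block $\bigoplus_{\mbf{n}_1+\mbf{n}_2=\mbf{n}}V_{\mbf{n}_1}\otimes W_{\mbf{n}_2}$, where by Proposition \ref{finite-product-result:proposition} only finitely many slopes act nontrivially. By \eqref{universal-R-matrix-of-different-slopes}, on this block
\begin{align*}
\mc{R}^{s'}=(R^+_{\mbf{m}})^{-1}\mc{R}^{s}R^-_{\mbf{m}},
\end{align*}
and the same holds with the spectral parameter $a$ inserted by applying $D_a\otimes\text{Id}$. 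Keeping track of the Cartan factor $q^{\Omega}$ (it lies inside $R^{\infty}$ in the middle of $\mc{R}^s$), we rewrite the relation as
\begin{align*}
\mc{K}^{s'}=(\mc{K}^+_{\mbf{m}})^{-1}\,\mc{K}^s\,\mc{K}^-_{\mbf{m}},
\end{align*}
up to the ordering of $T_a$ and $z_{(1)}$. These two commute up to the substitution $a\mapsto pa$ in $R_{\mbf{m}}^{\pm}(a)$, and that substitution is exactly what the wall operators $\mc{K}^{\pm}_{\mbf{m}}$ encode.

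Next we use Proposition \ref{wall-qKZ-and-coproduct-monodromy}, $\mc{K}^+_{\mbf{m}}\Delta_{\mbf{m}}(\mbf{B}_{\mbf{m}})=\Delta_{\mbf{m}}(\mbf{B}_{\mbf{m}})\mc{K}^-_{\mbf{m}}$, together with the definition $W_{\mbf{m}}=\Delta_{\mbf{m}}(\mbf{B}_{\mbf{m}})R^-_{\mbf{m}}$. These give the conjugation $W_{\mbf{m}}^{-1}\mc{K}^sW_{\mbf{m}}=\mc{K}^{s'}$. Concretely, $\mc{K}^s$ is written as a product of the slope-$\geq s'$ part, the $\mbf{m}$-wall piece, and the slope-$<\mbf{m}$ part. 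One then checks that conjugating by $\Delta_{\mbf{m}}(\mbf{B}_{\mbf{m}})$ exchanges $\mc{K}^-_{\mbf{m}}$ with $\mc{K}^+_{\mbf{m}}$. The factors of $\mc{R}^s$ at slopes $\neq\mbf{m}$ commute with $W_{\mbf{m}}$ up to the conjugation already built into $\Delta_{(\cdot)}$ by \eqref{Relation-between-coproducts-of-different-slope}.

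For the second identity we expand $\Delta_s(\mbf{B}^s_{\mc{L}}(z))$ by Proposition \ref{coproduct-quantum-difference-operator} as
\begin{align*}
W_{\mbf{m}_0}\cdots W_{\mbf{m}_{n-1}}\prod(R^-)^{-1}T_{\mc{L}}^{-1},
\end{align*}
with $\mbf{m}_0=\mbf{m}$ the first wall after $s$. The analogous expansion for $s'$ starts at $\mbf{m}_1$ and ends at $\mbf{m}_n=\mbf{m}+\mc{L}$. Conjugating by $W_{\mbf{m}}$ removes $W_{\mbf{m}_0}$ from the left and appends $W_{\mbf{m}}$ on the right, after the factor $T_{\mc{L}}^{-1}$. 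Proposition \ref{translation-formula}, applied in its coproduct form using $\tau_{\mc{L}\otimes\mc{L}}R^{\pm}_{\mbf{m}}=R^{\pm}_{\mbf{m}+\mc{L}}$ and $\tau_{\mc{L}\otimes\mc{L}}\Delta_{\mbf{m}}=\Delta_{\mbf{m}+\mc{L}}\tau_{\mc{L}}$, moves $T_{\mc{L}}^{-1}W_{\mbf{m}}$ to $W_{\mbf{m}+\mc{L}}T_{\mc{L}}^{-1}$. The product of the $(R^-)^{-1}$ factors must also be shifted by one slope, which is handled by the same commutation.

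The main obstacle is bookkeeping rather than ideas. One must verify that the $R^-$ factors in $W_{\mbf{m}}$ and in $\prod(R^-_{\mbf{m}'})^{-1}$ reorder correctly, and that the shift $z\mapsto zq^{\kappa}$ in $\mbf{B}_{\mbf{m}}$ is compatible with $T_{\mc{L}}$ and with the $\mc{L}_{\mbf{k}}$ intertwiner from Lemma \ref{line-bundle-operator:lemma} hidden in $\tau_{\mc{L}}$. I would first check the rank-one case, with a single wall and $\mc{L}=\mbf{e}_i$, to fix the conventions, and then argue by induction on the number of walls crossed.
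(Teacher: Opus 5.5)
Your argument for the second identity is the same as the paper's. You expand both sides with Proposition \ref{coproduct-quantum-difference-operator}, strip $W_{\mbf{m}}$ off the left, and use Proposition \ref{translation-formula} to carry it past $T_{\mc{L}}^{-1}$.

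The first identity, however, has a genuine gap. Proposition \ref{wall-qKZ-and-coproduct-monodromy} only controls how $\Delta_{\mbf{m}}(\mbf{B}_{\mbf{m}}(z))$ interacts with $q^{\Omega}$, $z_{(1)}$, $T_a$ and $R^{\pm}_{\mbf{m}}(a)$. It says nothing about the factors of $\mc{R}^s(a)=R^{\geq s}R^{\infty}R^{-,<s}$ at slopes other than $\mbf{m}$. There is no reason for those factors (e.g.\ $R^{+}_{\mbf{m}'}$ with $\mbf{m}'>\mbf{m}$, or $R^{-,<\mbf{m}}$) to commute with $W_{\mbf{m}}$ individually. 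Formula \eqref{Relation-between-coproducts-of-different-slope} cannot supply this either: it compares the coproducts of one element at two slopes, and does not move $\Delta_{\mbf{m}}(\mbf{B}_{\mbf{m}})$ past other wall $R$-matrices. Rewriting $\mc{K}^{s'}$ in terms of $\mc{K}^s$ and $\mc{K}^{\pm}_{\mbf{m}}$ does not help, because $W_{\mbf{m}}^{-1}\mc{K}^sW_{\mbf{m}}$ still requires passing $\Delta_{\mbf{m}}(\mbf{B}_{\mbf{m}})$ through all of $\mc{K}^s$. Your displayed $\mc{K}^{s'}=(\mc{K}^+_{\mbf{m}})^{-1}\mc{K}^s\mc{K}^-_{\mbf{m}}$ is also wrong: it puts $R^+_{\mbf{m}}$ and $(R^-_{\mbf{m}})^{-1}$ around $\mc{R}^s$, the inverse of \eqref{universal-R-matrix-of-different-slopes}. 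What actually holds is $\mc{K}^{s'}=C^{-1}\mc{K}^+_{\mbf{m}}\mc{K}^s(\mc{K}^-_{\mbf{m}})^{-1}C$ with $C=q^{\Omega}T_az_{(1)}^{-1}$.

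The missing ingredient is quasitriangularity, used twice.

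First, on the block there is no wall in $[s,\mbf{m})$, so $\Delta_{(s)}=\Delta_{(\mbf{m})}$, which restricts to $\Delta_{\mbf{m}}$ on $\mc{B}_{\mbf{m}}$. Since $\mc{R}^s$ is the universal $R$-matrix for $\Delta_{(s)}$, this gives $\mc{R}^s(a)\Delta_{\mbf{m}}(\mbf{B}_{\mbf{m}}(z))=\Delta^{op}_{\mbf{m}}(\mbf{B}_{\mbf{m}}(z))\mc{R}^s(a)$.

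Second, the wall $R$-matrix of $\mc{B}_{\mbf{m}}$ (built from $R^+_{\mbf{m}}$ and $q^{\Omega}$) turns $\Delta^{op}_{\mbf{m}}$ back into $\Delta_{\mbf{m}}$. This gives $\mc{K}^sW_{\mbf{m}}(z)=z_{(1)}T_a^{-1}R^+_{\mbf{m}}q^{-\Omega}\,\Delta_{\mbf{m}}(\mbf{B}_{\mbf{m}}(z))\,q^{\Omega}(R^+_{\mbf{m}})^{-1}\mc{R}^s(a)R^-_{\mbf{m}}$.

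Only now is $(\mc{K}^+_{\mbf{m}})^{-1}=z_{(1)}T_a^{-1}R^+_{\mbf{m}}q^{-\Omega}$ adjacent to $\Delta_{\mbf{m}}(\mbf{B}_{\mbf{m}})$. Proposition \ref{wall-qKZ-and-coproduct-monodromy}, in the form $(\mc{K}^+_{\mbf{m}})^{-1}\Delta_{\mbf{m}}(\mbf{B}_{\mbf{m}})=\Delta_{\mbf{m}}(\mbf{B}_{\mbf{m}})(\mc{K}^-_{\mbf{m}})^{-1}$, moves it across. The $q^{\pm\Omega}$ then cancel, leaving $W_{\mbf{m}}(z)\,z_{(1)}T_a^{-1}(R^+_{\mbf{m}})^{-1}\mc{R}^s(a)R^-_{\mbf{m}}=W_{\mbf{m}}(z)\mc{K}^{s'}$.

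So the correct version of your ``other factors commute'' sentence is this: the combination $q^{\Omega}(R^+_{\mbf{m}})^{-1}\mc{R}^s(a)$ commutes with $\Delta_{\mbf{m}}(\mbf{B}_{\mbf{m}}(z))$ on the block. That follows from the two intertwining relations above, not from \eqref{Relation-between-coproducts-of-different-slope}.
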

\begin{proof}
By computation:
\begin{equation*}
\begin{aligned}
\mc{K}^sW_{\mbf{m}}(z)=&z_{(1)}T_{a}^{-1}\mc{R}^s(a)\Delta_{\mbf{m}}(\mbf{B}_{\mbf{m}}(z))R_{\mbf{m}}^-\\
=&z_{(1)}T_a^{-1}\Delta_{\mbf{m}}^{op}(\mbf{B}_{\mbf{m}}(z))\mc{R}^s(a)R_{\mbf{m}}^-\\
=&z_{(1)}T_{a}^{-1}(R_{\mbf{m}}^+)q^{-\Omega}q^{\Omega}(R_{\mbf{m}}^+)^{-1}\Delta_{\mbf{m}}^{op}(\mbf{B}_{\mbf{m}}(z))\mc{R}^s(a)R_{\mbf{m}}^-\\
=&z_{(1)}T_{a}^{-1}R_{\mbf{m}}^+q^{-\Omega}\Delta_{\mbf{m}}(\mbf{B}_{\mbf{m}}(z))q^{\Omega}(R_{\mbf{m}}^{+})^{-1}\mc{R}^s(a)R_{\mbf{m}}^-\\
=&\Delta_{\mbf{m}}(\mbf{B}_{\mbf{m}}(z))(R_{\mbf{m}}^-)z_{(1)}^{-1}T_a^{-1}(R_{\mbf{m}}^+)^{-1}\mc{R}^s(a)R_{\mbf{m}}^-=W_{\mbf{m}}(z)\mc{K}^{s'}.
\end{aligned}
\end{equation*}

The second last equality comes from Proposition \ref{wall-qKZ-and-coproduct-monodromy}, and the last equality above comes from the fact on when restricted to $\bigoplus_{\mbf{n}_1+\mbf{n}_2=\mbf{n}}\text{End}(V_{\mbf{n}_1}\otimes W_{\mbf{n}_2})$, we have $(R_{\mbf{m}}^+)^{-1}\mc{R}^s(a)R_{\mbf{m}}^-=\mc{R}^{s'}(a)$.

For the second equality, by the Proposition \ref{coproduct-quantum-difference-operator} we have:
\begin{align*}
&\Delta_s(\mbf{B}_{\mc{L}}^s(z))=W_{\mbf{m}_0}(z)\cdots W_{\mbf{m}_{n-1}}(z)\prod_{\mbf{m}\in[s,s+\mc{L})}^{\rightarrow}(R_{\mbf{m}}^-)^{-1}T_{\mc{L}}^{-1}\\
&\Delta_{s'}(\mbf{B}_{\mc{L}}^{s'}(z))=W_{\mbf{m}_1}(z)\cdots W_{\mbf{m}_n}(z)\prod_{\mbf{m}\in[s',s'+\mc{L})}^{\rightarrow}(R_{\mbf{m}}^-)^{-1}T_{\mc{L}}^{-1}.
\end{align*}

Then the equality follows from the translation symmetry formula in Proposition \ref{translation-formula}.

\end{proof}

The coproduct of the quantum lattice operator has the following relation with the qKZ operator:
\begin{prop}
For arbitrary $\mc{L},\mc{L}'\in\mbb{Z}^I$ and the slope $s\in\mbb{R}^I$ we have the following commutation relations:
\begin{align*}
\mc{K}^s\Delta_s(\mbf{B}^s_{\mc{L}}(z))T_{\mc{L}}(\prod_{\mbf{m}\in[s,s+\mc{L})}^{\leftarrow}R_{\mbf{m}}^-)=\Delta_{s}(\mbf{B}^s_{\mc{L}}(z))T_{\mc{L}}(\prod_{\mbf{m}\in[s,s+\mc{L})}^{\leftarrow}R_{\mbf{m}}^-)\mc{K}^{s+\mc{L}}
\end{align*}
\end{prop}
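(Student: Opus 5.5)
The plan is to reduce the statement to a telescoping application of Theorem \ref{commuting-relation-for-qKZ-and-dynamical-monodromy:theorem}, applied along the finitely many walls between $s$ and $s+\mc{L}$. We work on a fixed block $\bigoplus_{\mbf{n}_1+\mbf{n}_2=\mbf{n}}V_{\mbf{n}_1}\otimes W_{\mbf{n}_2}$, as Theorem \ref{commuting-relation-for-qKZ-and-dynamical-monodromy:theorem} does. By Proposition \ref{finite-product-result:proposition}, only finitely many slopes $\mbf{m}_0,\dots,\mbf{m}_{n-1}$ in $[s,s+\mc{L})$ act nontrivially there. We then rewrite the left-hand operator using Proposition \ref{coproduct-quantum-difference-operator}:
\begin{align*}
\Delta_s(\mbf{B}^s_{\mc{L}}(z))T_{\mc{L}}\prod^{\leftarrow}_{\mbf{m}\in[s,s+\mc{L})}R_{\mbf{m}}^-=W_{\mbf{m}_0}(z)\cdots W_{\mbf{m}_{n-1}}(z)\prod^{\rightarrow}(R^-_{\mbf{m}})^{-1}T_{\mc{L}}^{-1}T_{\mc{L}}\prod^{\leftarrow}R^-_{\mbf{m}}.
\end{align*}
We need to move $T_{\mc{L}}^{\pm1}$ past the $R^-_{\mbf{m}}$ factors. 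These are independent of $z$, so $T_{\mc{L}}$ commutes with them and the $R^-$ products cancel. The left-hand side then becomes $\mc{K}^sW_{\mbf{m}_0}(z)\cdots W_{\mbf{m}_{n-1}}(z)$.

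Next we push $\mc{K}^s$ through the product one wall at a time. Write $s=s_0<s_1<\dots<s_n=s+\mc{L}$, where consecutive points are separated by the single slope $\mbf{m}_k$. The first identity of Theorem \ref{commuting-relation-for-qKZ-and-dynamical-monodromy:theorem} gives
\begin{align*}
\mc{K}^{s_k}W_{\mbf{m}_k}(z)=W_{\mbf{m}_k}(z)\mc{K}^{s_{k+1}}.
\end{align*}
Iterating this for $k=0,\dots,n-1$ yields $\mc{K}^sW_{\mbf{m}_0}\cdots W_{\mbf{m}_{n-1}}=W_{\mbf{m}_0}\cdots W_{\mbf{m}_{n-1}}\mc{K}^{s+\mc{L}}$. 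Undoing the rewriting of the first step turns this into the right-hand side of the statement.

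The main obstacle is the bookkeeping of where $T_{\mc{L}}$ and the $R^-$ products sit, and whether the quantity $\prod R^-$ is the same on both sides. The ordering conventions ($\rightarrow$ versus $\leftarrow$) and the fact that Proposition \ref{coproduct-quantum-difference-operator} was stated for $T_{\mc{L}}^{-1}\mbf{B}$ rather than $\mbf{B}$ must be reconciled. If the $R^-$ factors do not cancel on the nose, I would instead pass them through $\mc{K}^{s+\mc{L}}$. For this I would use the relation $(R_{\mbf{m}}^+)^{-1}\mc{R}^{s}(a)R_{\mbf{m}}^-=\mc{R}^{s'}(a)$ from the proof of Theorem \ref{commuting-relation-for-qKZ-and-dynamical-monodromy:theorem}, together with the translation formula of Proposition \ref{translation-formula}, which identifies $R^-_{\mbf{m}+\mc{L}}$ with $\tau_{\mc{L}\otimes\mc{L}}R^-_{\mbf{m}}$. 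This would absorb the mismatch into the shift $\mc{K}^{s}\leftrightarrow\mc{K}^{s+\mc{L}}$.

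A second point to check is that $\mc{K}^{s}$ involves $T_a$ and $z_{(1)}$ but not $T_{\mc{L}}$, and that the $z$-dependence of $W_{\mbf{m}}(z)$ is handled correctly. Since $z_{(1)}$ is just an operator acting on the module and $T_a$ only shifts $a$, the intertwining relation of Theorem \ref{commuting-relation-for-qKZ-and-dynamical-monodromy:theorem} already accounts for this, so the telescoping should go through verbatim.
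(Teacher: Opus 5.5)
Your proposal is correct and matches the paper's proof. Both rewrite $\Delta_s(\mbf{B}^s_{\mc{L}}(z))$ via Proposition \ref{coproduct-quantum-difference-operator} as $W_{\mbf{m}_0}(z)\cdots W_{\mbf{m}_{n-1}}(z)\prod^{\rightarrow}(R^-_{\mbf{m}})^{-1}T_{\mc{L}}^{-1}$ and push $\mc{K}^s$ through the $W$'s wall by wall using Theorem \ref{commuting-relation-for-qKZ-and-dynamical-monodromy:theorem}; you simply right-multiply by $T_{\mc{L}}\prod^{\leftarrow}R^-_{\mbf{m}}$ first instead of at the end. That cancellation happens on the nose, since $T_{\mc{L}}^{-1}T_{\mc{L}}$ are adjacent and the two ordered $R^-$ products are mutually inverse, so your fallback via Proposition \ref{translation-formula} is not needed.
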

\begin{proof}
The proof follows the following computation:
\begin{equation*}
\begin{aligned}
\mc{K}^s\Delta_s(\mbf{B}^s_{\mc{L}}(z))=&\mc{K}^sW_{\mbf{m}_0}(z)\cdots W_{\mbf{m}_{n-1}}(z)\prod_{\mbf{m}\in[s,s+\mc{L})}^{\rightarrow}(R_{\mbf{m}}^-)^{-1}T_{\mc{L}}^{-1}\\
=&W_{\mbf{m}_0}(z)\cdots W_{\mbf{m}_{n-1}}(z)\mc{K}^{s+\mc{L}}\prod_{\mbf{m}\in[s,s+\mc{L})}^{\rightarrow}(R_{\mbf{m}}^-)^{-1}T_{\mc{L}}^{-1}\\
=&
\Delta_{s}(\mbf{B}^s_{\mc{L}}(z))T_{\mc{L}}(\prod_{\mbf{m}\in[s,s+\mc{L})}^{\leftarrow}R_{\mbf{m}}^-)\mc{K}^{s+\mc{L}}\prod_{\mbf{m}\in[s,s+\mc{L})}^{\rightarrow}(R_{\mbf{m}}^-)^{-1}T_{\mc{L}}^{-1}
\end{aligned}
\end{equation*}
and here the second equality comes from Theorem \ref{commuting-relation-for-qKZ-and-dynamical-monodromy:theorem}.

\end{proof}

Now we come to the uniqueness of the quantum lattice operator:
\begin{thm}\label{path-independence-theorem}
The quantum lattice operator $\mbf{B}^s_{\mc{L}}(z)$ in \eqref{defn-of-quantum-lattice-operator} is independent of the choice of the path from $s$ to $s-\mc{L}$ as an operator in $\text{End}(V_{\mbf{n}})(z)$ for the weighted piece $V_{\mbf{n}}\subset V$ in the representation $V\in\mc{O}$.
\end{thm}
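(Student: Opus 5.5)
The approach follows Okounkov--Smirnov: reduce path independence to a local braid-type identity among the dynamical monodromy operators, and obtain that identity by comparing coproducts with the qKZ operators. By Proposition \ref{finite-product-result:proposition}, on a fixed weight space $V_{\mbf{n}}$ only the finitely many slopes lying on the hyperplane arrangement $M_{\mbf{n}}$ contribute. Any two catty-corner paths from $s$ to $s-\mc{L}$ can be deformed into one another through generic paths. The ordered product of the $\mbf{B}_{\mbf{m}}(z)$ changes only when the path crosses a codimension-two intersection of walls of $M_{\mbf{n}}$. It therefore suffices to show that, for a small loop around each such intersection $\mbf{m}_*$, the two ordered products of the $\mbf{B}_{\mbf{m}}(z)$ over the walls through $\mbf{m}_*$ (taken in the two possible orders) agree on $V_{\mbf{n}}$. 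The twist $\tau_{\mc{L}}$ is applied uniformly and does not affect this comparison.

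To prove the local identity, I use the coproduct picture. Place the two paths between generic slopes $s$ and $s'$ near $\mbf{m}_*$. By Theorem \ref{commuting-relation-for-qKZ-and-dynamical-monodromy:theorem}, each ordered product $W_{\mbf{m}_1}(z)\cdots W_{\mbf{m}_k}(z)$ along either path conjugates $\mc{K}^s$ to $\mc{K}^{s'}$. Here one uses that $(R^{[s,s')})^{-1}\mc{R}^s R^{-,[s,s')}=\mc{R}^{s'}$ is path independent, by the factorization \eqref{factorization-of-R-matrices}. Let $X$ be the ratio of the two $W$-products. Then $X$ commutes with $\mc{K}^{s'}$ on $V\otimes W$. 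Moreover, $X$ is triangular with identity leading term: by \eqref{coproduct-dynamical-monodromy}, $\Delta_{\mbf{m}}(\mbf{B}_{\mbf{m}})$ is built from the upper- and lower-triangular $\mbf{J}^{\pm}_{\mbf{m}}$ and the diagonal factors $\mbf{B}_{\mbf{m}}\otimes\mbf{B}_{\mbf{m}}$. By induction on $\mbf{n}$, the diagonal factors agree on lower weights.

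The key step is then a uniqueness statement for operators commuting with the qKZ operator $\mc{K}^{s'}=z_{(1)}T_a^{-1}\mc{R}^{s'}(a)$. Write $X=1+\sum_{\bm{\alpha}\neq 0}X_{\bm{\alpha}}$ in weight components. Commuting with $z_{(1)}q^{\Omega}T_a^{-1}$ forces each $X_{\bm{\alpha}}$ to satisfy a difference equation in $a$ with multiplier $z^{\bm{\alpha}}q^{\langle\bm{\alpha},\kappa\rangle}\neq 1$ for generic $z$. This is exactly the mechanism used in Proposition \ref{ABRR-equation}, so the only solution rational in $z$ and polynomial in $a^{\pm1}$ on each graded piece has $X_{\bm{\alpha}}=0$. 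Hence $X=1$, the two $W$-products coincide, and stripping off the $R^-_{\mbf{m}}$ factors shows that the $\Delta_{\mbf{m}_*}$-coproducts of the two $\mbf{B}$-products agree.

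It remains to pass from equality of coproducts back to equality on $V_{\mbf{n}}$ itself. I would take $W$ to be the trivial module, or use the counit $\epsilon\otimes 1$, which gives the identity directly. I expect the main obstacle to be the uniqueness step: justifying genericity of $z^{\bm{\alpha}}q^{\langle\bm{\alpha},\kappa\rangle}$, and making sense of $T_a$ acting on operators that are polynomial in $a$ rather than formal series. If this proves awkward, I would instead work directly at $V_{\mbf{n}}$ with the ABRR uniqueness of Proposition \ref{ABRR-equation}, applied to $\mc{B}_{\mbf{m}_*}$-type rank-two subalgebras spanned by the walls through $\mbf{m}_*$.
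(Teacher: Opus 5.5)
Your steps up to ``$X$ commutes with $\mc{K}^{s'}$'' are fine, and they are in substance the paper's starting point. By Proposition~\ref{coproduct-quantum-difference-operator} and \eqref{Relation-between-coproducts-of-different-slope}, your $X$ is just $\Delta_{s'}(D)$ for the ratio $D$ of the two $\mbf{B}$-products. The paper works with the global ratio, so the codimension-two localisation is not needed. The genuine gap is the uniqueness step, which carries the whole proof, and it fails for two reasons.

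First, $\mc{K}^{s'}=z_{(1)}T_a^{-1}\mc{R}^{s'}(a)$ contains the full $\mc{R}^{s'}=R^{\geq s'}R^{\infty}R^{-,<s'}$, not just $z_{(1)}q^{\Omega}T_a^{-1}$. This $R$-matrix has weight-raising components, weight-lowering components, and an $a$-dependent weight-preserving part coming from $R_\infty''$ and from products of the two triangular factors. Taking weight components of $X\mc{K}^{s'}=\mc{K}^{s'}X$ therefore couples all the $X_{\bm{\alpha}}$: already the weight-zero component pairs every $X_{\bm\alpha}$ with the component of $\mc{R}^{s'}$ of opposite weight. So the scalar difference equation you describe does not arise. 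The recursion of Proposition~\ref{ABRR-equation} works only because there both $R^{\pm}_{\mbf{m}}$ and the unknown are one-sided triangular.

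Second, $X$ is not triangular, because each $W_{\mbf{m}}=\mbf{J}^+_{\mbf{m}}(\mbf{B}_{\mbf{m}}\otimes\mbf{B}_{\mbf{m}})\mbf{J}^-_{\mbf{m}}R^-_{\mbf{m}}$ interleaves factors of both kinds. Moreover, the claim that its weight-preserving part is the identity is circular. On $V_{\mbf{n}}\otimes W_{\mbf{0}}$ and $V_{\mbf{0}}\otimes W_{\mbf{n}}$ that part involves the two $\mbf{B}$-products on the weight-$\mbf{n}$ space itself, which induction on $\mbf{n}$ does not reach. For $W$ trivial, $\mc{K}^{s'}$ reduces to $z_{(1)}T_a^{-1}$, the commutation is vacuous, and ``identity leading term'' is literally the statement to be proved.

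What is missing is a mechanism that controls those diagonal blocks. The paper gets it from the fact that $X$ is a coproduct.
\begin{itemize}
\item Using $\mc{R}^s\Delta_s(D)(\mc{R}^s)^{-1}=\Delta_s^{op}(D)$, commutation with $\mc{K}^s$ becomes the twisted relation $\Delta_s(D)(z,ap)=z_{(1)}\Delta_s^{op}(D)(z,a)z_{(1)}^{-1}$.
\item This is combined with $\Delta_s(D)$ being a Laurent polynomial in $a$ whose $a$-degrees have opposite signs on the two off-diagonal parts. Together they force $\Delta_s(D)$ to be block-diagonal and $a$-independent, hence equal to $D(zq^{\kappa_{(2)}})\otimes D(zq^{-\kappa_{(1)}})$.
\item The unknown diagonal blocks are then handled by an $\mc{R}$-swap. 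Intertwining with $\mc{R}^s(a)$ gives $D(zq^{\kappa_{(2)}})\otimes D(zq^{-\kappa_{(1)}})=D(zq^{-\kappa_{(1)}})\otimes D(zq^{\kappa_{(2)}})$.
\item Placing one factor on its extremal weight space yields $D(zq^{\kappa})=D(z)$. Rationality in $z$ then makes $D$ constant, and the normalisation of the $\mbf{B}_{\mbf{m}}(z)$ gives $D=\text{Id}$.
\end{itemize}
This $q$-periodicity-in-$z$ argument is the ingredient your proposal lacks. Your fallback via ABRR uniqueness on rank-two subalgebras would need an explicit, uniquely solvable equation satisfied by the local $\mbf{B}$-products, and none is identified.
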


\begin{proof}
Now given two quantum lattice operators $\mbf{B}^s_{\mc{L}}(z), \mbf{B}'^s_{\mc{L}}(z)$ which represents two different paths from $s$ to $s-\mc{L}$, we denote their difference by:
\begin{align*}
D(z):=\mbf{B}'^s_{\mc{L}}(z)(\mbf{B}^s_{\mc{L}}(z))^{-1}.
\end{align*}

It satisfies the following relation:
\begin{align*}
\mc{K}^s\Delta_s(D)(z,ap)=\Delta_s(D)(z,ap)\mc{K}^s.
\end{align*}

By the commutativity with the qKZ operator, we have the following difference equation:
\begin{align*}
\Delta_{s}(D)(z,ap)=z_{(1)}R^s(a)\Delta_{s}(D)(z,a)(z_{(1)}R^s(a))^{-1}=z_{(1)}\Delta_{s}^{op}(D)(z,a)z_{(1)}^{-1}
\end{align*}

By definition we know that $\Delta_{s}(D)(z,a)$ is a Laurent polynomial in $a$. Moreover it has positive degree of $a$ in the block upper-triangular part, and it has negative degree of $a$ in the block lower-triangular part. By definition we also know that it can be written in the following way:
\begin{align*}
\Delta_{s}(D)(z,a)=\sum_{\mbf{k}}\frac{D_{\mbf{k}}(a)}{(p^{k_1}-a_{k_1})\cdots(p^{k_n}-a_{k_n})}
\end{align*}
Here $D_{\mbf{k}}(a)$ is a Laurent polynomial in $a$ and contains no formal parametre $p$, and $q_{(1)}^{\lambda}$ contains no formal parametre $p$ as well. Then since $q^{\lambda}_{(1)}$ is a diagonal operator. We have the following difference equation:
\begin{align*}
\Delta_{s}(D)(z,ap^2)=z_{(1)}^{2}\Delta_{s}(D)(z,a)z_{(1)}^{-2}
\end{align*}

When restricted to the weighted pieces, the difference equation looks like the following type:
\begin{align}\label{qde-for-D-on-a}
\Delta_{s}(D)(z,ap^2)=z^{(\cdots)}\Delta_{s}(D)(z,a)
\end{align}
Now since $\Delta_{s}(D)(z,a)$ is a Laurent polynomial in $a$, and moreover, it is a monomial of $a$ on each weighted pieces, this means that $\Delta_{s}(D(z,a))$ is constant on $a$. The difference equation \ref{qde-for-D-on-a} implies that $\Delta_{s}(D)(z,a)$ is a block-diagonal operator.
Thus $D_{\mbf{k}}(a)=D_{\mbf{k}}$ is independent of $a$, which implies that $\Delta_{s}(D)(z,a)=\Delta_{s}(D)(z)$ is block-diagonal.

Then by the coproduct formula we know that:
\begin{align*}
\Delta_{s}(D(z))=D(zq^{\kappa_{(2)}})\otimes D(zq^{-\kappa_{(1)}})
\end{align*}

Thus we have that:
\begin{align*}
(D(zq^{\kappa_{(2)}})\otimes D(zq^{-\kappa_{(1)}}))\mc{R}^s(a)=\mc{R}^s(a)D(zq^{\kappa_{(2)}})\otimes D(zq^{-\kappa_{(1)}})
\end{align*}

But we know that $\mc{R}^s(a)\Delta_s(D(z))(\mc{R}^s(a))^{-1}=\Delta_s^{op}(D(z))$, thus we have that:
\begin{align*}
D(zq^{\kappa_{(2)}})\otimes D(zq^{-\kappa_{(1)}})=D(zq^{-\kappa_{(1)}})\otimes D(zq^{\kappa_{(2)}}).
\end{align*}

Thus let us choose $V_{\bm{\lambda}_i}\otimes W_{\mbf{n}}$ such that $V_{\bm{\lambda}_i}$ is the lowest weight vector space in $V\in\mc{O}$. In this case we have the difference equation on Kahler variables $z$:
\begin{align*}
1\otimes D(zq^{-\kappa_{(1)}})=1\otimes D(zq^{\kappa_{(2)}})
\end{align*}

This implies that $D(zq^{\kappa})=D(z)$, which is $q$-periodic. But we know that $D(z)$ is a rational function over $z$, and this means that $D(z)=D$ is constant over $z$. By the definition of $D(z)$ and the quantum difference operators, this means that $D=\text{Id}$ is the identity operator.
\end{proof}

\subsection{Algebraic quantum difference equations}

Fix the slope point $s\in\mbb{R}^I$ and the weighted pieces $V_{\mbf{n}}\subset V\in\mc{O}$, and we further assume that $V\in\mc{O}$ is a tensor product of semisimple modules in $\mc{O}$. We define the algebraic quantum difference equation as:
\begin{align}\label{defn-qde-equation}
\Psi^s(zp^{\mc{L}})=\mbf{M}_{\mc{L}}^s(z)\Psi^s(z)\in\text{End}(V_{\mbf{n}})((z)),\qquad\mbf{M}_{\mc{L}}^s(z):=\mbf{B}^s_{\mc{L}}(z)\mc{L}
\end{align}
and here $\mc{L}$ stands for the line bundle operator constructed in Lemma \ref{line-bundle-operator:lemma} corresponding to the automorphism $\tau_{\mc{L}}$ for $\mc{L}\in\mbb{Z}^I$. From now on we fix our module $V\in\mc{O}$ satisfying the condition in Lemma \ref{line-bundle-operator:lemma}.

Our main theorem in this subsection is that the algebraic quantum difference equation is well-defined on the weighted subspace $V_{\mbf{n}}\subset V\in\mc{O}$.

\begin{thm}\label{holonomicity-qde:theorem}
Fix the weighted subspace $V_{\mbf{n}}\subset V$ for a representation $V\in\mc{O}$ in the category $\mc{O}$. Then the algebraic quantum difference equation \ref{defn-qde-equation} with the image in $\text{End}(V_{\mbf{n}})((z^{\pm1}))$ is well-defined in the sense that the corresponding difference connection with different $\mc{L}$ commutes:
\begin{align*}
[T_{\mc{L}}^{-1}\mbf{M}_{\mc{L}}^s(z),T_{\mc{L}'}^{-1}\mbf{M}_{\mc{L}'}^s(z)]=0,\qquad\forall\mc{L},\mc{L}'\in\mbb{Z}^I
\end{align*}
\end{thm}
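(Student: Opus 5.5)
The plan is to reduce the commutativity to the path independence of Theorem \ref{path-independence-theorem}, together with the translation symmetry of Proposition \ref{translation-formula} and the intertwining property of the line bundle operators from Lemma \ref{line-bundle-operator:lemma}. First rewrite the difference operator. Since $\mc{L}$ intertwines $V$ with $V^{\tau_{\mc{L}}}$, for any $X\in\mbf{U}_Q$ acting on $V$ we have $\mc{L}X=\tau_{\mc{L}}(X)\mc{L}$, up to the sign convention for $\tau$ that must be fixed once. Also $\mc{L}$ is independent of $z$. Using the finite product description of Proposition \ref{finite-product-result:proposition}, the definition \eqref{defn-of-quantum-lattice-operator} and the intertwining relation, the operator $T_{\mc{L}}^{-1}\mbf{M}^s_{\mc{L}}(z)$ should become a word in the dynamical monodromy operators $\mbf{B}_{\mbf{m}}(z)$, with $\mbf{m}$ running along a segment from $s$, composed with a single factor $T_{\mc{L}}^{-1}\mc{L}$. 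The $\tau$ coming from $\mc{L}$ cancels the $\tau_{\mc{L}}$ in the definition, in the same way as in the proof of Proposition \ref{coproduct-quantum-difference-operator}.

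Second, compose. In
\begin{align*}
T_{\mc{L}}^{-1}\mbf{M}^s_{\mc{L}}(z)\,T_{\mc{L}'}^{-1}\mbf{M}^s_{\mc{L}'}(z),
\end{align*}
commute the factor $T_{\mc{L}}^{-1}\mc{L}$ past the second product. Proposition \ref{translation-formula} turns $T_{\mc{L}}^{-1}\mbf{B}_{\mbf{m}}(z)$ into $\tau_{\mc{L}^{-1}}\mbf{B}_{\mbf{m}+\mc{L}}(z)T_{\mc{L}}^{-1}$, and $\mc{L}$ undoes the $\tau_{\mc{L}^{-1}}$. The net effect is that the slopes of the second product are shifted by $\mc{L}$. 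The composite is then a product of $\mbf{B}_{\mbf{m}}(z)$ along a concatenated path from $s$ through $s-\mc{L}$ to $s-\mc{L}-\mc{L}'$, followed by $T_{\mc{L}+\mc{L}'}^{-1}\mc{L}\mc{L}'$ (the line bundle operators commute since $\tau_{\mc{L}}\tau_{\mc{L}'}=\tau_{\mc{L}+\mc{L}'}$ and by Schur's lemma, after normalising them compatibly). Doing the same for the other order gives the path through $s-\mc{L}'$ instead. In other words, $T^{-1}_{\mc{L}}\mbf{M}^s_{\mc{L}}$ composed with $T^{-1}_{\mc{L}'}\mbf{M}^s_{\mc{L}'}$ equals, up to the common operator $T^{-1}_{\mc{L}+\mc{L}'}\mc{L}_{\mc{L}+\mc{L}'}$, the lattice operator $\mbf{B}^s_{\mc{L}+\mc{L}'}$ evaluated along one of two different piecewise-linear paths from $s$ to $s-\mc{L}-\mc{L}'$.

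Third, invoke Theorem \ref{path-independence-theorem}. Restricted to $V_{\mbf{n}}$, both products are only finitely nontrivial and equal the same lattice operator $\mbf{B}^s_{\mc{L}+\mc{L}'}(z)$. Hence the two composites agree, and the commutator vanishes.

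The main obstacle is the bookkeeping in the second step. One has to check that the shift by $\tau_{\mc{L}}$ acts on the second product exactly as a translation of slopes by the correct sign, so that the concatenated path really ends at $s-\mc{L}-\mc{L}'$. One also has to check that the paths used are admissible for the path independence theorem. That theorem is stated for paths from $s$ to $s-\mc{L}$, so the concatenation of two segments must be allowed as a valid path, or else be approximated through the hyperplane finiteness of Proposition \ref{finite-product-result:proposition}. I would first settle the sign by testing the case $\mc{L}'=\mc{L}$, where commutativity is trivial and the concatenated path must reproduce $\mbf{B}^s_{2\mc{L}}$.
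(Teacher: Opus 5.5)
Your proposal is correct and is essentially the paper's own argument. The paper likewise uses Proposition \ref{translation-formula} and the intertwining property of the line bundle operators to move $T_{\mc{L}}^{-1}\mc{L}$ past the second product. Each ordering of the composite then becomes a product of $\mbf{B}_{\mbf{m}}(z)$ along a concatenated path (through the translate of $s$ by $\mc{L}$, resp.\ by $\mc{L}'$), followed by the common factor $T_{\mc{L}+\mc{L}'}^{-1}\mc{L}\mc{L}'$. The one difference is that you invoke Theorem \ref{path-independence-theorem} explicitly to identify both concatenations with the lattice operator for $\mc{L}+\mc{L}'$, whereas the paper uses it tacitly when it writes both orderings as $T_{\mc{L}\mc{L}'}^{-1}\mbf{B}^s_{\mc{L}\mc{L}'}(z)\mc{L}\mc{L}'$.
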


\begin{proof}
This follows from the following computation:
\begin{equation*}
\begin{aligned}
&T_{\mc{L}}^{-1}\mbf{B}_{\mc{L}}^s(z)\mc{L}T_{\mc{L}'}^{-1}\mbf{B}_{\mc{L}'}^s(z)\mc{L}=T_{\mc{L}}^{-1}\tau_{\mc{L}}\prod_{\mbf{m}\in[s,s-\mc{L})}^{\leftarrow}\mbf{B}_{\mbf{m}}(z)\mc{L}T_{\mc{L}'}^{-1}\tau_{\mc{L}'}\prod_{\mbf{m}\in[s,s-\mc{L}')}^{\leftarrow}\mbf{B}_{\mbf{m}}(z)\mc{L}'\\
=&\prod_{\mbf{m}\in[s+\mc{L},s)}^{\leftarrow}\mbf{B}_{\mbf{m}}(z)T_{\mc{L}}^{-1}\mc{L}\prod_{\mbf{m}\in[s+\mc{L}',s)}^{\leftarrow}\mbf{B}_{\mbf{m}}(z)T_{\mc{L}'}^{-1}\mc{L}'\\
=&\prod_{\mbf{m}\in[s+\mc{L},s)}^{\leftarrow}\mbf{B}_{\mbf{m}}(z)T_{\mc{L}}^{-1}\tau_{\mc{L}}\prod_{\mbf{m}\in[s+\mc{L}',s)}^{\leftarrow}\mbf{B}_{\mbf{m}}(z)T_{\mc{L}'}^{-1}\mc{L}'\mc{L}\\
=&\prod_{\mbf{m}\in[s+\mc{L},s)}^{\leftarrow}\mbf{B}_{\mbf{m}}(z)\prod_{\mbf{m}\in[s+\mc{L}+\mc{L}',s+\mc{L}')}^{\leftarrow}\mbf{B}_{\mbf{m}}(z)T_{\mc{L}\mc{L}'}^{-1}\mc{L}\mc{L}'\\
=&T_{\mc{L}\mc{L}'}^{-1}\mbf{B}^s_{\mc{L}\mc{L}'}(z)\mc{L}\mc{L}'.
\end{aligned}
\end{equation*}

On the other side of the computation for $T_{\mc{L}'}^{-1}\mbf{B}_{\mc{L}'}^s(z)\mc{L}'T_{\mc{L}}^{-1}\mbf{B}_{\mc{L}}^s(z)\mc{L}$, we still have the same result as $T_{\mc{L}\mc{L}'}^{-1}\mbf{B}^s_{\mc{L}\mc{L}'}(z)\mc{L}\mc{L}'$. Thus the result has been proved.

\end{proof}

\subsection{Commutativity with the qKZ equations}
\subsubsection{n-point qKZ operator}
qKZ equation was first proposed by \cite{FR92} as a set of $p$-difference equations to express the intertwining operators for the modules of the quantum affine algebras.

The general qKZ equation can be expressed in the following way: Given a positive integer $n$, the $i$-the qKZ operator is defined as:
\begin{align*}
\mc{A}_{i}^s(a_1,\cdots,a_{n+1}):=\mc{R}^s_{i-1,i}(\frac{a_{i-1}}{a_{i}p})^{-1}\cdots\mc{R}^s_{1,i}(\frac{a_{1}}{a_{i}p})^{-1}z_{(i)}\mc{R}_{i,n+1}^{s}(\frac{a_i}{a_{N+1}})\cdots\mc{R}_{i,i+1}^s(\frac{a_i}{a_{i+1}}).
\end{align*}

For example, when $n=1$:
\begin{align*}
\mc{A}_1^s(a_1,a_2)=z_{(1)}\mc{R}^s_{12}(\frac{a_1}{a_2}),\qquad\mc{A}_2^s(a_1,a_2)=\mc{R}^s_{12}(\frac{a_1}{a_2p})^{-1}z_{(2)}.
\end{align*}

The $n$-point qKZ equation is written as:
\begin{align*}
\Psi(a_1,\cdots,a_ip,\cdots,a_{n+1})=\mc{A}^s_i(a_1,\cdots,a_{n+1})\Psi(a_1,\cdots,a_i,\cdots,a_{n+1}).
\end{align*}

One can prove that the qKZ operator commutes with each other:
\begin{thm}
The qKZ equation is a holonomic difference module over the equivariant variable, i.e. the qKZ difference operators commute with each other.
\end{thm}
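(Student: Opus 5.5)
The plan is to reduce the commutativity of the qKZ operators to three standard properties of the slope-$s$ universal $R$-matrix $\mc{R}^s(a)$: the Yang--Baxter equation with spectral parameters, the fact that $\mc{R}^s$ commutes with the Cartan-type operator $z_{(i)}\otimes z_{(j)}$, and the cabling relations $(\Delta_{(s)}\otimes 1)\mc{R}^s=\mc{R}^s_{13}\mc{R}^s_{23}$, $(1\otimes\Delta_{(s)})\mc{R}^s=\mc{R}^s_{13}\mc{R}^s_{12}$. These hold because $\mc{R}^s$ is the universal $R$-matrix of the Drinfeld double of $\mc{A}^{\geq s}$ and $\mc{A}^{\leq s}$ for the coproduct $\Delta_{(s)}$ of Theorem \ref{new-new-coproduct:theorem}.

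\textbf{Step 1: spectral Yang--Baxter equation.} I would first establish
\[
\mc{R}^s_{12}(a_1/a_2)\,\mc{R}^s_{13}(a_1/a_3)\,\mc{R}^s_{23}(a_2/a_3)=\mc{R}^s_{23}(a_2/a_3)\,\mc{R}^s_{13}(a_1/a_3)\,\mc{R}^s_{12}(a_1/a_2).
\]
To get it, apply $D_{a_1}\otimes D_{a_2}\otimes D_{a_3}$ to the universal Yang--Baxter equation. Then use $(D_b\otimes D_b)\mc{R}^s=\mc{R}^s$. This invariance holds because $\mc{R}^s$ pairs dual bases of opposite loop degree: by the pairing $\langle e_{i,d},f_{j,k}\rangle=\delta_{ij}\delta_{d+k,0}$ and its shuffle extension, degrees cancel. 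The result is a statement about operators on $V_1(a_1)\otimes\cdots\otimes V_{n+1}(a_{n+1})$ in the category $\mc{O}$. These are well defined weight-block by weight-block, since each weight space is finite dimensional and the $R$-matrix is triangular.

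\textbf{Step 2: commutation with $z$.} Next I would show $[\mc{R}^s_{ij}(a),z_{(i)}z_{(j)}]=0$. Every term $x\otimes y$ of $\mc{R}^s$ has $\text{hdeg }x=-\text{hdeg }y$, so it preserves the total weight $\mbf{n}_i+\mbf{n}_j$. Only the individual $z_{(i)}$ fails to commute; conjugating by it rescales each off-diagonal block by a power $z^{\pm\bm{\alpha}}$.

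\textbf{Step 3: verify $\mc{A}_i^s(T_{a_j}\cdot)\mc{A}_j^s=\mc{A}_j^s(T_{a_i}\cdot)\mc{A}_i^s$ for $i<j$.} Following the classical argument of Frenkel--Reshetikhin \cite{FR92}, this amounts to the identity
\[
\mc{A}_i^s(\ldots,a_jp,\ldots)\,\mc{A}_j^s(\ldots)=\mc{A}_j^s(\ldots,a_ip,\ldots)\,\mc{A}_i^s(\ldots).
\]
Write both sides as ordered products of factors $\mc{R}_{kl}^{\pm1}$ together with $z_{(i)}$ and $z_{(j)}$. The factors involving neither $i$ nor $j$ are moved past each other using Step 1, in the form of the standard ``train'' argument. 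The single factor $\mc{R}_{ij}$ appears as $\mc{R}_{ij}(a_i/(a_jp))$ on both sides, with the $p$ shifts matching. The remaining issue is to move $z_{(i)}$ past the $\mc{R}_{kj}$ factors and $z_{(j)}$ past the $\mc{R}_{ik}$ factors. These commute trivially, since they act in different tensor slots. Across $\mc{R}_{ij}$ one uses $z_{(i)}z_{(j)}$ invariance from Step 2: one side produces $z_{(i)}\mc{R}_{ij}z_{(j)}$ and the other $z_{(j)}\mc{R}_{ij}z_{(i)}$, and these agree because $z_{(i)}z_{(j)}$ commutes with $\mc{R}_{ij}$ and $z_{(i)},z_{(j)}$ commute with each other.

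The main obstacle I expect is Step 3's bookkeeping of the $z$ operators. The placements in $\mc{A}_i^s$ are asymmetric: $z_{(i)}$ sits between the inverse $R$-matrices and the forward ones. A naive check gives $z_{(i)}\mc{R}_{ij}z_{(i)}^{-1}\neq\mc{R}_{ij}$, so I would first carefully verify that the two sides place $z_{(i)}$ and $z_{(j)}$ on opposite sides of $\mc{R}_{ij}$, making Step 2 exactly what is needed. A second, analytic concern is convergence of $\mc{R}^s(a)$ as a series in $a$. I would handle it by restricting to a fixed total weight $\bigoplus_{\sum\mbf{n}_k=\mbf{n}}$ and invoking Proposition \ref{finite-product-result:proposition}-type finiteness, so that on each block the identities are identities of rational functions in the $a_k$.
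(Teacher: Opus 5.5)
Your route is the paper's own: the Frenkel--Reshetikhin argument driven by the spectral Yang--Baxter equation. Your Steps 1 and 2 are exactly the right ingredients, and the paper's sketch never even states the weight-zero property of Step 2, which is indispensable. The gap is in Step 3. You correctly flag the $z$-bookkeeping as the main obstacle, but the resolution you propose is false.

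First, the $(i,j)$ factors do not agree across the two sides. In $\mc{A}^s_i(\ldots,a_jp,\ldots)\mc{A}^s_j$ they are $\mc{R}^s_{ij}(\frac{a_i}{a_jp})$ and its inverse. In $\mc{A}^s_j(\ldots,a_ip,\ldots)\mc{A}^s_i$ they are $\mc{R}^s_{ij}(\frac{a_i}{a_j})^{-1}$ and $\mc{R}^s_{ij}(\frac{a_i}{a_j})$.

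Second, the closing identity $z_{(i)}\mc{R}^s_{ij}z_{(j)}=z_{(j)}\mc{R}^s_{ij}z_{(i)}$ is equivalent to $[\mc{R}^s_{ij},z_{(i)}z_{(j)}^{-1}]=0$. Together with Step 2 this would force $[\mc{R}^s_{ij},z_{(i)}^2]=0$. But a term of $\mc{R}^s_{ij}$ shifting weights by $(\bm{\alpha},-\bm{\alpha})$ is rescaled by $z^{2\bm{\alpha}}$ under conjugation by $z_{(i)}z_{(j)}^{-1}$, so this fails whenever $\mc{R}^s_{ij}$ is not diagonal.

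Also, there are no factors involving neither $i$ nor $j$. Yang--Baxter is needed precisely to move the $(i,j)$ factors through the $\mc{R}^s_{ik},\mc{R}^s_{kj}$.

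The fix uses only your Steps 1 and 2, but asymmetrically.

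\emph{Side $\mc{A}^s_i(\ldots,a_jp,\ldots)\mc{A}^s_j$.} Train moves (Yang--Baxter for triples $i<k<j$ with parameters $(a_i,a_k,a_jp)$) bring $\mc{R}^s_{ij}(\frac{a_i}{a_jp})$ next to its inverse, and they cancel. Step 2 is not used there.

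\emph{Side $\mc{A}^s_j(\ldots,a_ip,\ldots)\mc{A}^s_i$.} The block $\prod_{k<i}\mc{R}^s_{ki}(\cdot)^{-1}$ commutes leftward past $z_{(j)}\prod_{l>j}\mc{R}^s_{jl}$. Likewise $z_{(i)}$ commutes leftward past $\prod_{l>j}\mc{R}^s_{jl}$, so $z_{(i)}z_{(j)}$ become adjacent. Yang--Baxter (parameters $(a_k,a_ip,a_jp)$ for $k<i$, and $(a_i,a_j,a_l)$ for $l>j$) then brings $\mc{R}^s_{ij}(\frac{a_i}{a_j})^{\mp1}$ to flank them. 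Step 2 enters exactly as $\mc{R}^s_{ij}(\frac{a_i}{a_j})^{-1}z_{(i)}z_{(j)}\mc{R}^s_{ij}(\frac{a_i}{a_j})=z_{(i)}z_{(j)}$.

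For three points and $(i,j)=(1,2)$, with the arguments $\frac{a_1}{a_2},\frac{a_1}{a_3},\frac{a_2}{a_3}$ suppressed, one gets
\[
\mc{A}^s_1(a_1,a_2p,a_3)\mc{A}^s_2=z_{(1)}z_{(2)}\mc{R}^s_{13}\mc{R}^s_{23},\qquad \mc{A}^s_2(a_1p,a_2,a_3)\mc{A}^s_1=(\mc{R}^s_{12})^{-1}z_{(1)}z_{(2)}\mc{R}^s_{23}\mc{R}^s_{13}\mc{R}^s_{12},
\]
and one Yang--Baxter move plus Step 2 identifies them.

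Finally, your analytic remark is off. On a weight block, $\mc{R}^s(a)$ is a genuinely infinite series, since all slopes above $s$ and $R^{\infty}$ contribute. So Proposition \ref{finite-product-result:proposition} does not apply, and rationality in $a$ is not available in this generality. What keeps everything consistent is that each factor occurring is some $\mc{R}^s_{kl}$ with $k<l$, expanded in $a_k/a_l$. Hence all identities hold in a single ring of formal series adapted to $|a_1|\ll\cdots\ll|a_{n+1}|$.
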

\begin{proof}
The proof is basically the same as the one in \cite{FR92}. We need to prove the following identity:
\begin{align*}
\mc{A}^s_i(a_1,\cdots,pa_j,\cdots,a_{n+1})\mc{A}^s_j(a_1,\cdots,a_{n+1})=\mc{A}^s_{j}(a_1,\cdots,pa_i,\cdots,a_{n+1})\mc{A}^s_i(a_1,\cdots,a_{n+1}).
\end{align*}

Without loss of generality, we can assume that $i<j$, and in this we write down the left hand side as:
\begin{equation*}
\begin{aligned}
&\mc{R}^s_{i-1,i}(\frac{a_{i-1}}{a_{i}p})^{-1}\cdots\mc{R}^s_{1,i}(\frac{a_{1}}{a_{i}p})^{-1}z_{(i)}\mc{R}_{i,n+1}^{s}(\frac{a_i}{a_{N+1}})\cdots\mc{R}^s_{i,j}(\frac{a_i}{pa_j})\cdots\mc{R}_{i,i+1}^s(\frac{a_i}{a_{i+1}})\times\\
\times&\mc{R}^s_{j-1,j}(\frac{a_{j-1}}{a_{j}p})^{-1}\cdots\mc{R}^s_{i,j}(\frac{a_i}{a_jp})^{-1}\cdots\mc{R}^s_{1,j}(\frac{a_{1}}{a_{j}p})^{-1}z_{(j)}\mc{R}_{j,n+1}^{s}(\frac{a_j}{a_{N+1}})\cdots\mc{R}_{j,j+1}^s(\frac{a_j}{a_{j+1}}).
\end{aligned}
\end{equation*}
Then we move the term $\mc{R}^s_{i,i+1}(\frac{a_i}{a_j})$ to the left hand side of $\mc{R}^s_{i+1,j}(\frac{a_{i+1}}{a_jp})^{-1}\mc{R}^s_{i,j}(\frac{a_{i}}{a_jp})^{-1}$. Using the Yang-Baxter equation:
\begin{align*}
\mc{R}^s_{i,i+1}(\frac{a_i}{a_{i+1}})\mc{R}^s_{i+1,j}(\frac{a_{i+1}}{a_jp})^{-1}\mc{R}^s_{i,j}(\frac{a_{i}}{a_jp})^{-1}=\mc{R}^s_{i,j}(\frac{a_{i}}{a_jp})^{-1}\mc{R}^s_{i+1,j}(\frac{a_{i+1}}{a_jp})^{-1}\mc{R}^s_{i,i+1}(\frac{a_i}{a_{i+1}}).
\end{align*}

\end{proof}

\subsubsection{Commutativity with qKZ operators}
The main theorem of the subsection is that the $n$-point qKZ operator commutes with the algebraic quantum difference operator:
\begin{thm}\label{qKZ-commutes-with-qde:theorem}
For the fixed tensor product of semisimple modules $V\in\mc{O}$, the $qKZ$ operator commutes with the algebraic quantum difference opeartor:
\begin{align*}
[T_{a_i}^{-1}\mc{A}^s_{i}(a_1,\cdots,a_{n+1}),\Delta_s^{n}(\mbf{M}_{\mc{L}}^s(z))]=0.
\end{align*}
\end{thm}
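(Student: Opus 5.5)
The plan is to reduce the statement to the two-point case already handled in Theorem \ref{commuting-relation-for-qKZ-and-dynamical-monodromy:theorem} and its corollary relating $\mc{K}^s$, $\Delta_s(\mbf{B}^s_{\mc{L}}(z))$ and $\mc{K}^{s+\mc{L}}$. The operator $T_{a_i}^{-1}\mc{A}_i^s$ only involves the $R$-matrices $\mc{R}^s_{k,i}$ and $\mc{R}^s_{i,l}$ together with $z_{(i)}$. We therefore group the tensor factors as $V_{<i}\otimes V_i\otimes V_{>i}$ and regard $\mc{A}_i^s$ as a product of two-point qKZ-type operators.

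Concretely, $\mc{R}^s_{i,i+1}\cdots\mc{R}^s_{i,n+1}$ is $(1\otimes\Delta_s^{n-i})\mc{R}^s$ evaluated on $V_i\otimes V_{>i}$, and $\mc{R}^s_{1,i}\cdots\mc{R}^s_{i-1,i}$ is $(\Delta_s^{i-2}\otimes1)\mc{R}^s$ on $V_{<i}\otimes V_i$. These identities hold up to the equivariant shifts $a_k/a_i$ absorbed by $D_a$. By coassociativity of $\Delta_s$, we have $\Delta^n_s(\mbf{M}^s_{\mc{L}}) = (\Delta_s\otimes 1)\Delta_s(\mbf{M}^s_{\mc{L}})$ in either bracketing. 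It thus suffices to show two things: first, that $z_{(1)}T_a^{-1}\mc{R}^s(a)$ commutes with $\Delta_s(\mbf{M}^s_{\mc{L}}(z))$ on a two-fold tensor product $V\otimes W$, where $V,W$ are themselves tensor products of semisimple modules, so that $V$ and $W$ carry line bundle operators by Lemma \ref{line-bundle-operator:lemma}; second, the analogous statement for $\mc{R}^s(a/p)^{-1}z_{(2)}$ placed on the other side.

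For the two-point statement, we start from the commutation relation
\[
\mc{K}^s\Delta_s(\mbf{B}^s_{\mc{L}})\,T_{\mc{L}}\prod^{\leftarrow}R^-_{\mbf{m}}=\Delta_s(\mbf{B}^s_{\mc{L}})\,T_{\mc{L}}\prod^{\leftarrow}R^-_{\mbf{m}}\,\mc{K}^{s+\mc{L}}.
\]
Next we multiply on the right by $\Delta_s(\mc{L})=\mc{L}\otimes\mc{L}$. The key identity needed is
\[
(\mc{L}\otimes\mc{L})^{-1}\,\mc{K}^{s+\mc{L}}\,(\mc{L}\otimes\mc{L}) = \Big(\prod^{\leftarrow}R^-_{\mbf{m}}\Big)^{-1}\mc{K}^s\prod^{\leftarrow}R^-_{\mbf{m}}
\]
with the appropriate $T_{\mc{L}}$ conjugation. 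It follows from three facts:
\begin{itemize}
\item $\mc{L}$ intertwines $\tau_{\mc{L}}$;
\item $\tau_{\mc{L}\otimes\mc{L}}R^\pm_{\mbf{m}}=R^\pm_{\mbf{m}+\mc{L}}$, which gives $\tau_{\mc{L}\otimes\mc{L}}\mc{R}^s=\mc{R}^{s+\mc{L}}$ via the factorisation $\mc{R}^s=R^{\geq s}R^\infty R^{-,<s}$, using that $R^\infty$ is $\tau$-invariant;
\item relation \eqref{universal-R-matrix-of-different-slopes}, which relates $\mc{R}^{s+\mc{L}}$ to $\mc{R}^s$ through the wall $R$-matrices on $[s,s+\mc{L})$.
\end{itemize}
The $z_{(1)}$ factor must also be shown to commute appropriately with $\mc{L}\otimes\mc{L}$ and $T_{\mc{L}}$. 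Since $z_{(1)}$ is diagonal in weight and $T_{\mc{L}}$ acts only on the Kähler variables, this is a bookkeeping check: $T_{\mc{L}}z_{(1)}T_{\mc{L}}^{-1}=p^{\langle\mc{L},\cdot\rangle}z_{(1)}$, and this factor must cancel against the weight shift produced by $\mc{L}$. Once the identity is in place, the $\prod R^-_{\mbf{m}}$ factors telescope and we obtain $[\mc{K}^s,\Delta_s(T_{\mc{L}}^{-1}\mbf{M}^s_{\mc{L}})]=0$. The second kind of factor is handled by the mirror argument with $(\mc{R}^s)^{-1}_{21}$, which is also a universal $R$-matrix for $\Delta_{(s)}$.

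Finally, we assemble the $n$-point result. In $\mc{A}_i^s$, each $\mc{R}_{k,i}$ and $\mc{R}_{i,l}$ factor intertwines $\Delta_s$ with $\Delta_s^{op}$ on the relevant pair, and each is conjugated compatibly by $\Delta_s^n(\mbf{M})$ by the two-point step applied to the grouped factors. Commuting $\Delta^n_s(\mbf{M}^s_{\mc{L}}(z))$ past the whole product then gives the claim. Here the intertwining properties $\mc{R}^s\Delta_s=\Delta_s^{op}\mc{R}^s$ and $\Delta_s^{op}$-coassociativity let us move the $R$-matrices across the iterated coproduct.

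The main obstacle is the two-point key identity. It requires matching $\tau_{\mc{L}}$ on the universal $R$-matrix $\mc{R}^s$, including the Cartan part $q^\Omega$ and the $h_{i,d}$ part $R''_\infty$, with the action of $\mc{L}\otimes\mc{L}$. It also requires tracking the shift $z\mapsto zp^{\mc{L}}$ against $z_{(1)}$ and $q^{\kappa}$. I would first verify this identity on weight spaces, using the explicit $\kappa$-computation from the cocycle proof to control the $q^\Omega$ and $z_{(1)}$ factors.
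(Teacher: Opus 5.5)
First, the key identity you propose for the two-point step, $(\mc{L}\otimes\mc{L})^{-1}\mc{K}^{s+\mc{L}}(\mc{L}\otimes\mc{L})=(\prod^{\leftarrow}R^-_{\mbf{m}})^{-1}\mc{K}^s\prod^{\leftarrow}R^-_{\mbf{m}}$, cannot hold, with or without a $T_{\mc{L}}$-conjugation. Use the convention $\mc{L}x\mc{L}^{-1}=\tau_{\mc{L}}(x)$. Then the equivariance $\tau_{\mc{L}\otimes\mc{L}}\mc{R}^s=\mc{R}^{s+\mc{L}}$ that you yourself invoke means conjugation by $\mc{L}\otimes\mc{L}$ and $T_{\mc{L}}$ only shifts the slope label of $\mc{R}$ and rescales $z_{(1)}$ by diagonal factors. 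So the left side is $\mc{K}^s$ up to diagonal bookkeeping. Conjugating $\mc{K}^s$ by the $z$-independent, non-diagonal wall $R$-matrices changes it genuinely. Wall-crossing \eqref{universal-R-matrix-of-different-slopes} multiplies by $(R^{[\mbf{m},\mbf{m}')})^{-1}$ on the left and by $R^{-,[\mbf{m},\mbf{m}')}$ on the right; it is not a conjugation. So nothing telescopes.

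The stray factor $\prod_{\mbf{m}\in[s,s+\mc{L})}(R^-_{\mbf{m}})^{-1}$ from Proposition \ref{coproduct-quantum-difference-operator} must instead be absorbed by the line bundle operator of the tensor product, not by $\mc{K}$. The relation $(\tau_{\mc{L}}\otimes\tau_{\mc{L}})\circ\Delta_s=\Delta_{s+\mc{L}}\circ\tau_{\mc{L}}$ is the analogue for $\Delta_s$ of the identity checked in the proof of Proposition \ref{translation-formula}. It shows that conjugation by $\mc{L}\otimes\mc{L}$ sends $\Delta_s(x)$ to $\Delta_{s+\mc{L}}(\tau_{\mc{L}}x)$, not to $\Delta_s(\tau_{\mc{L}}x)$. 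By \eqref{Relation-between-coproducts-of-different-slope}, the correct intertwiner on a $\Delta_s$-tensor product therefore carries exactly the wall factors $R^-_{\mbf{m}}$, $\mbf{m}\in[s,s+\mc{L})$, and these cancel the stray ones. This matches the paper's formula $\Delta_s(\mbf{M}^s_{\mc{L}}(z))=W_{\mbf{m}_0}(z)\cdots W_{\mbf{m}_{n-1}}(z)(\mc{L}\otimes\mc{L})T_{\mc{L}}^{-1}$, with no leftover wall factors. After that, only the plain equivariance $\mc{A}^{s+\mc{L}}_1(\mc{L}\otimes\mc{L})T_{\mc{L}}^{-1}=(\mc{L}\otimes\mc{L})T_{\mc{L}}^{-1}\mc{A}^s_1$ is used.

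Relatedly, $\mc{L}$ preserves weight spaces. So the factor $p^{\pm\mc{L}\cdot\mbf{n}_1}$ that $T_{\mc{L}}$ produces from $z_{(1)}$ cannot cancel against a weight shift of $\mc{L}$. It has to be matched by the $a$-dependence of $\mc{L}$ on the evaluation module, since $D_a$ and $\tau_{\mc{L}}$ commute only up to conjugation by $a^{\pm\mc{L}\cdot\mbf{n}}$ on $V_{\mbf{n}}$.

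Second, the general-$i$ case is not a product of your two two-point statements on the blocks $V_{<i}\otimes V_i$ and $V_i\otimes V_{>i}$. After moving $T_{a_i}^{-1}$ inward, $T_{a_i}^{-1}\mc{A}^s_i=(\mc{R}^s_{1,i}\cdots\mc{R}^s_{i-1,i})^{-1}\cdot T_{a_i}^{-1}z_{(i)}\mc{R}^s_{i,n+1}\cdots\mc{R}^s_{i,i+1}$, and neither factor commutes with $\Delta^n_s(\mbf{M}^s_{\mc{L}}(z))$. The two-point statement on $V_i\otimes V_{>i}$ only controls $\Delta_s(\mbf{M}^s_{\mc{L}})$ on that pair, not the iterated coproduct in which $V_{<i}$ is entangled.

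Push $\Delta^n_s(\mbf{M}^s_{\mc{L}})$ through both $R$-matrix blocks using $\mc{R}^s\Delta_s=\Delta^{op}_s\mc{R}^s$. What remains is that $T^{-1}_{a_i}z_{(i)}$ must intertwine the iterated coproduct for the ordering $(V_{<i},V_{>i},V_i)$ with the one for $(V_i,V_{<i},V_{>i})$. That is the two-point statement for $V_i\otimes W$ with the non-contiguous block $W=V_{<i}\otimes V_{>i}$, which your grouping never produces. This is the missing step, and it is how the paper argues. It braids $V_i$ to the front with $\mc{R}^s_{1,i}\cdots\mc{R}^s_{i-1,i}$, turning $\Delta_s^n$ into $(\Delta^{op}_s)^{i-1}\Delta_s^{n-i+1}$, and applies the $i=1$ case on the reordered product. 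Your mirror statement for $\mc{R}^s(a/p)^{-1}z_{(2)}$ is then not needed.

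For $i=1$ itself, by contrast, your grouping $V_1\otimes(V_2\otimes\cdots\otimes V_{n+1})$ is a legitimate shortcut. Since $\mc{R}^s_{1,n+1}\cdots\mc{R}^s_{1,2}=(1\otimes\Delta^{n-1}_s)\mc{R}^s$ and $\Delta^n_s=(1\otimes\Delta^{n-1}_s)\Delta_s$, the two-point theorem with $W$ a tensor product replaces the paper's iterated fusion-operator Lemma \ref{higher-coproduct-fusion-operator-ABBR-equation:label} and the two propositions built on it. This requires that $W$ carry the corrected line bundle operator described above.
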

\begin{proof}
We first prove the theorem for the case when $n=1$. It is reduced to prove for $i=1$. It has been known that:
\begin{align*}
\Delta_s(\mbf{M}_{\mc{L}}^s(z))=W_{\mbf{m}_0}(z)W_{\mbf{m}_1}(z)\cdots W_{\mbf{m}_{n-1}}(z)(\mc{L}\otimes\mc{L})T_{\mc{L}}^{-1},\qquad W_{\mbf{m}}(z)=\Delta_{\mbf{m}}(\mbf{B}_{\mbf{m}}(z))(R_{\mbf{m}}^-)
\end{align*}
and using the relation:
\begin{align*}
T_{a_1}^{-1}\mc{A}^s_{1}(a_1,a_2)W_{\mbf{m}}(z)=W_{\mbf{m}}(z)T_{a_1}^{-1}\mc{A}^{s'}_1(a_1,a_2).
\end{align*}

Thus we have that:
\begin{equation*}
\begin{aligned}
&T_{a_1}^{-1}\mc{A}^s_{1}(a_1,a_2)\Delta_s(\mbf{M}_{\mc{L}}^s(z))=T_{a_1}^{-1}\mc{A}^s_{1}(a_1,a_2)W_{\mbf{m}_0}(z)W_{\mbf{m}_1}(z)\cdots W_{\mbf{m}_n}(z)(\mc{L}\otimes\mc{L})T_{\mc{L}}^{-1}\\
=&W_{\mbf{m}_0}(z)W_{\mbf{m}_1}(z)\cdots W_{\mbf{m}_n}(z)T_{a_1}^{-1}\mc{A}^{s+\mc{L}}_{1}(a_1,a_2)(\mc{L}\otimes\mc{L})T_{\mc{L}}^{-1}\\
=&W_{\mbf{m}_0}(z)W_{\mbf{m}_1}(z)\cdots W_{\mbf{m}_n}(z)(\mc{L}\otimes\mc{L})T_{\mc{L}}^{-1}T_{a_1}^{-1}\mc{A}^{s}_{1}(a_1,a_2)\\
=&\Delta_s(\mbf{M}_{\mc{L}}^s(z))T_{a_1}^{-1}\mc{A}^{s}_{1}(a_1,a_2).
\end{aligned}
\end{equation*}

Then we prove the theorem for the case when $i=1$, i.e.
\begin{align*}
[T_{u_1}^{-1}z_{(1)}\mc{R}_{1,n+1}^s(\frac{a_1}{a_{n+1}})\cdots\mc{R}_{1,2}^s(\frac{a_1}{a_2}),\Delta_s^n(\mbf{M}_{\mc{L}}^s(z))]=0.
\end{align*}

Now for the general $n$, we first compute $\Delta_{s}^n(\mbf{M}^s_{\mc{L}}(z))$:
\begin{equation*}
\begin{aligned}
&\Delta_{s}^n(\mbf{M}^s_{\mc{L}}(z))=(\text{Id}^{\otimes n-1}\otimes\Delta_s)\cdots\Delta_s(\mbf{M}^s_{\mc{L}}(z))\\
=&(\text{Id}^{\otimes n-1}\otimes\Delta_s)\cdots(\text{Id}\otimes\Delta_s)(W_{\mbf{m}_0}(z)\cdots W_{\mbf{m}_n}(z)(\mc{L}\otimes\mc{L})T_{\mc{L}}^{-1})\\
=&W_{\mbf{m}_0}^{(n)}(z)\cdots W_{\mbf{m}_n}^{(n)}(z)\Delta_{s}^{n-1}(\mc{L}\otimes\mc{L})T_{\mc{L}}^{-1}.
\end{aligned}
\end{equation*}

Here:
\begin{align*}
W_{\mbf{m}}^{(n)}(z)=\Delta_{\mbf{m}}^n(\mbf{B}_{\mbf{m}}(z))(R_{\mbf{m}}^-)_{1,2}\cdots(R_{\mbf{m}}^-)_{1,n+1}.
\end{align*}

When $n=1$, it has been known that:
\begin{align*}
\Delta_{\mbf{m}}(\mbf{B}_{\mbf{m}}(z))=\mbf{J}_{\mbf{m}}^+(z)(\mbf{B}_{\mbf{m}}(zq^{\kappa_{(2)}})\otimes\mbf{B}_{\mbf{m}}(zq^{-\kappa_{(1)}}))\mbf{J}_{\mbf{m}}^-(z)
\end{align*}
and by the definition of $\Delta_{\mbf{m}}^n$ we have that:
\begin{equation*}
\begin{aligned}
\Delta_{\mbf{m}}^n(\mbf{B}_{\mbf{m}}(z))=&\prod_{k=1}^{\substack{\rightarrow\\n-1}}(\prod_{i=1}^{n-k}(\Delta_{\mbf{m}}\otimes\text{Id}^{n-i}))(\mbf{J}_{\mbf{m}}^{+}(zq^{\kappa_{(n)}+\cdots+\kappa_{(n-k+1)}}))(\otimes_{l=1}^{n+1}\mbf{B}_{\mbf{m}}(zq^{\sum_{i\neq l}(-1)^{\theta(i-l)}\kappa_{i}}))\times\\
&\times\prod_{k=1}^{\substack{\leftarrow\\n-1}}(\prod_{i=1}^{n-k}(\Delta_{\mbf{m}}\otimes\text{Id}^{n-i}))(\mbf{J}_{\mbf{m}}^{-}(zq^{\kappa_{(n)}+\cdots+\kappa_{(n-k+1)}}))
\end{aligned}
\end{equation*}
and here $\theta(x)$ is the step function:
\begin{align*}
\theta(x)=
\begin{cases}
0&x\geq0\\
1&x<0
\end{cases}.
\end{align*}

For example, when $n=3$, we have that:
\begin{equation*}
\begin{aligned}
&\Delta_{\mbf{m}}^3(\mbf{B}_{\mbf{m}}(z))=\mbf{J}_{\mbf{m}}^+(z)^{12,3,4}\mbf{J}_{\mbf{m}}^+(zq^{\kappa_{(4)}})^{12,3}\mbf{J}_{\mbf{m}}^+(zq^{\kappa_{(3)}+\kappa_{(4)}})^{12}\times\\
&\times(\mbf{B}_{\mbf{m}}(zq^{\kappa_{(2)}+\kappa_{(3)}+\kappa_{(4)}})\otimes\mbf{B}_{\mbf{m}}(zq^{-\kappa_{(1)}+\kappa_{(3)}+\kappa_{(4)}})\otimes\mbf{B}_{\mbf{m}}(zq^{-\kappa_{(1)}-\kappa_{(2)}+\kappa_{(4)}})\otimes\mbf{B}_{\mbf{m}}(zq^{-\kappa_{(1)}-\kappa_{(2)}-\kappa_{(3)}}))\\
&\times\mbf{J}_{\mbf{m}}^-(zq^{\kappa_{(3)}+\kappa_{(4)}})^{12}\mbf{J}_{\mbf{m}}^-(zq^{\kappa_{(4)}})^{12,3}\mbf{J}_{\mbf{m}}^-(z)^{12,3,4}.
\end{aligned}
\end{equation*}

The following lemma is useful in the computation.
\begin{lem}\label{higher-coproduct-fusion-operator-ABBR-equation:label}
The operator $J(z):=\prod_{k=1}^{\substack{\leftarrow\\n-1}}(\prod_{i=1}^{n-k}(\Delta_{\mbf{m}}\otimes\text{Id}^{n-i}))(\mbf{J}_{\mbf{m}}^{-}(zq^{\kappa_{(n)}+\cdots+\kappa_{(n-k+1)}}))$ satisfies the following equation:
\begin{align*}
J(z)T_{a_1}z_{(1)}^{-1}(R_{\mbf{m}}^-)_{1,n+1}^{-1}\cdots(R_{\mbf{m}}^-)_{1,2}^{-1}=T_{a_1}z_{(1)}^{-1}q^{\Omega_{1,n+1}+\cdots\Omega_{1,2}}J(z)
\end{align*}
and similarly, the operator $J(z):=\prod_{k=1}^{\substack{\rightarrow\\n-1}}(\prod_{i=1}^{n-k}(\Delta_{\mbf{m}}\otimes\text{Id}^{n-i}))(\mbf{J}_{\mbf{m}}^{+}(zq^{\kappa_{(n)}+\cdots+\kappa_{(n-k+1)}}))$ satisfies the following equation:
\begin{align*}
(R_{\mbf{m}}^+)_{1,2}^{-1}\cdots(R_{\mbf{m}}^+)_{1,n+1}^{-1}T_{a_1}z_{(1)}^{-1}J(z)=J(z)q^{\Omega_{1,2}+\cdots+\Omega_{1,n+1}}T_{a_1}z_{(1)}^{-1}.
\end{align*}
\end{lem}
\begin{proof}
If we denote $J(z):=\prod_{k=1}^{\substack{\leftarrow\\n-1}}(\prod_{i=1}^{n-k}(\Delta_{\mbf{m}}\otimes\text{Id}^{n-i}))(J_{\mbf{m}}^{-}(zq^{\kappa_{(n)}+\cdots+\kappa_{(n-k+1)}}))$. By computation we have that:
\begin{equation*}
\begin{aligned}
&\prod_{k=1}^{\substack{\leftarrow\\n-1}}(\prod_{i=1}^{n-k}(\Delta_{\mbf{m}}\otimes\text{Id}^{n-i}))(J_{\mbf{m}}^{-}(zq^{\kappa_{(n)}+\cdots+\kappa_{(n-k+1)}}))z_{(n+1)}(R_{\mbf{m}}^-)^{-1}_{1,n+1}\cdots(R_{\mbf{m}}^-)^{-1}_{1,2}\\
=&\prod_{k=1}^{\substack{\leftarrow\\n-2}}(\prod_{i=1}^{n-k}(\Delta_{\mbf{m}}\otimes\text{Id}^{n-i}))(J_{\mbf{m}}^{-}(zq^{\kappa_{(n)}+\cdots+\kappa_{(n-k+1)}}))\prod_{i=1}^{n-1}(\Delta_{\mbf{m}}\otimes\text{Id}^{n-i})J_{\mbf{m}}^-(z)z_{(n+1)}(R_{\mbf{m}}^-)^{-1}_{1,n+1}\cdots(R_{\mbf{m}}^-)^{-1}_{1,2}\\
=&\prod_{k=1}^{\substack{\leftarrow\\n-2}}(\prod_{i=1}^{n-k}(\Delta_{\mbf{m}}\otimes\text{Id}^{n-i}))(J_{\mbf{m}}^{-}(zq^{\kappa_{(n)}+\cdots+\kappa_{(n-k+1)}}))\prod_{i=1}^{n-1}(\Delta_{\mbf{m}}\otimes\text{Id}^{n-i})J_{\mbf{m}}^-(z)z_{(1)}^{-1}\cdots z_{(n)}^{-1}(R_{\mbf{m}}^-)^{-1}_{1,n+1}\cdots(R_{\mbf{m}}^-)^{-1}_{1,2}\\
=&\prod_{k=1}^{\substack{\leftarrow\\n-2}}(\prod_{i=1}^{n-k}(\Delta_{\mbf{m}}\otimes\text{Id}^{n-i}))(J_{\mbf{m}}^{-}(zq^{\kappa_{(n)}+\cdots+\kappa_{(n-k+1)}}))z_{(1)}^{-1}\cdots z_{(n)}^{-1}q^{\Omega_{1,n+1}+\cdots+\Omega_{n,n+1}}\prod_{i=1}^{n-1}(\Delta_{\mbf{m}}\otimes\text{Id}^{n-i})J_w^-(z)\\
=&\prod_{k=1}^{\substack{\leftarrow\\n-2}}(\prod_{i=1}^{n-k}(\Delta_{\mbf{m}}\otimes\text{Id}^{n-i}))(J_{\mbf{m}}^{+}(zq^{\kappa_{(n)}+\cdots+\kappa_{(n-k+1)}}))z_{(n+1)}q^{\Omega_{1,n+1}+\cdots+\Omega_{n,n+1}}\prod_{i=1}^{n-1}(\Delta_{\mbf{m}}\otimes\text{Id}^{n-i})J_{\mbf{m}}^-(z)\\
=&z_{(n+1)}q^{\Omega_{1,n+1}+\cdots+\Omega_{n,n+1}}\prod_{k=1}^{\substack{\leftarrow\\n-2}}(\prod_{i=1}^{n-k}(\Delta_{\mbf{m}}\otimes\text{Id}^{n-i}))(J_{\mbf{m}}^{-}(zq^{\kappa_{(n)}+\cdots+\kappa_{(n-k+1)}}))\prod_{i=1}^{n-1}(\Delta_{\mbf{m}}\otimes\text{Id}^{n-i})J_{\mbf{m}}^-(z).
\end{aligned}
\end{equation*}

Thus we can see that $J(z)$ satisfies the following equation:
\begin{align*}
J(z)z_{(n+1)}(R_{\mbf{m}}^{-})^{-1}_{1,n+1}\cdots(R_{\mbf{m}}^-)^{-1}_{n,n+1}=z_{(n+1)}q^{\Omega_{1,n+1}+\cdots+\Omega_{n,n+1}}J(z)
\end{align*}
where we can show that it is also the solution to the equation:
\begin{align*}
J(z)z_{(n+1)}(R_{\mbf{m}}^-)^{-1}_{1,n+1}\cdots(R_{\mbf{m}}^-)^{-1}_{n,n+1}=z_{(n+1)}q^{\Omega_{1,n+1}+\cdots+\Omega_{n,n+1}}J(z).
\end{align*}

Next we show that $J(z)$ also satisfies the following equation:
\begin{align*}
J(z)z_{(1)}^{-1}(R_{\mbf{m}}^-)^{-1}_{1,n+1}\cdots(R_{\mbf{m}}^-)^{-1}_{1,2}=z_{(1)}^{-1}q^{\Omega_{1,n+1}+\cdots+\Omega_{1,2}}J(z).
\end{align*}

We define two operators:
\begin{equation*}
\begin{aligned}
&A_{R}(X):=q^{-\Omega_{1,n+1}-\cdots-\Omega_{n,n+1}}z_{(n+1)}^{-1}Xz_{(n+1)}(R_{\mbf{m}}^-)^{-1}_{1,n+1}\cdots(R_{\mbf{m}}^-)^{-1}_{n,n+1}\\
&A_{L}(X):=q^{-\Omega_{1,n+1}-\cdots-\Omega_{1,2}}z_{(1)}Xz_{(1)}^{-1}(R_{\mbf{m}}^-)^{-1}_{1,n+1}\cdots(R_{\mbf{m}}^-)^{-1}_{1,2}
\end{aligned}
\end{equation*}
and by the Yang-Baxter equation, $[A_{L},A_{R}]=0$. Thus if $A_{R}(X)=X$, we have that $A_{L}(X)$ is also a solution. Similarly, one can see that the solution to the equation $A_{R}(X)=X$ is uniquely determined by the degree zero part of the $(n+1)$-th component, and we denote the corresponding component as $X_0$. It remains to prove that $A_L(X)_0=X_0$. For $X_0$ we have that:
\begin{align*}
X_0=J_{\mbf{m}}^-(zq^{\kappa_{(n)}+\cdots+\kappa_{(3)}})^{12}
\end{align*}
and the proof here is totally similar to the one given in Proposition 10 in \cite{OS22}. Thus the proof is finished.
\end{proof}

Let us denote the operator:
\begin{align*}
\mc{K}_{\mbf{m},n+1}^-=T_{a_1}z_{(1)}^{-1}(R_{\mbf{m}}^-)^{-1}_{1,n+1}\cdots(R_{\mbf{m}}^-)^{-1}_{1,2},\qquad\mc{K}_{\mbf{m},n+1}^{+}=(R_{\mbf{m}}^+)^{-1}_{1,2}\cdots(R_{\mbf{m}}^+)^{-1}_{1,n+1}T_{a_1}z_{(1)}^{-1}.
\end{align*}

\begin{prop}
\begin{equation*}
\mc{K}_{\mbf{m},n+1}^{+}\Delta_{\mbf{m}}^n(\mbf{B}_{\mbf{m}}(z))=\Delta_{\mbf{m}}^n(\mbf{B}_{\mbf{m}}(z))\mc{K}_{\mbf{m},n+1}^-.
\end{equation*}
\end{prop}
\begin{proof}
This can be seen from the direct computations:
\begin{equation*}
\begin{aligned}
&\mc{K}_{\mbf{m},n+1}^{+}\Delta_{\mbf{m}}^n(\mbf{B}_{\mbf{m}}(z))\\
=&(R_{\mbf{m}}^+)^{-1}_{1,2}\cdots(R_{\mbf{m}}^+)^{-1}_{1,n+1}T_{a_1}z_{(1)}^{-1}\prod_{k=1}^{\substack{\rightarrow\\n-1}}(\prod_{i=1}^{n-k}(\Delta_{\mbf{m}}\otimes\text{Id}^{n-i}))(\mbf{J}_{\mbf{m}}^{+}(zq^{\kappa_{(n)}+\cdots+\kappa_{(n-k+1)}}))\times\\
&\times(\otimes_{l=1}^{n+1}\mbf{B}_{\mbf{m}}(zq^{\sum_{i\neq l}(-1)^{\theta(i-l)}\kappa_{i}}))\prod_{k=1}^{\substack{\leftarrow\\n-1}}(\prod_{i=1}^{n-k}(\Delta_{\mbf{m}}\otimes\text{Id}^{n-i}))(\mbf{J}_{\mbf{m}}^{+}(zq^{\kappa_{(n)}+\cdots+\kappa_{(n-k+1)}}))\\
=&\prod_{k=1}^{\substack{\rightarrow\\n-1}}(\prod_{i=1}^{n-k}(\Delta_{\mbf{m}}\otimes\text{Id}^{n-i}))(\mbf{J}_{\mbf{m}}^{+}(zq^{\kappa_{(n)}+\cdots+\kappa_{(n-k+1)}}))(\otimes_{l=1}^{n+1}\mbf{B}_{\mbf{m}}(zq^{\sum_{i\neq l}(-1)^{\theta(i-l)}\kappa_{i}}))\\
&\times T_{a_1}z_{(1)}^{-1}q^{\Omega_{1,n+1}+\cdots+\Omega_{1,2}}\prod_{k=1}^{\substack{\leftarrow\\n-1}}(\prod_{i=1}^{n-k}(\Delta_{\mbf{m}}\otimes\text{Id}^{n-i}))(\mbf{J}_{\mbf{m}}^{+}(zq^{\kappa_{(n)}+\cdots+\kappa_{(n-k+1)}}))\\
=&\prod_{k=1}^{\substack{\rightarrow\\n-1}}(\prod_{i=1}^{n-k}(\Delta_{\mbf{m}}\otimes\text{Id}^{n-i}))(\mbf{J}_{\mbf{m}}^{+}(zq^{\kappa_{(n)}+\cdots+\kappa_{(n-k+1)}}))(\otimes_{l=1}^{n+1}\mbf{B}_{\mbf{m}}(zq^{\sum_{i\neq l}(-1)^{\theta(i-l)}\kappa_{i}}))\\
&\times\prod_{k=1}^{\substack{\leftarrow\\n-1}}(\prod_{i=1}^{n-k}(\Delta_{\mbf{m}}\otimes\text{Id}^{n-i}))(\mbf{J}_{\mbf{m}}^{+}(zq^{\kappa_{(n)}+\cdots+\kappa_{(n-k+1)}}))T_{a_1}z_{(1)}^{-1}(R_{\mbf{m}}^-)^{-1}_{1,n+1}\cdots(R_{\mbf{m}}^-)^{-1}_{1,2}\\
=&\Delta_{\mbf{m}}^n(\mbf{B}_{\mbf{m}}(z))\mc{K}_{\mbf{m},n+1}^-
\end{aligned}
\end{equation*}
the second and third equalities come from Lemma \ref{higher-coproduct-fusion-operator-ABBR-equation:label}.
Thus the proof is finished.

\end{proof}

Given $n\geq1$, we define the operator:
\begin{align*}
W_{\mbf{m}}^{(n)}(z):=\Delta_{\mbf{m}}^{n-1}(\Delta_{\mbf{m}}(\mbf{B}_{\mbf{m}}(z))R_{\mbf{m}}^-).
\end{align*}

Using the above proposition, we can have the following commutation relation between the qKZ operator and the dynamical monodromy operator:

\begin{prop}
\begin{equation*}
\mc{A}^{s,qKZ,(n)}_{1}W_{s}^{(n)}(z)=W_{s}^{(n)}(z)z_{(1)}T_{a_1}^{-1}\Delta_{s}^{n-1}(\mc{R}^{s'}).
\end{equation*}

Here $\Delta_{s}^{n-1}(\mc{R}^{s'})$ means the image of $\Delta_{s}^{n-1}(\mc{R}^{s'})$ under the evaluation representation $V_1(a_1)\otimes\cdots\otimes V_{n+1}(a_{n+1})$ with $V_1,\cdots,V_{n+1}\in\mc{O}$.
\end{prop}
\begin{proof}
\begin{equation*}
\begin{aligned}
&\mc{A}^{s,qKZ,(n)}_{1}W_{s}^{(n)}(z)=T_{a_1}^{-1}z_{(1)}\mc{R}_{1,n+1}^s(\frac{a_1}{a_{n+1}})\cdots\mc{R}_{1,2}^s(\frac{a_1}{a_2})\Delta_{s}^n(\mbf{B}_{s}(z))(R_{s}^-)_{n,n+1}\cdots(R_{s}^-)_{1,n}\\
=&T_{a_1}^{-1}z_{(1)}(\Delta_{s}^n)^{op}(\mbf{B}_{s}(z))\mc{R}_{1,n+1}^s(\frac{a_1}{a_{n+1}})\cdots\mc{R}_{1,2}^s(\frac{a_1}{a_2})(R_{s}^-)_{n,n+1}\cdots(R_{s}^-)_{1,n}\\
=&T_{a_1}^{-1}z_{(1)}((R_{s}^{+})^{-1}_{1,2}\cdots(R_{s}^{+})^{-1}_{1,n+1})^{-1}(R_{s}^{+})^{-1}_{1,2}\cdots(R_{s}^{+})^{-1}_{1,n+1}(\Delta_{s}^n)^{op}(\mbf{B}_{s}(z))\times\\
&\times\mc{R}_{1,n+1}^s(\frac{a_1}{a_{n+1}})\cdots\mc{R}_{1,2}^s(\frac{a_1}{a_2})(R_{s}^-)_{n,n+1}\cdots(R_{s}^-)_{1,n}\\
=&T_{a_1}^{-1}z_{(1)}((R_{s}^{+})^{-1}_{1,2}\cdots(R_{s}^{+})^{-1}_{1,n+1})^{-1}(\Delta_{s}^n)(\mbf{B}_{s}(z))(R_{s}^{+})^{-1}_{1,2}\cdots(R_{s}^{+})^{-1}_{1,n+1}\times\\
&\times\mc{R}_{1,n+1}^s(\frac{a_1}{a_{n+1}})\cdots\mc{R}_{1,2}^s(\frac{a_1}{a_2})(R_{s}^-)_{1,n+1}\cdots(R_{s}^-)_{1,n}\\
=&(\Delta_{s}^n)(\mbf{B}_{s}(z))((R_{s}^{-})^{-1}_{1,n+1}\cdots(R_{s}^{-})^{-1}_{1,2})^{-1}z_{(1)}T_{a_1}^{-1}\Delta_{s}^{n-1}(\mc{R}^{s'})
\end{aligned}
\end{equation*}
and here the last equality comes from Lemma \ref{higher-coproduct-fusion-operator-ABBR-equation:label} and the formula \eqref{universal-R-matrix-of-different-slopes}.

\end{proof}
Now combining the above result, we have that:
\begin{equation*}
\begin{aligned}
&\mc{A}^{s,qKZ,(n)}_1\Delta_{s}^{n}(\mbf{M}_{\mc{L}}^s(z))=\mc{A}^{s,qKZ,(n)}W_{\mbf{m}_0}^{(n)}(z)\cdots W_{\mbf{m}_l}^{(n)}(z)\Delta_{s}^{n-1}(\mc{L}\otimes\mc{L})T_{\mc{L}}^{-1}\\
=&W_{\mbf{m}_0}^{(n)}(z)\cdots W_{\mbf{m}_l}^{(n)}(z)z_{(1)}T_{a_1}^{-1}\Delta_{s}^{n-1}(\mc{R}^{s+\mc{L}}(\mc{L}\otimes\mc{L}))T_{\mc{L}}^{-1}\\
=&W_{\mbf{m}_0}^{(n)}(z)\cdots W_{\mbf{m}_l}^{(n)}(z)z_{(1)}T_{a_1}^{-1}\Delta_{s}^{n-1}((\mc{L}\otimes\mc{L})\mc{R}^{s})T_{\mc{L}}^{-1}\\
=&\Delta_{s}^{n}(\mbf{M}_{\mc{L}}^s(z))\mc{A}^{s,qKZ,(n)}_1.
\end{aligned}
\end{equation*}

Thus we have proved the theorem for $i=1$.

For the general $i$, note that the theorem can be written in the following way:
\begin{equation*}
\begin{aligned}
&\mc{R}^{s}_{i-1,i}(\frac{a_{i-1}}{a_{i}})^{-1}\cdots\mc{R}^s_{1,i}(\frac{a_1}{a_i})^{-1}T_{a_i}^{-1}z_{(i)}\mc{R}^s_{i,n+1}(\frac{a_{i}}{a_{N+1}})\cdots\mc{R}^s_{i,i+1}(\frac{a_i}{a_{i+1}})\Delta_{s}^n(\mc{B}^s_{\mc{L}})\\
=&\Delta_{s}^n(\mc{B}^s_{\mc{L}})\mc{R}^{s}_{i-1,i}(\frac{a_{i-1}}{a_{i}})^{-1}\cdots\mc{R}^s_{1,i}(\frac{a_1}{a_i})^{-1}T_{a_i}^{-1}z_{(i)}\mc{R}^s_{i,n+1}(\frac{a_{i}}{a_{N+1}})\cdots\mc{R}^s_{i,i+1}(\frac{a_i}{a_{i+1}})
\end{aligned}
\end{equation*}
and this is equivalent to the following equation:
\begin{equation*}
\begin{aligned}
&T_{a_i}^{-1}z_{(i)}\mc{R}^s_{i,n+1}(\frac{a_{i}}{a_{N+1}})\cdots\mc{R}^s_{i,i+1}(\frac{a_i}{a_{i+1}})\Delta_{s}^n(\mc{B}^s_{\mc{L}})=\mc{R}_{1,i}^{s}(\frac{a_1}{a_i})\cdots\mc{R}_{i-1,i}^s(\frac{a_{i-1}}{a_i})\times\\
&\times\Delta_{s}^n(\mc{B}^s_{\mc{L}})\mc{R}^{s}_{i-1,i}(\frac{a_{i-1}}{a_{i}})^{-1}\cdots\mc{R}^s_{1,i}(\frac{a_1}{a_i})^{-1}T_{a_i}^{-1}z_{(i)}\mc{R}^s_{i,n+1}(\frac{a_{i}}{a_{N+1}})\cdots\mc{R}^s_{i,i+1}(\frac{a_i}{a_{i+1}})\\
=&(\Delta_{s}^{op})^{i-1}\Delta_{s}^{n-i+1}(\mc{B}^s_{\mc{L}})T_{a_i}^{-1}z_{(i)}\mc{R}^s_{i,n+1}(\frac{a_{i}}{a_{N+1}})\cdots\mc{R}^s_{i,i+1}(\frac{a_i}{a_{i+1}}).
\end{aligned}
\end{equation*}

On the other hand the formula can be rewritten as:
\begin{equation*}
\begin{aligned}
&T_{a_i}^{-1}z_{(i)}\mc{R}^s_{i,n+1}(\frac{a_{i}}{a_{N+1}})\cdots\mc{R}^s_{i,i+1}(\frac{a_i}{a_{i+1}})\Delta_{s}^n(\mbf{M}^s_{\mc{L}}(z))\\
=&T_{a_i}^{-1}z_{(i)}\mc{R}^s_{i,n+1}(\frac{a_{i}}{a_{N+1}})\cdots\mc{R}^s_{i,i+1}(\frac{a_i}{a_{i+1}})\mc{R}^{s}_{i,i-1}(\frac{a_i}{a_{i-1}})\cdots\mc{R}^s_{i,1}(\frac{a_i}{a_1})\times\\
&\times(\Delta_s^{op})^{i-1}\Delta_{s}^{n-i+1}(\mbf{M}_{\mc{L}}^s(z))\mc{R}^s_{i,1}(\frac{a_i}{a_1})^{-1}\cdots\mc{R}^s_{i,i-1}(\frac{a_i}{a_{i-1}})^{-1}.
\end{aligned}
\end{equation*}

Thus the identity has been reduced to the following form:
\begin{equation*}
\begin{aligned}
&T_{a_i}^{-1}z_{(i)}\mc{R}^s_{i,n+1}(\frac{a_{i}}{a_{N+1}})\cdots\mc{R}^s_{i,i+1}(\frac{a_i}{a_{i+1}})\mc{R}^{s}_{i,i-1}(\frac{a_i}{a_{i-1}})\cdots\mc{R}^s_{i,1}(\frac{a_i}{a_1})(\Delta_s^{op})^{i-1}\Delta_{s}^{n-i+1}(\mbf{M}_{\mc{L}}^s(z))\\
=&(\Delta_{s}^{op})^{i-1}\Delta_{s}^{n-i+1}(\mbf{M}^s_{\mc{L}}(z))T_{a_i}^{-1}z_{(i)}\mc{R}^s_{i,n+1}(\frac{a_{i}}{a_{N+1}})\cdots\mc{R}^s_{i,i+1}(\frac{a_i}{a_{i+1}})\mc{R}^{s}_{i,i-1}(\frac{a_i}{a_{i-1}})\cdots\mc{R}^s_{i,1}(\frac{a_i}{a_1}).
\end{aligned}
\end{equation*}

This identity is equivalent to commuting the qKZ equation of the first variable with the algebraic quantum difference operator via the following braiding exchange:
\begin{equation*}
\begin{aligned}
&V_1\otimes\cdots V_{i-1}\otimes V_i\otimes\cdots\otimes V_{n+1}\\
\rightarrow&V_1\otimes\cdots\otimes V_{i-2}\otimes V_{i}\otimes V_{i-1}\otimes\cdots\otimes V_{n+1}\\
\rightarrow& V_1\otimes\cdots\otimes V_{i}\otimes V_{i-2}\otimes V_{i-1}\otimes\cdots\otimes V_{n+1}\\
\rightarrow&\cdots\cdots\cdots\cdots\\
\rightarrow&V_i\otimes V_1\otimes\cdots V_{i-1}\otimes V_{i+1}\otimes\cdots\otimes V_{n+1}
\end{aligned}
\end{equation*}
and the action of the quantum difference operator $\mbf{M}^s_{\mc{L}}(z)$ on $V_i\otimes V_1\otimes\cdots\otimes V_{i-1}\otimes V_{i+1}\otimes\cdots\otimes V_{n+1}$ coincides with $(\Delta_{s}^{op})^{i-1}\Delta_{s}^{n-i+1}(\mbf{M}^s_{\mc{L}}(z))$. Thus we just need to repeat the above proof for the case $i=1$. Now the proof is finished.
\end{proof}

\section{\textbf{Examples}}\label{sec:_textbf_examples}
In this section we illustrate some examples 
\subsection{Quantum loop group of finite type quivers}
For the case of the quantum loop group of symmetrisable Cartan matrices. If the Cartan matrix is of the finite type, the example was shown in \ref{ssub:quantum_loop_group_associated_to_finite_type_symmetrisable_cartan_matrix}. In this case, the slope subalgebra $\mc{B}_{\mbf{m}}$ can be thought of as the quantum root subalgebras in the quantum loop group $U_q(L\mf{g})$.

We make the following definition. Fix the quantum loop group $U_{q}(L\mf{g})$ with the fixed finite type symmetrisable Cartan matrix $C$. Let $\mf{g}$ be the corresponding Lie algebra of the Cartan matrix type $C$. We denote its Chevalley generators by $e_1,\cdots,e_n,h_1,\cdots,h_n,f_1,\cdots,f_n$. We also denote the corresponding lattice by $\mbb{Z}^n$ such that each triples of generators $e_i,f_i,h_i$ corresponds to the unit vector $\mbf{e}_i\in\mbb{Z}^n$. Thus each $\mf{sl}_2$-triple in $\mf{g}$ corresponds to a point $\bm{\alpha}$ in the lattice $\mbb{Z}^n$.

Given a vector $\mbf{m}\in\mbb{R}^n$, we define the Lie subalgebra $\mf{g}_{\mbf{m}}$ of $\mf{g}$ as the Lie algebra generated by the $\mf{sl}_2$-triple $e_{\bm{\alpha}},h_{\bm{\alpha}},f_{\bm{\alpha}}$ such that $\mbf{m}\cdot\bm{\alpha}\in\mbb{Z}$.

It is obvious that the Lie algebra $\mf{g}_{\mbf{m}}$ is also semisimple. It also admits a natural Hopf algebra deformation to a quantum group, and we denote it by $U_q(\mf{g}_{\mbf{m}})$.

We have the following conjectural description for the slope subalgebra $\mc{B}_{\mbf{m}}$ of $U_{q}(L\mf{g})$.
\begin{conjecture}\label{refined-structure-for-quantum-affine-algebra:conjecture}
The slope subalgebra $\mc{B}_{\mbf{m}}$ for the quantum loop group $U_{q}(L\mf{g})$ of finite type is isomorphic to the quantum group $U_{q}(\mf{g}_{\mbf{m}})$ for some corresponding semisimple Lie algebra $\mf{g}_{\mbf{m}}$ which is a Lie subalgebra of $\mf{g}$ determined by $\mbf{m}$.
\end{conjecture}

For the case when the Cartan matrix $C$ is of the finite $A$ type. This has been proved in \cite{N15} by using the same argument from the case of the quantum toroidal algebras. For the rest of the case, we will prove the conjecture in a forthcoming paper in the near future.

However, for the slope factorisation for the positive/negative half of the quantum loop group $U_{q}^{\pm}(L\mf{g})$. We have the following result:
\begin{thm}\label{generic-factorization-for-quantum-loop-group:theorem}
For the quantum loop group with finite type symmetrisable Cartan matrix, there is a cutty-corner curve $\mu$ such that for the factorisation:
\begin{align*}
U_{q}^{+}(L\mf{g})\cong\bigotimes^{\rightarrow}_{t\in\mbb{R}}\mc{B}_{\mu(t)}^+
\end{align*}
such that each slope subalgebra $\mc{B}_{\mu(t)}^+$ on this curve is either trivial or of the $U_{q}(\mf{sl}_2)^+$ type.
\end{thm}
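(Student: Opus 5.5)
The plan is to choose a generic catty-corner curve, so that along it every slope subalgebra is controlled by a single positive root. Then I would show that such a slope subalgebra is either trivial or a copy of $U_q(\mf{sl}_2)^+$.

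\textbf{Choice of curve.} Take $\mu(t)=t\bm{\theta}+\mbf{c}$, where $\bm{\theta}\in\mbb{R}_{>0}^I$ and $\mbf{c}\in\mbb{R}^I$ are chosen so that their coordinates, together with $1$, are linearly independent over $\mbb{Q}$. This is a catty-corner curve, since every coordinate is strictly increasing and tends to $\pm\infty$. Fix $t$. The degrees $\mbf{n}\in\mbb{N}^I$ with $\mu(t)\cdot\mbf{n}\in\mbb{Z}$ form a sublattice intersected with the cone. Genericity forces this sublattice to have rank at most one. Indeed, if $\mu(t)\cdot\mbf{n}=k$ and $\mu(t)\cdot\mbf{n}'=k'$ with $\mbf{n},\mbf{n}'$ independent, we can eliminate $t$ and get $(k'\bm{\theta}\cdot\mbf{n}-k\bm{\theta}\cdot\mbf{n}')+(\mbf{c}\cdot\mbf{n}')(\bm{\theta}\cdot\mbf{n})-(\mbf{c}\cdot\mbf{n})(\bm{\theta}\cdot\mbf{n}')=0$. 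This is a nontrivial polynomial relation with rational coefficients in the coordinates. Since only countably many such relations exist, a generic choice of $(\bm{\theta},\mbf{c})$ avoids them all. Hence $\mc{B}^+_{\mu(t)}$ is supported in degrees $\mbb{N}\bm{\alpha}$ for a single primitive $\bm{\alpha}\in\mbb{N}^I$, or it is trivial.

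\textbf{Identifying the rank-one pieces.} I would compare graded dimensions. The slope factorization gives
\[\mathrm{ch}\,U_q^+(L\mf{g})=\prod_t \mathrm{ch}\,\mc{B}^+_{\mu(t)}.\]
By the PBW theorem for $U_q^+(L\mf{g})$ (Beck's loop-root-vector basis), the character is $\prod_{\bm{\beta}\in\Delta_+}\prod_{d\in\mbb{Z}}(1-x^{\bm{\beta}}y^d)^{-1}$ times the imaginary factors. In the positive half considered through shuffle elements, the imaginary factors are absent for $\mbf{U}^+$ apart from degrees $\mbb{N}\bm{\delta}$; since $\mf{g}$ is of finite type, there is no $\bm{\delta}$.

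Along the generic curve, each pair $(\bm{\beta},d)$ with $\bm{\beta}$ a positive root occurs at exactly one $t$, namely the solution of $\mu(t)\cdot\bm{\beta}=d$. This suggests that $\mc{B}^+_{\mu(t)}$ has character $(1-x^{\bm{\beta}}y^d)^{-1}$ when $\bm{\alpha}=\bm{\beta}$ is a root, and is trivial otherwise. To prove this rather than guess it, I would argue in three steps.

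First, the root vector $E_{\bm{\beta},d}$ of Beck/Damiani type can be represented by a shuffle element of slope $\leq\mu(t)$ in degree $(\bm{\beta},d)$, so $\mc{B}^+_{\mu(t)|\bm{\beta}}\neq0$. To see this, I would use the wheel and Serre description of $\mc{S}^+$ and check the slope condition on an explicit rational function; for type $A$ one can use the formulas of \cite{N15}.

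Second, I need upper bounds: $\dim\mc{B}^+_{\mu(t)|k\bm{\beta}}\leq1$, and $\mc{B}^+_{\mu(t)|k\bm{\alpha}}=0$ when $\bm{\alpha}\notin\Delta_+$. These follow by matching the two characters above. Since each factor $\mc{B}^+_{\mu(t)}$ contains the subalgebra generated by $E_{\bm{\beta},d}$, whose character is at least $(1-x^{\bm{\beta}}y^d)^{-1}$, equality of the infinite products forces equality factor by factor. The ordering along $t$ is the only subtlety, and it is handled by induction on $|\mbf{n}|$ together with finite-dimensionality of the graded pieces.

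Third, the algebra structure. A graded algebra of character $(1-x)^{-1}$ generated by one element is a polynomial algebra $\mbb{K}[E]$, which is $U_q(\mf{sl}_2)^+$. The bialgebra structure $\Delta_{\mu(t)}$ and the Cartan pairing with $h_{i,0}$ identify the double with $U_{q_{\bm{\beta}}}(\mf{sl}_2)$.

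The main obstacle is the first step: producing the root vector as an explicit shuffle element of the correct slope, together with the PBW character input. If the explicit construction fails, I would instead transport the known type-$A$ root vectors via braid group (Lusztig) automorphisms, or use that $\mc{B}^+_{\mu(t)|\bm{\beta}}$ is nonzero by nondegeneracy of the pairing restricted to slope pieces combined with the character count.
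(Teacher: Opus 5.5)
Your outline follows the paper's. Its Step 1 is your generic-curve argument: every slope point on the curve has a lattice $\{\mbf n:\mu(t)\cdot\mbf n\in\mbb Z\}$ of rank at most one. Its Step 2 is exactly your input: a Cartan--Weyl/PBW basis of root vectors indexed by $\Delta^+\times\mbb Z$ (taken from [NT24], [KT91]) together with reducedness of $\Delta$, asserted without further bookkeeping. The one place where you go beyond the paper, however, does not work as written.

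\textbf{The character comparison is not meaningful.}

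The $(\mbf n,d)$-graded pieces of $U_q^+(L\mathfrak g)$ are infinite-dimensional as soon as $|\mbf n|\geq 2$. For instance, the elements $e_{i,a}e_{i,-a}$, $a\geq 0$, are linearly independent in degree $(2\mbf e_i,0)$. Correspondingly, $\prod_{\bm\beta,d}(1-x^{\bm\beta}y^d)^{-1}$ has infinite coefficients.

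So there is no identity of formal series to compare factor by factor. The ``finite-dimensionality of the graded pieces'' you want for the induction is false; only slope-bounded pieces such as $\mbf U^+_{Q,\leq\mbf m|\mbf n,d}$ are finite-dimensional.

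Passing to a window $\mbf U^+_{Q,[\mu(t_1),\mu(t_2)]}\cong\bigotimes_{t\in[t_1,t_2]}\mc B^+_{\mu(t)}$ restores finiteness. But then PBW independence only gives lower bounds, both for the window and for each factor. An upper bound requires knowing that the window is spanned by PBW monomials in root vectors with $t\in[t_1,t_2]$, and that is the slope compatibility you are trying to prove.

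What you actually need is a PBW spanning statement in the order the curve induces on $\Delta_+\times\mbb Z$: $(\bm\beta,d)\prec(\bm\beta',d')$ iff $t(\bm\beta,d)<t(\bm\beta',d')$, where $t(\bm\beta,d)=(d-\mbf c\cdot\bm\beta)/(\bm\theta\cdot\bm\beta)$. This order is convex, because $t(\bm\beta+\bm\beta',d+d')$ is a mediant of the two values.

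Grant that spanning, together with your first step $E_{\bm\beta,d}\in\mc B^+_{\mu(t(\bm\beta,d))}$. Then no characters are needed:
\begin{itemize}
\item Expand $x\in\mc B^+_{\mu(t_0)|\mbf n}$ in ordered monomials.
\item Each monomial is the image of a pure tensor lying in its own multidegree component of $\bigotimes_t\mc B^+_{\mu(t)}$.
\item Injectivity of the factorization map therefore leaves only a power of the single root vector at $t_0$. There is at most one such vector, since real roots are primitive in $\mbb Z^I$, so $\mbb N\bm\alpha_{t_0}$ contains at most one root.
\item PBW independence shows these powers are nonzero.
\end{itemize}
Both inputs, the slope of the root vectors and spanning in the curve's order, are exactly what the paper also leaves to the Cartan--Weyl literature. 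That is where the real content of the theorem sits.

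\textbf{Your genericity condition is too weak as stated.}

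$\mbb Q$-linear independence of $1$ and the coordinates does not exclude $\mbf c\in\mbb R\bm\theta$. Take $\bm\theta=(\sqrt2,\sqrt3)$ and $\mbf c=(\sqrt5,\sqrt{30}/2)=\sqrt{5/2}\,\bm\theta$. Then $\mu$ passes through $\mbf 0$, where every $\mbf n$ is integral and $\mc B^+_{\mbf 0}$ contains all the $e_{i,0}$.

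What you need is what your next sentence says: avoid the countably many nonzero quadratic relations, for example by taking algebraically independent coordinates. With that replacement, your first step is fine.
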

\begin{proof}

The proof can be given by the following steps:

\textbf{Step 1: }For the generic $\mbf{m}$, it is easy to see that generically for the nontrivial slope subalgebra $\mc{B}_{\mbf{m}}$, $\mbf{m}$ can be chosen generically satisfying the following property: There exists $\mbf{n}\in\mbb{N}^I$ such that $\mbf{m}\cdot\mbf{n}\in\mbb{Z}$ and $\mbf{m}\cdot\mbf{n}'\notin\mbb{Z}$ for $\mbf{n}'\notin\mbb{Z}\mbf{n}$. Thus we can choose the cutty-corner curve $\mu:\mbb{R}\rightarrow\mbb{R}^I$ such that for the slope factorisation:
\begin{align*}
U_{q}^+(L\mf{g})\cong\bigotimes^{\rightarrow}_{t\in\mbb{R}}\mc{B}_{\mu(t)}^+
\end{align*}
the slope subalgebra $\mc{B}_{\mu(t)}$ is of the above generic type.

\textbf{Step 2: }
The quantum loop group $U_{q}(L\mf{g})$ has the Drinfeld-Jimbo realisation. One can also construct the basis for $U_{q}^+(L\mf{g})$ using the Cartan-Weyl basis, and for the details one can also refer to \cite{NT24} for details. The Cartan-Weyl basis corresponds to the affine roots for $U_{q}^+(L\mf{g})$, whcih is is indexed by $\Delta^+\times\mbb{Z}$, which can be identified as the subset in $\mbb{N}^I\times\mbb{Z}$ via identifying each simple root $\alpha_i\in\Delta$ as the $i$-th unit vector $\mbf{e}_i\in\mbb{N}^I$. By the construction in \cite{KT91}, each point in $\Delta^+\times\mbb{Z}\subset\mbb{N}^I\times\mbb{Z}$ will correspond to a $U_{q}^+(\mf{sl}_2)$.

For a generic type slope subalgebra $\mc{B}_{\mbf{m}}^+$ with fixed $\mbf{n}$ such that $\mbf{m}\cdot\mbf{n}\in\mbb{Z}$, i.e.
\begin{align*}
\mc{B}_{\mbf{m}}^+=\bigoplus_{\ell\in\mbb{N}}\mc{B}_{\mbf{m},\ell\mbf{n}}^+.
\end{align*}

Now using the fact that if $\alpha\in\Delta$, then $k\alpha\notin\Delta$ for $k\neq-1,1$. This implies that generically $\mc{B}_{\mbf{m}}^+$ is isomorphic to $U_q^{+}(\mf{sl}_2)$.

\end{proof}

\subsubsection{Dynamical monodromy operator for the $U_{q}(L\mf{sl}_2)$}\label{ssub:dynamical_monodromy_operator_for_the_u_q}
At the very beginning, we compute the dynamical monodromy operator for the quantum loop group $U_q(L\mf{sl}_2)$. In this case the slope subalgebra is nonempty if and only if $m\in\mbb{Z}$, and each slope subalgebra $\mc{B}_{m}$ is isomorphic to $U_{q}(\mf{sl}_2)$ with the generators given by $e_{m}$, $f_{-m}$ and $h_0^{\pm}$. It has the reduced universal $R$-matrix given by:
\begin{align*}
R_{m}^+=\sum_{n=0}^{\infty}\frac{1}{[n]_q!}e_{m}^n\otimes f_{-m}^n.
\end{align*}

Now recall the ABRR equation
\begin{align*}
J_{m}^+(z)z_{(1)}^{-1}q^{\Omega}R_{m}^+=z_{(1)}^{-1}q^{\Omega}J_m^+(z)
\end{align*}
without loss of generality, we can assume that the fusion operator $J_m^+(z)$ can be written as:
\begin{align*}
J_m^+(z)=\sum_{n=0}^{\infty}J_{m,n}^+(z)e_{m}^n\otimes f_{-m}^n.
\end{align*}

This computation has been done in \cite{OS22}, 
and we have that:
\begin{align*}
(1\otimes S_{m})J_m^+(z)=\sum_{k=0}^{\infty}\frac{(-1)^k}{[k]_q!\prod_{i=1}^k(1-z^{-1}(h_0^{+}\otimes h_0^-)q^{2i})}e_m^k\otimes f_{-m}^k.
\end{align*}

Thus for now we have that the dynamical monodromy operator is written as:
\begin{align}\label{sl2-monodromy-operator}
\mbf{B}_m(z)=\sum_{k=0}^{\infty}\mbf{m}[\frac{(-1)^k}{[k]_q!\prod_{i=1}^k(1-z^{-1}p^m(h_0^{+}\otimes h_0^+)q^{2i})}e_m^k\otimes f_{-m}^k].
\end{align}

\subsubsection{General case}

In this case, we will prove the following theorem:
\begin{thm}
For every quantum difference operator $\mbf{M}_{\mc{L}}^s(z):=\mbf{B}^s_{\mc{L}}(z)\mc{L}$, if we fix a weighted subspace $V_{\mbf{n}}$ in a semisimple module $V\in\mc{O}$ there is a generic path from $s$ to $s-\mc{L}$ such that each dynamical monodromy operator $\mbf{B}_{\mbf{m}}(z)$ is written as:
\begin{align*}
\mbf{B}_{\mbf{m}}(z)=\sum_{k=0}^{\infty}\mbf{m}[\frac{(-1)^k\gamma_{\mbf{m}}^k}{[k]_q!\prod_{i=1}^k(1-z^{-l_{\mbf{m}}}p^{l_{\mbf{m}}\cdot\mbf{m}}(h_{\mbf{m}}^+\otimes h_{\mbf{m}}^-)q^{2il_{\mbf{m}}})}e_{\mbf{m}}^k\otimes f_{-\mbf{m}}^{k}]
\end{align*}
where $\gamma_{\mbf{m}}$ is a constant given by $\langle e_{\mbf{m}},f_{-\mbf{m}}\rangle=\gamma_{\mbf{m}}^{-1}$.
\end{thm}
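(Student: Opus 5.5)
The plan is to reduce the statement to the rank-one computation of Subsection \ref{ssub:dynamical_monodromy_operator_for_the_u_q}, using the generic factorisation of Theorem \ref{generic-factorization-for-quantum-loop-group:theorem} together with path independence.

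\textbf{Step 1: choice of path.} By Theorem \ref{path-independence-theorem}, the action of $\mbf{B}^s_{\mc{L}}(z)$ on the fixed weighted piece $V_{\mbf{n}}$ does not depend on the path from $s$ to $s-\mc{L}$. By Proposition \ref{finite-product-result:proposition}, only the finitely many slopes lying on the hyperplane arrangement $M_{\mbf{n}}$ act nontrivially. We therefore perturb the path, in the spirit of Step 1 of the proof of Theorem \ref{generic-factorization-for-quantum-loop-group:theorem}, so that every slope $\mbf{m}$ it meets in $M_{\mbf{n}}$ lies on exactly one hyperplane $\{\mbf{m}\cdot\mbf{n}'\in\mbb{Z}\}$. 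For such $\mbf{m}$ there is a primitive $\bm{\alpha}_{\mbf{m}}=l_{\mbf{m}}^{-1}\mbf{n}'$, and we have $\mc{B}^{\pm}_{\mbf{m}}=\bigoplus_{k}\mc{B}^{\pm}_{\mbf{m},\pm k\bm{\alpha}}$.

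\textbf{Step 2: structure of $\mc{B}_{\mbf{m}}$.} By Step 2 of Theorem \ref{generic-factorization-for-quantum-loop-group:theorem} (reducedness of roots), $\mc{B}_{\mbf{m}}$ is a rank-one algebra of type $U_q(\mf{sl}_2)$. It is generated by a single $e_{\mbf{m}}\in\mc{B}^+_{\mbf{m},\bm{\alpha}}$ and $f_{-\mbf{m}}\in\mc{B}^-_{\mbf{m},-\bm{\alpha}}$, where $\bm{\alpha}$ is a positive root and the degree is $l_{\mbf{m}}$-fold in the sense of the exponent appearing in the statement, together with the Cartan element $h^{\pm}_{\mbf{m}}=\prod_i (h^{\pm}_{i,0})^{\alpha_i}$. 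Next we normalise the pairing, $\langle e_{\mbf{m}},f_{-\mbf{m}}\rangle=\gamma_{\mbf{m}}^{-1}$, and compute $\langle e_{\mbf{m}}^k,f_{-\mbf{m}}^k\rangle$ from the bialgebra property and $\Delta_{\mbf{m}}$ (formula \eqref{slope-m-coproduct-slope-subalgebra}). This should give $[k]_q!\,\gamma_{\mbf{m}}^{-k}$ with $q$ replaced by the appropriate $q_{\bm{\alpha}}$. Consequently
\begin{align*}
R^+_{\mbf{m}}=\sum_{k\geq0}\frac{\gamma_{\mbf{m}}^k}{[k]_q!}e_{\mbf{m}}^k\otimes f_{-\mbf{m}}^k .
\end{align*}

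\textbf{Step 3: solving ABRR and computing $\mbf{B}_{\mbf{m}}(z)$.} With the ansatz $J^+_{\mbf{m}}(z)=\sum_k c_k(z)\,e_{\mbf{m}}^k\otimes f_{-\mbf{m}}^k$, the recursion in the proof of Proposition \ref{ABRR-equation} becomes scalar. The factor $z_{(1)}^{-1}q^{\Omega}$ conjugates $e^k\otimes f^k$ by $z^{-k\bm{\alpha}}$ times a $q$-power coming from $h_{\mbf{m}}^+\otimes h_{\mbf{m}}^-$, where the $q$-power is computed as in the $\Omega$-calculation in the cocycle proposition. The recursion then yields a product $\prod_{i=1}^k(1-z^{-l_{\mbf{m}}}(h^+_{\mbf{m}}\otimes h^-_{\mbf{m}})q^{2il_{\mbf{m}}})^{-1}$, exactly as in the $U_q(L\mf{sl}_2)$ case. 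We then apply $1\otimes S_{\mbf{m}}$, using $S(f)=-fh^{-1}$-type formulas, and multiply. Finally, passing to $\mbf{J}$ via $z\mapsto zp^{\mbf{m}}$ (Proposition \ref{wall-KZ:proposition}) produces the factor $p^{l_{\mbf{m}}\cdot\mbf{m}}$, which gives the stated formula.

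\textbf{Main obstacle.} The main obstacle is Step 2: identifying $\mc{B}_{\mbf{m}}$ with rank-one $U_q(\mf{sl}_2)$, including its coproduct $\Delta_{\mbf{m}}$ and the pairing constants, inside the shuffle realisation. Here we rely on Theorem \ref{generic-factorization-for-quantum-loop-group:theorem} and on the Cartan–Weyl/Khoroshkin–Tolstoy root vectors. We would first verify primitivity, namely $\Delta_{\mbf{m}}(e_{\mbf{m}})=e_{\mbf{m}}\otimes1+h_{\mbf{m}}\otimes e_{\mbf{m}}$, directly from \eqref{slope-m-coproduct-slope-subalgebra}. This holds because the intermediate terms $\mbf{0}<\mbf{k}<\bm{\alpha}$ vanish, since $\mbf{m}\cdot\mbf{k}\notin\mbb{Z}$ for a generic slope.
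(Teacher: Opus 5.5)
Your proposal is correct and follows the paper's own argument. Choose a generic path: the paper simply appeals to Theorem~\ref{generic-factorization-for-quantum-loop-group:theorem}, while you make the perturbation explicit via Theorem~\ref{path-independence-theorem} and Proposition~\ref{finite-product-result:proposition}. Then observe that every slope subalgebra met along the path is of $U_q(\mf{sl}_2)$ type, and reduce to the rank-one computation of Subsection~\ref{ssub:dynamical_monodromy_operator_for_the_u_q}.

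Your extra details only fill in what the paper leaves implicit: the primitivity of $e_{\mbf{m}}$ from the vanishing of the intermediate terms of $\Delta_{\mbf{m}}$, and the explicit ABRR recursion. Two small points to fix. First, $\mbf{B}_{\mbf{m}}(z)$ is built from $(\mbf{J}^+_{\mbf{m}})^{-1}$, so the recursion should be run on the inverted ABRR equation rather than on $J^+_{\mbf{m}}$ itself. Second, the endpoint $s\in[s,s-\mc{L})$ cannot be perturbed, so it must itself be generic, which the paper also tacitly assumes.
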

\begin{proof}
By Theorem \ref{generic-factorization-for-quantum-loop-group:theorem}, one can choose a generic path from $s$ to $s-\mc{L}$ in the definition of the quantum difference operator $\mbf{M}_{\mc{L}}^s(z)$ in \eqref{defn-qde-equation}. In the expression:
\begin{align*}
\mbf{M}_{\mc{L}}^s(z)=\mc{L}\prod^{\rightarrow}_{s\in[s,s-\mc{L})}\mbf{B}_{\mbf{m}}(z)
\end{align*}
each dynamical monodromy operator $\mbf{B}_{\mbf{m}}(z)$ comes from the generic slope subalgebra $\mc{B}_{\mbf{m}}$, which is isomorphic to $U_{q}(\mf{sl}_2)$. Now the computation is reduced to the computation given in \ref{ssub:dynamical_monodromy_operator_for_the_u_q}.
\end{proof}

\textbf{Remark. }The above construction recovers the construction given in \cite{EV02} that the dynamical Weyl group for the quantum loop group of finite type symmetrisable Cartan matrices are of the $U_{q}(\mf{sl}_2)$-type.

\subsection{Drinfeld double of the preprojective $K$-theoretic Hall algebras}
In the case of the Drinfeld double of the preprojective $K$-theoretic Hall algebra. We set $Q=(I,E)$ to be an arbitrary quiver, and we consider the coefficients field as:
\begin{align*}
\mbb{K}:=\mbb{Q}(q,t_e)_{e\in E}
\end{align*}
and the rational function set $\{\zeta_{ij}(x)\}_{i,j\in I}$ is chosen as:
\begin{align*}
\zeta_{ij}(x)=(\frac{1-q^{-1}x}{1-x})^{\delta^i_j}\prod_{e=ij\in E}(1-t_ex)\prod_{e=ji}(1-\frac{q}{t_ex})
\end{align*}
and we denote the corresponding quantum loop group as $\mc{A}_{Q}$. By \cite{N22}, we can also have the corresponding slope subalgebras $\mc{B}_{\mbf{m}}$ with $\mbf{m}\in\mbb{R}^I$.

\subsubsection{Algebraic quantum difference equation for the quantum toroidal algebra}
For the algebra $\mc{A}_{Q}$ such that $Q$ is of the affine type $A$. In this case the set of rational function $\{\zeta_{ij}(x)\}_{i,j\in I}$ is written as:
\begin{align}
\zeta_{ij}(x)=(\frac{1-q^{-1}x}{1-x})^{\delta^i_j}(1-tx)^{\delta_{i+1,j}}(1-qt^{-1}x^{-1})^{\delta_{i-1,j}}.
\end{align}

It has been known that in this case $\mc{A}_{Q}$ is the quantum toroidal algebra $U_{q,t}(\hat{\hat{\mf{sl}}}_n)$. For the detail of the construction of the algebraic quantum difference equation one can refer to \cite{Z23}\cite{Z24}\cite{Z24-2}.

It has been shown in \cite{N15} that the slope subalgebra $\mc{B}_{\mbf{m}}$ for the quantum toroidal algebra $U_{q,t}(\hat{\hat{\mf{sl}}}_n)$ is isomorphic to the tensor product of the quantum affine algebra $\bigotimes_{k=1}^{l_{\mbf{m}}}U_{q}(\hat{\mf{gl}}_{n_k})$, and the choice of $n_k$ is dependent on $\mbf{m}\in\mbb{R}^n$ as stated in \cite{N15}.

The generic formula for the algebraic quantum difference operator $\mbf{M}^s_{\mc{L}}(z)$ was formulated in \cite{Z24} that the dynamical monodromy operator $\mbf{B}_{\mbf{m}}(z)$ is either of the $U_{q}(\mf{sl}_2)$-type given in \eqref{sl2-monodromy-operator}, or of the quantum Heisenberg $U_{q}(\hat{\mf{gl}}_1)$-type as stated in Theorem 6.3 in \cite{Z24}. In detail, the formula for the generic $\mbf{B}_{\mbf{m}}(z)$ can be written as follows:
\begin{equation}\label{generic-dynamical-monodromy-operator}
\mbf{B}_{\mbf{m}}(z)=
\begin{cases}
\sum_{n=0}^{\infty}\frac{(q-q^{-1})^n}{[n]_{q^2}!}\frac{(-1)^n}{\prod_{\nu=1}^{n}(1-z^{-\mbf{v}_{\gamma}}p^{-\mbf{m}\cdot\mbf{v}_{\gamma}}q^{\nu\mbf{v}_{\gamma}\cdot\bm{\gamma}})}e_{\gamma}^nf_{\gamma}'^n& U_{q}(\mf{sl}_2)\text{-type}\\
\mbf{m}\left(\prod_{h=1}^g \left(\exp\left(-\sum_{k=1}^\infty \frac{n_k q^{-\frac{k|\bm{\delta}_h|}{2}}}{1 - z^{-k|\bm{\delta}_h|} p^{k\mbf{m} \cdot \delta_h} q^{-\frac{k|\delta_h|}{2}}}\right) \alpha_{k}^{\mbf{m},h} \otimes \alpha_{-k}^{\mbf{m},h}\right)\right)& U_{q}(\hat{\mf{gl}}_1)\text{-type}.
\end{cases}
\end{equation}

As for the special case when $n=1$, the algebra $\mc{A}_{Q}$ is isomorphic to $U_{q,t}(\hat{\hat{\mf{gl}}}_1)$. In this case each slope subalgebra $\mc{B}_{\mu}$ with $\mu\in\mbb{Q}$ is isomorphic to the quantum Heisenberg algebra. In this case each dynamical monodromy operator $\mbf{B}_{\mu}(z)$ is written as the $U_{q}(\hat{\mf{gl}}_1)$-type as in \eqref{generic-dynamical-monodromy-operator}.

\subsubsection{Relation to the quantum difference operator}
Let us focus on the simple module $V$ that is given by the equivariant $K$-theory of Nakajima quiver varieties $\bigoplus_{\mbf{v}\in\mbb{N}^I}K_{T_{\mbf{w}}}(\mf{M}_{Q}(\mbf{v},\mbf{w}))$. For the detailed notation and further reference on the construction, one can refer to \cite{N23}\cite{Nak01}.

For the general case, it has been constructed in \cite{OS22} that the quantum difference equation was originally from the geometric quantum loop group $U_{q}^{MO}(\hat{\mf{g}}_{Q})$ constructed via the $K$-theoretic stable envelopes. In this case the geometric quantum difference operator $\mbf{M}_{\mc{L}}(z)$ can be written as:
\begin{align}
\mbf{M}_{\mc{L}}(z)=\text{Const}\cdot\mc{L}\prod^{\rightarrow}_{w\in[s,s-\mc{L})}\mbf{B}_{w}(z)
\end{align}
and here $\mbf{B}_{w}(z)$ is the dynamical monodromy operator constructed in the same manner by the wall subalgebra $U_{q}(\mf{g}_{w})$. For the definition of the wall subalgebra one can refer to \cite{OS22}\cite{Z25}.

Using the identification of the $R$-matrix for the slope subalgebras and the wall $R$-matrix of the wall subalgebras in Proposition 6.8 in \cite{Z25}. We can identify the algebraic quantum differnece equation for the $\mc{A}_{Q}$ with the geometric quantum difference equation given in \cite{OS22}.  In brief, we have the following result:
\begin{thm}
For the equivariant $K$-theory of the Nakajima quiver variety $K_{T_{\mbf{w}}}(\mf{M}_{Q}(\mbf{v},\mbf{w}))$, the geometric quantum difference operator coincides with the algebraic quantum difference operator up to a constant operator.
\end{thm}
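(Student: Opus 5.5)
The plan is to reduce the comparison to a slope-by-slope identification of the building blocks on both sides, and then use the path independence of Theorem \ref{path-independence-theorem} to line up the two products.

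\textbf{Step 1: transport the algebra.} By \cite{Z25}, $\mc{A}_Q\cong U_q^{MO}(\hat{\mf{g}}_Q)$. Under this isomorphism each slope subalgebra $\mc{B}_{\mbf{m}}$ is matched with the Okounkov--Smirnov wall subalgebra $U_q(\mf{g}_w)$, where the wall $w$ is the one containing $\mbf{m}$. Proposition 6.8 of \cite{Z25} identifies the reduced universal $R$-matrix $R_{\mbf{m}}^{\pm}$ with the wall $R$-matrix $R_w^{\pm}$. I will first check that this is an identification of Hopf algebras, compatible with $\Delta_{\mbf{m}}$, with the pairing, and with the Cartan part $q^{\Omega}$. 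The antipode $S_{\mbf{m}}$ and the multiplication $\mbf{m}$ used in \eqref{defn-of-monodromy} will then agree with the geometric ones.

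On $K_{T_{\mbf{w}}}(\mf{M}_Q(\mbf{v},\mbf{w}))$ I also need the normalisation of the Cartan data to match. The operator $\kappa$ in \eqref{kappa-linear-functional} should correspond to the shift by $\hbar$ times the weight used in \cite{OS22}, namely $\mbf{w}-C\mbf{v}$ up to the factor $\hbar/2$. The Kähler variables and the operator $z$ must be identified as well, possibly up to a sign or a shift $z\mapsto z(-q^{1/2})^{\cdots}$. Any such discrepancy in normalisation will be absorbed into the statement ``up to a constant operator''.

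\textbf{Step 2: compare the building blocks slope by slope.} With Step 1 in place, the ABRR equation \eqref{ABRR-eqn} and its geometric counterpart become literally the same equation. By the uniqueness in Proposition \ref{ABRR-equation}, $J_{\mbf{m}}^{\pm}(z)=J_w^{\pm}(z)$ as operators on each weight space. Applying $\mbf{m}\circ(1\otimes S_{\mbf{m}})$ and the shift $z\mapsto zq^{\kappa}$ then gives $\mbf{B}_{\mbf{m}}(z)=\mbf{B}_w(z)$.

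\textbf{Step 3: compare the line bundle factors.} The line bundle operator $\mc{L}$ of Lemma \ref{line-bundle-operator:lemma} is only fixed by Schur's lemma up to a scalar on each simple summand. The geometric tensor by a tautological line bundle intertwines the same shift automorphism $\tau_{\mc{L}}$ on $U_q^{MO}$. Since the module is simple, the two therefore differ by a scalar. This step is also where the constant $\text{Const}$ in \cite{OS22} enters.

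\textbf{Step 4: assemble the products.} Both products run over the same set of walls in $[s,s-\mc{L})$. By Proposition \ref{finite-product-result:proposition} only finitely many factors act nontrivially on $V_{\mbf{n}}$. By Theorem \ref{path-independence-theorem} the choice of path on the algebraic side is irrelevant, so I may choose the straight segment used geometrically. One must also check that the ordering convention $\rightarrow$ agrees, and that the factor $\tau_{\mc{L}}$ in \eqref{defn-of-quantum-lattice-operator} together with $\mc{L}$ reproduces $\mc{L}\prod\mbf{B}_w$. This follows from the translation formula Proposition \ref{translation-formula}.

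\textbf{Main obstacle.} I expect Step 1 to be the hard part: showing that the isomorphism of \cite{Z25} intertwines not only the $R$-matrices but also the Cartan factor $q^{\Omega}$ and the dynamical shift $\kappa$. Geometry fixes $q^{\Omega}$ through the polarization and the weight $\mbf{w}-C\mbf{v}$, while here it comes from the matrix $\Pi$ with $\gamma_{ij}=e^{\hbar\pi_{ij}}$. My first attempt would be to compute $\gamma_{\mbf{e}_i,\mbf{e}_j}$ directly from the given $\zeta_{ij}$ and compare it with the symmetrized Cartan form $\hbar C$. Any leftover discrepancy should be an overall scalar or diagonal factor, which is then absorbed into the constant operator.
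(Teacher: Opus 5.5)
Your proposal follows essentially the same route as the paper, which simply invokes the isomorphism $\mc{A}_Q\cong U_q^{MO}(\hat{\mf{g}}_Q)$ and Proposition 6.8 of \cite{Z25} identifying the slope $R$-matrices $R^{\pm}_{\mbf{m}}$ with the wall $R$-matrices, and then observes that both quantum difference operators are built from these $R$-matrices in the same way; your Steps 1--4 (ABRR uniqueness, Schur's lemma for $\mc{L}$, path independence) make that reduction explicit. One correction: a mismatch of $q^{\Omega}$ or of $\kappa$, or a rescaling of the Kahler variables, cannot be absorbed into a constant operator, because these enter the denominators of the ABRR recursion, so those normalisations must genuinely agree (the paper tacitly takes this from \cite{Z25}); also, the identity $\tau_{\mc{L}}(X)\,\mc{L}=\mc{L}\,X$ follows from the intertwining property in Lemma \ref{line-bundle-operator:lemma}, not from Proposition \ref{translation-formula}.
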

This generalises the result of the identification in the affine type $A$ case in \cite{Z24-1}\cite{Z24-2}.

\end{document}